\newcommand{\ds}{\displaystyle}
\newtheorem{theorem}{Theorem}[section]
\newtheorem{lemma}{Lemma}[section]
\newtheorem{proposition}{Proposition}[section]
\newtheorem{corollary}{Corollary}[section]
\theoremstyle{definition}
\newtheorem{definition}{Definition}[section]
\newtheorem{remark}{Remark}[section]
\numberwithin{equation}{section}
\newtheorem{example}{Example}[section]
\DeclareMathOperator{\vol}{vol}
\DeclareMathOperator{\loc}{loc}
\DeclareMathOperator{\dd}{d}
\DeclareMathOperator{\supp}{supp}
\DeclareMathOperator{\spa}{span}
\DeclareMathOperator{\m}{\mathfrak{m}}
\DeclareMathOperator{\dm}{d\mathfrak{m}}
\DeclareMathOperator{\B}{\mathsf{B}}
\DeclareMathOperator{\FMMM}{\mathsf{FMMM}}
\newcommand{\Lip}{\mathsf{Lip}}
\newcommand{\dil}{\mathsf{dil}}
\newcommand{\Div}{{\text{div}}}
\author{Alexandru Krist\'aly}
\address{Department of Economics\\
	Babe\c s-Bolyai University\\
	400591 Cluj-Napoca, Romania \&  Institute of Applied Mathematics\\
 \'Obuda University\\
 1034 Budapest, Hungary}
  \email{alex.kristaly@econ.ubbcluj.ro; kristaly.alexandru@nik.uni-obuda.hu}
\author{Benling Li}
\address{
School of Mathematics and Statistics\\
Ningbo University\\
315211 Ningbo, China}
\email{libenling@nbu.edu.cn}
\author{Wei Zhao}
\address{
School of Mathematics\\
East China University of Science and Technology\\
200237 Shanghai, China}
\email{szhao\underline{ }wei@yahoo.com}
\keywords{Finsler manifold; $S$-curvature; flag curvature; Ricci curvature; reversibility; functional inequalities}
\thanks{A. Krist\'aly is  supported by the
	Excellence Researcher Program \'OE-KP-2-2022 of \'Obuda University, Hungary.  B. Li is supported by Natural Science Foundation of Zhejiang Province (No. LY23A010012) and Natural Science Foundation of Ningbo (No. 2024J017).  W. Zhao
is supported by Natural Science Foundation of China (No. 12471045) and Science and Technology Project of Xinjiang Production and Construction Corps (No. 2023CB008-12). }
\subjclass[2010]{Primary 26D10, Secondary  53C60, 53C23}
\begin{document}

\title[]{Failure of famous functional inequalities on Finsler manifolds: the influence of $S$-curvature}
\vspace{0.5cm}
\begin{abstract}
The validity of functional inequalities on Finsler metric measure manifolds is based on three non-Riemannian quantities, namely, the \textit{reversibility}, \textit{ flag curvature} and $S$-\textit{curvature} induced by the measure. Under mild assumptions on the reversibility  and flag curvature, it turned out that famous functional inequalities  --  as Hardy  inequality, Heisenberg--Pauli--Weyl uncertainty principle and  Caffarelli--Kohn--Nirenberg inequality -- usually hold on forward complete Finsler manifolds with \textit{non-positive} $S$-curvature, cf.\ Huang, Krist\'aly and Zhao [\textit{Trans.\ Amer.\ Math.\ Soc.}, 2020]. In this paper however we prove that  -- under similar  assumptions    on the reversibility and flag curvature as before -- the aforementioned functional inequalities \textit{fail} whenever the $S$-curvature is \textit{positive}. Accordingly, our results clearly reveal the deep dependence of functional inequalities on the $S$-curvature.
As a  consequence of these results, we establish   analytic aspects of Finsler manifolds, e.g.,  if the flag curvature is non-positive, the Ricci curvature is bounded from below and  the $S$-curvature is positive, then the reversibility turns out to be infinite. Further topological properties and examples are presented on  general Funk metric spaces, where the $S$-curvature plays again a decisive role.

% The validity of functional inequalities on Finsler metric measure manifolds is based on three non-Riemannian quantities, namely, the reversibility constant,  flag curvature and $S$-curvature induced by the measure. Under mild assumptions on the reversibility constant and flag curvature, it turned out that famous functional inequalities  --  as Hardy  inequality, Heisenberg--Pauli--Weyl uncertainty principle and  Caffarelli--Kohn--Nirenberg inequality -- usually hold on forward complete Finsler manifolds with \textit{non-positive} $S$-curvature, cf. Huang, Krist\'aly and Zhao [\textit{Trans. Amer. Math. Soc.}, 2020]. In this paper we prove the \textit{optimality} of the above results in the sense that under similar  assumptions  as before   on the reversibility constant and flag curvature, the \textit{positivity} of the $S$-curvature implies the \textit{failure} of all the aforementioned functional inequalities. As an unusual  consequence of these results, we establish analytic aspects of  general Finsler manifolds, e.g., if the flag curvature is non-positive, the Ricci curvature is bounded from below and  the $S$-curvature is positive, then the reversibility constant turns out to be infinite. Further analytic/topological properties and examples are presented
% on projectively flat Finsler manifolds and general Funk metrics, where the $S$-curvature plays again the decisive role.

\end{abstract}
\maketitle

%\dedicatory{...}

\vspace{1cm}
\section{Introduction}
As the title of S. S. Chern's famous 1996 paper \cite{Ch1} shows, his credo  was that `Finsler geometry is just Riemannian geometry without the
quadratic restriction'. In addition, Chern concluded his paper by claiming that  ``(...) \textit{almost all the results
of Riemannian geometry can be developed in the
Finsler setting. It is remarkable that one needs
only some conceptual adjustment, no essential
new ideas being necessary}."
 To some extent, Chern's position  cannot be discredited, since most of the well-known results
from Riemannian geometry -- by suitable adaptations -- have their natural Finslerian counterparts,
e.g.,\ Hopf--Rinow, Cartan--Hadamard, Gauss--Bonnet and Bonnet--Myers theorems as well as Rauch,
Bishop--Gromov and Bishop--G\"unther comparison principles; see  Auslander \cite{Auslander}, Bao--Chern--Shen \cite{BCS}, Rund \cite{Rund}, Shen \cite{ShenLecture} and Ohta \cite{Ohtabook}. However, the development of Finsler geometry over the last three decades clearly shows that fundamentally new ideas are  needed to solve deep problems, revealing in this way genuine differences between Riemannian and Finsler geometries. In support of this thesis, we recall some open questions and spectacular results from Finsler geometry, the solutions to which are far from being tied to the Riemannian world:
\begin{itemize}
	\item \textit{Classification of Finsler manifolds with constant flag curvature.} Unlike the Hopf classification in Riemannian geometry, no full characterization is available
	for Finsler manifolds having constant flag curvature; in fact,  it seems that different subclasses of Finsler  manifolds -- as Minkowski, Berwald, Landsberg, projectively flat and Randers spaces --   play a crucial role in such a characterization, see
	e.g., Bao--Robles--Shen \cite{BRS}, Li \cite{Li}, and
	Shen \cite{Sh0, Shen-Canadian, Sh1}.

%{\color{blue} (Li suggested that the sentence could be modified as ``-- as Minkowski, Berwald, Landsberg, {\color{blue} projectively flat}, Randers spaces --   play a crucial role in such a characterization, see
%	e.g. Bao, Li, Robles and Shen \cite{BRS, Li, Sh0, Shen-Canadian, Sh1}.)}

%	\item \textit{Area-minimizing problems.}  Unlike in inner spaces, affine 2-disks in normed Minkowski spaces are not
%	area-minimizing among rational chains having the same boundary, see Burago and Ivanov \cite{BI}.
	\item {\it Busemann NPC spaces.} Busemann non-positively curved (shortly, NPC) spaces on Riemannian manifolds are characterized by the non-positivity of the sectional curvature, see Busemann \cite{Busemann} and Jost \cite{Jost}. It was shown however by Kelly--Straus \cite{Kelly-St} that any strictly convex domain $D\subset \mathbb R^2$ endowed with the Hilbert metric (which is a projectively flat Finsler manifold with constant negative curvature) is a Busemann NPC space if and only if the boundary of $D$ is an ellipse and the Hilbert geometry is hyperbolic.  Ivanov--Lytchak \cite{Ivanov-Lytchak} proved that  a Finsler manifold is a Busemann NPC space if and only if it is a Berwald manifold of non-positive flag curvature.

	\item {\it Sobolev spaces over Finsler manifolds.} Contrary to the Riemannian setting, the Sobolev spaces over noncompact, irreversible Finsler manifolds should not have
	even a vector space structure, see Krist\'aly--Rudas \cite{KR}. This construction was carried out over the Funk metric defined on the $n$-dimensional Euclidean unit ball $\mathbb B^n$.
\end{itemize}

In addition to the above list, the present paper provides  another new aspect of non-Rieman\-nian Finsler manifolds: we  show that famous functional inequalities -- which naturally hold on Riemannian manifolds --  \textit{fail} on a large class of Finsler manifolds.

Let $(M,F,\m)$ be a  Finsler metric measure manifold ($\FMMM$, for short),
i.e., a Finsler manifold $(M,F)$ endowed with a smooth positive measure $\m$, see Section \ref{prelimain} for details.  Huang--Krist\'aly--Zhao \cite{HKZ} already pointed out that  three non-Riemannian quantities  strongly influence the validity of functional inequalities on  $(M,F,\m)$,  namely,
\begin{itemize}
	\item \textit{reversibility};
	
		\item \textit{flag curvature};
		
		\item $S$-\textit{curvature}, induced by the measure $\m$.

\end{itemize}
The {\it reversibility} of a Finsler manifold $(M,F)$, introduced by Rademacher \cite{Rade}, is defined as
\[
\lambda_F(M):=\sup_{x\in M}\lambda_F(x),\quad \text{ where }\quad \lambda_F(x):=\sup_{y\in T_{x}M\backslash\{0\}}\frac{F(x,-y)}{F(x,y)}.
\]
It is easy to see that $\lambda_F(M)\geq 1$ with equality if and only if $F$ is {\it reversible} (i.e., symmetric). Obviously,
all Riemannian metrics are reversible, but there are infinitely many irreversible Finsler metrics. Typical examples with infinite reversibility are Funk metric spaces,   which served to prove that Sobolev spaces over such Finsler manifolds are not linear structures, see Farkas--Krist\'aly--Varga \cite{FKV} and Krist\'aly--Rudas \cite{KR}. Even if the Finsler metric is explicitly given, it is usually a  hard task to determine whether the reversibility is finite or not.

For each  $y\in  T_xM$ and any plane $  \Pi=\text{span}\{y,v\}\subset T_xM$, the
\emph{flag curvature}  is defined by
\begin{eqnarray}
	\nonumber
	\mathbf{K} (y;\Pi):=\mathbf{K}(y;v):=\frac{g_{y}(R_y(v),v)}{F^2(x,y)\,g_{y}(v,v)-g^2_{y}(v,y)^2},
\end{eqnarray}
where $R_y$ is the Riemannian curvature of $F.$ Clearly, the flag curvature naturally defines the Ricci curvature ${\bf Ric}.$
As expected, on any Riemannian manifold the flag curvature reduces to the sectional curvature.

Unlike in the Riemannian case, there is no canonical measure on  Finsler manifolds.
Hence, various measures can be introduced whose behavior
may be genuinely different. Every smooth positive measure $\m$ on a Finsler manifold $(M,F)$ induces two non-Riemannian quantities, which are the so-called {\it distortion} $\tau$ and {\it $S$-curvature} $\mathbf{S}$, respectively, see \cite{ShenLecture}. More precisely, set $\dm:=\sigma(x){\dd}x^1\wedge\cdots\wedge {\dd}x^n$ in some local coordinate system $(x^i)$. For any $y\in T_xM\backslash\{0\}$, define
\[
\tau(x,y):= \ln \frac{\sqrt{\det g_{ij}(x,y)}}{\sigma(x)}, \qquad \mathbf{S}(x,y):=\left.\frac{\dd}{{\dd}t}\right|_{t=0}\tau(\gamma_y(t), {\gamma}'_y(t)),
\]
where $g_y=(g_{ij}(x,y))$ is the fundamental tensor of $F$ and $t\mapsto \gamma_y(t)$ is the geodesic starting from $x$ with initial speed $\gamma'_y(0)=y$. Since ${\bf S}$ is 1-positive homogeneous, we say that $\mathbf{S}\geq a$ for some constant $a\in \mathbb{R}$ if
$
\mathbf{S}(x,y)\geq a F(x,y)$ for all $ (x,y)\in TM\backslash\{0\}.
$

The distortion and $S$-curvature  are inherently present in Riemannian metric measure manifolds, albeit manifested in different forms.
Indeed, if $(M,g,e^{-f}{\dd}{\vol_g})$ is a weighted Riemannian metric measure manifold, where   $f\in C^\infty(M)$ and ${\dd}{\vol_g}$ is the canonical measure, then a straightforward computation shows that for any $(x,y)\in TM$,
\begin{equation}\label{RiedisS}
\tau(x,y)=f(x),\quad \mathbf{S}(x,y)=g_x(y,\nabla f(x)).
\end{equation}
Clearly, the $S$-curvature does not vanish unless $f$ is a constant.
In the Riemannian setting the $S$-curvature  cannot preserve a constant sign unless it vanishes identically, whereas this can be achieved by Finslerian metrics; for more details, see the example of Funk metric space below.

Before stating our main results,  we recall the main achievements from \cite{HKZ}, which represent the starting points of our present study; in fact, they roughly state that if
\begin{equation}\label{ksnegativ}
	{\bf K}\leq 0 \ \ {\rm and}\ \  {\bf S}\leq 0,
\end{equation}
 then the well-known functional inequalities hold with sharp (Euclidean) constants, including the Hardy  inequality, Heisenberg--Pauli--Weyl uncertainty principle and  Caffarelli--Kohn--Nirenberg inequality, respectively.

 To be more explicit, let $(M,F,\m)$ be an $n$-dimensional  Cartan--Hadamard $\FMMM$ (i.e., simply connected, forward complete $\FMMM$  with ${\bf K}\leq 0$) and ${\bf S}\leq 0$. If $r(x)=d_F(o,x)$ is the distance function from a point $o\in M$, and $F^*$ stands for the co-metric of $F$,   then the following  functional inequalities hold (for their purely Riemannian versions, see Krist\'aly \cite{K2}):
\begin{itemize}
	\item {\it Hardy inequality}; see \cite[Theorem 1.4 for $p=2$]{HKZ} and \cite{ZhaoHardy}. If $p\in (1,n),$ then
\begin{equation}\label{Hardy-1}
	{\ds\int_M \max \left\{ {F}^{*p}({\pm\rm d}u) \right\}\dm}\geq \left(\frac{n-p}{p}\right)^p{\ds\int_M \frac{|u|^p}{r^p}\dm},\ \forall u\in C^\infty_0(M);
\end{equation}
	moreover,  if $F$ is reversible, then the constant $\left(\frac{n-p}{p}\right)^p$
	is sharp and never achieved.
	\item \textit{Heisenberg--Pauli--Weyl  uncertainty principle}; see \cite[Theorem 1.1]{HKZ}. If $n\geq 2,$ then
	\begin{equation}\label{HPW-1}
	\ \ \  \ \ 	{\ds\left(\int_M \max \left\{ {F}^{*2}({\pm\rm d}u) \right\}\dm\right)} \left(\int_M {r^2u^2}\dm\right)\geq \frac{n^2}{4}{\ds\left(\int_M {u^2}\dm\right)^2},\ \forall u\in C^\infty_0(M);
	\end{equation}
	 in addition,  if $F$ is reversible, then  $\frac{n^2}{4}$ is sharp and is achieved by a non-trivial function if and only if  $\m$ is the Busemann--Hausdorff measure $\m_{BH}$, ${\bf K}=0$  and ${\bf S}={\bf S}_{BH}=0$.
	
	\item \textit{Caffarelli--Kohn--Nirenberg interpolation inequality}; see \cite[Theorem 1.1]{HKZ}. If $0<q<2<p$ and $2<n<2(p-q)/(p-2)$, then for every $u\in C^\infty_0(M)$ one has
	\begin{equation}\label{CKN-1}
	{\ds\left(\int_M \max \left\{ {F}^{*2}({\pm\rm d}u) \right\} \dm\right)} \left(\int_M \frac{|u|^{2p-2}}{r^{2q-2}}\dm\right)\geq \frac{(n-q)^2}{p^2}{\ds\left(\int_M \frac{|u|^p}{r^q}\dm\right)^2};
	\end{equation} furthermore,  if $F$ is reversible, then  $\frac{(n-q)^2}{p^2}$ is sharp and is achieved as in the case \eqref{HPW-1}.
\end{itemize}
On the other hand, if $(M,F,\m)$ is an $n$-dimensional reversible  $\FMMM$, and the  ``complementary" case to \eqref{ksnegativ} holds, i.e., $${\bf Ric}\geq 0\ \ {\rm   and}\ \   {\bf S}\geq 0,$$
 together with either the Heisenberg--Pauli--Weyl  uncertainty principle \eqref{HPW-1} or the Caffarelli--Kohn--Nirenberg interpolation inequality \eqref{CKN-1}, then we have the  rigidity   that $\m$ is the Buse\-mann--Hausdorff measure $\m_{BH}$ (up to a multiplicative constant), ${\bf K}=0$  and ${\bf S}={\bf S}_{BH}=0$, see \cite[Theorem 1.3]{HKZ}.

 The above  set of results contains in certain sense the genesis of our first main result  and roughly shows that the \textit{validity/failure} of functional inequalities on Finsler manifolds is decided by the \textit{sign} of the $S$-curvature.

\begin{theorem}\label{mainThm1}
	Let $n\geq 2$ be an integer,  $(M,F,\m)$ be  an $n$-dimensional forward complete $\FMMM$, and $r=d_F(o,\cdot)$ for some $o\in M$. Suppose that
\begin{equation}\label{ric-s-condition}
	\mathbf{Ric}\geq -(n-1) k^2, \quad  \mathbf{S} \geq (n-1){h},
\end{equation}
	where  $k,h$ are two constants with $h> k\geq 0$.
	Then  the following statements hold$:$
	\begin{enumerate}[\rm (i)]

		\item\label{mainThmxx}  the first eigenvalue of the $p$-Laplacian vanishes on $(M,F,\m)$ for every $p>1$, i.e.,
		\[
		\inf_{u\in C^\infty_0(M)\backslash\{0\}}\frac{\ds\int_M {F}^{*p}({\rm d} u) \dm}{\ds\int_M {|u|^p}\dm}=0;
		\]

		\item\label{mainThmxxx} the $L^p$-Hardy inequality  fails on $(M,F,\m)$ for every $p>1$, i.e.,
		\[
		\inf_{u\in C^\infty_0(M)\backslash\{0\}}\frac{\ds\int_M {F}^{*p}({\rm d} u) \dm}{\ds\int_M \frac{|u|^p}{r^p}\dm}=0;
		\]
		
		\item\label{mainThm4} the Heisenberg--Pauli--Weyl principle fails on $(M,F,\m)$, i.e.,
		\[
		\inf_{u\in C^\infty_0({M})\backslash\{0\}}\frac{\left( \ds\int_{{M}} F^{*2}({\rm d} u)\dm \right)\left(  \ds\int_{{M}} r^2u^2 \dm \right) }{\left( \ds\int_{{M}} u^2 \dm  \right)^2}=0;
		\]
		
		\item\label{mainThm5} the Caffarelli--Kohn--Nirenberg interpolation inequality fails on $(M,F,\m)$, i.e., if  $0<q<2<p$ and $2<n<2(p-q)/(p-2)$,
		then
		\[
		\inf_{u\in C^\infty_0({M})\backslash\{0\}}\frac{\left(\ds\int_{{M}} F^{*2}({\rm d} u) \dm   \right)\left(\ds\int_{{M}}  \frac{|u|^{2p-2}}{r^{2q-2}}\dm   \right)  }{\left(\ds\int_{{M}}  \frac{|u|^{p}}{r^{q}}\dm   \right)^2}=0.
		\]
	\end{enumerate}
\end{theorem}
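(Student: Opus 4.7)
The engine of the argument is a Finslerian volume comparison under the combined hypotheses: whenever $\mathbf{Ric} \geq -(n-1)k^2$ and $\mathbf{S} \geq (n-1)h$ with $h > k \geq 0$, the area $A(r)$ of the forward distance sphere $\partial B^+(o,r)$ must decay exponentially. To see this, I would integrate the Riccati-type inequality for the weighted mean curvature along minimizing geodesics from $o$: the Ricci lower bound handles the ``Riemannian part'' via the usual $\sinh^{n-1}(kr)/k^{n-1}$ comparison, while the additional term $-\mathbf{S}(\nabla r) \leq -(n-1)h$ contributes an extra negative factor, yielding the pointwise bound
\begin{equation*}
\hat\sigma(r,\xi) \leq \frac{\sinh^{n-1}(kr)}{k^{n-1}}\, e^{-(n-1)h r}
\end{equation*}
on the volume density $\hat\sigma$ in polar coordinates (with the natural interpretation $\sinh(kr)/k = r$ when $k=0$). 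Integrating over the indicatrix $S_oM$ and using $\sinh(kr) \sim \tfrac{1}{2}e^{kr}$ for large $r$ produces
\begin{equation*}
A(r) \leq C\, e^{-(n-1)(h-k) r}, \qquad r \geq 1;
\end{equation*}
in particular $\m(M) < \infty$.

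Given this exponential decay, all four vanishing statements reduce to a single cutoff construction. Fix a smooth $\phi:[0,\infty)\to[0,1]$ with $\phi \equiv 1$ on $[0,1/2]$, $\phi \equiv 0$ on $[1,\infty)$ and $\supp \phi' \subset [1/2,1]$, and set $u_R(x) := \phi(r(x)/R)$; since $\phi'$ vanishes near $0$, $u_R \in C^\infty_0(M)$. Using $F^*(\dd r) = 1$ almost everywhere and the coarea formula,
\begin{equation*}
\int_M F^{*p}(\dd u_R)\,\dm \leq \frac{\|\phi'\|_\infty^p}{R^p} \int_{R/2}^R A(r)\,\dd r \leq \frac{C}{R^p}\, A(R/2),
\end{equation*}
which decays exponentially in $R$. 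For (i): $\int_M |u_R|^p\,\dm \geq \m(B^+(o,R/2)) \to \m(M) > 0$, so the Rayleigh quotient vanishes. For (iii): multiplying the numerator by $\int_M r^2 u_R^2\,\dm \leq R^2 \m(M)$ absorbs the $1/R^2$ and leaves $C A(R/2)\m(M) \to 0$, while $(\int_M u_R^2\,\dm)^2 \to \m(M)^2 > 0$. For (ii) and (iv): the weighted integrands $|u_R|^p/r^p$, $|u_R|^{2p-2}/r^{2q-2}$, and $|u_R|^p/r^q$ are integrable against $A(r)\,\dd r$ in the prescribed exponent ranges (using $A(r)\sim r^{n-1}$ near $0$ together with the exponential decay at infinity), so the denominators stay bounded below by fixed positive constants while the numerators still decay; in borderline subcases (e.g.\ (ii) with $p \geq n$) the denominator is already infinite for every admissible $u_R$, and the Rayleigh quotient is zero by inspection.

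The decisive obstacle is the volume comparison itself, a genuinely Finslerian ingredient: one must propagate the Ricci lower bound along radial geodesics through a Riccati inequality \emph{while} simultaneously absorbing the additive contribution of $\mathbf{S}$ to the weighted mean curvature---something with no Riemannian counterpart, since on Riemannian manifolds with the canonical measure $\mathbf{S}$ vanishes identically. Once this comparison delivers the exponential decay of $A(r)$, the four failures follow uniformly from the cutoff estimates above, with only cosmetic adjustments between the cases.
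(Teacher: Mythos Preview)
Your approach is correct and genuinely different from the paper's. Both hinge on the same volume comparison $\hat\sigma_o(r,y)\le e^{-\tau(y)}\,e^{-(n-1)hr}\,\mathfrak{s}_{-k^2}^{n-1}(r)$ (the paper's key estimate (2.21)), which under $h>k$ forces $A(r)\le Ce^{-(n-1)(h-k)r}$ and hence $\m(M)<\infty$. From there the arguments diverge: the paper plugs in the Gaussian bubble $u_\alpha=-e^{-\alpha r}$ for (i)--(iii) and the Talentian profile $u_\alpha=-[1+(\alpha r)^{2-q}]^{1/(2-p)}$ for (iv), sending $\alpha\to 0^+$; you use a single cutoff $u_R=\phi(r/R)$ and send $R\to\infty$. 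Your route is more uniform --- one test family covers all four items --- and the estimates are arguably cleaner since you never need to compute $\int_0^\infty e^{-p\alpha r}e^{-(n-1)hr}\mathfrak{s}_{-k^2}^{n-1}(r)\,r^s\,\dd r$ explicitly. The paper's bubbles, on the other hand, extract slightly more: its proof of (i) (Theorem~4.1) only needs $h\ge k\ge 0$, whereas your argument for (i) genuinely uses $\m(M)<\infty$ and hence $h>k$; since Theorem~1.1 assumes $h>k$ anyway, this costs nothing here.

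One technical point you glossed over: the assertion ``since $\phi'$ vanishes near $0$, $u_R\in C^\infty_0(M)$'' handles the non-smoothness of $r$ at $o$, but not on the cut locus of $o$. In general $u_R$ is only in $\Lip_0(M)$. The paper deals with exactly this via an explicit approximation (Proposition~3.4) showing that every $u\in\Lip_0(M)$ is a $W^{1,p}_\m$-limit of functions in $C^\infty_0(M)$ with uniformly bounded support and dilatation, so the Rayleigh quotients pass to the limit; the same step patches your argument without difficulty. A smaller cosmetic issue: your bound $\int_{R/2}^R A(r)\,\dd r\le C\,A(R/2)$ is not literally true without monotonicity, but what you actually need and use is $\int_{R/2}^R A(r)\,\dd r\le C e^{-(n-1)(h-k)R/2}$, which follows directly from the exponential upper bound on $A$.
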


As intuitively described by Shen \cite{Shen-adv-math}, the $S$-curvature  describes the change of measure along geodesics. As far as we know, only a few results are available
in the literature concerning the impact of $S$-curvature on  global properties of  Finsler manifolds. In the light of the results from \cite{HKZ}, Theorem \ref{mainThm1} provides a concluding answer about the validity of functional inequalities in Finsler manifolds, which is captured by the behavior of the $S$-curvature.
  In particular, all the statements  except \eqref{mainThmxx} in
Theorem \ref{mainThm1} will be invalid if the condition $h>0$ is relaxed to $h\geq 0$, showing in this way the optimality of our results.
Indeed, every reversible Minkowski space endowed with the Busemann--Hausdorff measure is a  Cartan--Hadamard manifold with $\mathbf{K}=0$ and $\mathbf{S}=0$ (i.e., $h=k=0$); then the (sharp) inequalities from \eqref{Hardy-1}-\eqref{CKN-1} all are valid, which contradict the conclusions from  Theorem \ref{mainThm1}.
 We also notice that there are large classes of non-Riemannian reversible Minkowski spaces; see Example \ref{reversminkspaces}.

% Accordingly, Theorem \ref{mainThm1} together with the results listed above from \cite{HKZ} provide a clear picture about the decisive influence of the $S$-curvature to the validity/failure of famous functional inequalities.

%
% the positivity  has a   on the geometric and analytic aspects of (irreversible) Finsler manifolds, which makes the difference between Riemannian geometry and Finsler geometry.

The proof of Theorem \ref{mainThm1} proceeds by employing polar coordinates on $M$ with a suitable disintegration of the measure $\m$.
 Combined with the key assumption \eqref{ric-s-condition} with  $h> k\geq 0$, comparison principles and suitably chosen test functions (via appropriate approximation arguments), this setup yields the results. For the proof of (i)-(iii) the Gaussian bubble $u_\alpha=-e^{-\alpha r}$ is used, while for (iv) a Talentian bubble is considered.

 The Gaussian bubble has been used in \cite{HKZ} and Krist\'aly \cite{Kris} to prove (i) and (ii), respectively, in the special case of the Funk metric on the Euclidean unit ball $\mathbb B^n$. More generally, for every non-empty bounded strongly convex domain $\Omega \subset \mathbb{R}^n$ (containing the origin $\mathbf{0}$) with smooth boundary, there exists a Finsler metric $F$, which is a  {\it Funk metric}, and  $(\Omega,F)$ is a Cartan--Hadamard manifold with $\mathbf{K}=-1/4$; see  for example \cite{Li,Sh1,ShenSpray}.
 Such a pair $(\Omega,F)$ is called a {\it Funk metric space} and serves as a model space for the application of Theorem \ref{mainThm1}.  In the specific case $\Omega=\mathbb{B}^n$ and $\m=\m_{BH}$, the  Funk metric  is given explicitly by
\begin{equation}\label{specialfunk}
 F(x,y)
 =\frac{\sqrt{ |y |^2-|x|^2|y|^2+\langle x,y \rangle^2} +\langle x,y\rangle}{1-|x|^2},
 \quad x \in \mathbb{B}^n,\ y \in T_x\mathbb{B}^n=\mathbb{R}^n,
\end{equation}
 as in Shen \cite{Sh1,ShenSpray,ShenLecture}. For this metric, one has ${\bf Ric}=-(n-1)/4$ and ${\bf S}_{BH}=(n+1)/2$,  and hence $h=\frac{n+1}{2(n-1)}>\frac{1}{2}=k>0$.
By contrast, the symmetrized metric
$
\tilde{F} (x,y)= ( F(x,y) + F(x, -y))/2,
$
is precisely the Riemannian Hilbert/Klein metric on $\mathbb{B}^n$ with ${\bf K}=-1$ (so ${\bf Ric}=-(n-1)$) and ${\bf S}=0$.
  In this reversible setting, the sharp inequalities \eqref{Hardy-1}--\eqref{CKN-1} hold, and Theorem~\ref{mainThm1} does not apply.

In the sequel, we investigate the influence of $S$-curvature on  analytic properties of Finsler manifolds; more precisely, it turns out that the positivity of $S$-curvature has a strong implication on the
reversibility.

\begin{theorem}\label{reversibinfite}
Let $(M,F,\m)$ be  an $n$-dimensional forward complete noncompact $\FMMM$. Assume that
\begin{equation}\label{second-cond}
\mathbf{K}\leq 0,\quad \mathbf{Ric}\geq -(n-1) k^2, \quad  \mathbf{S} \geq (n-1){h},	
\end{equation}
where  $k,h$ are two constants satisfying $h>0$ and $h\geq k\geq 0$. Then $\lambda_F(M)=+\infty$.
\end{theorem}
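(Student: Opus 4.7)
The plan is to argue by contradiction: assume $\lambda_F(M)=\lambda<\infty$ and produce two contradictory bounds on the Hardy infimum on $(M,F,\m)$ by playing the reverse metric against Theorem~\ref{mainThm1}.

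First, I would analyse the reverse Finsler metric $\bF(x,y):=F(x,-y)$. Its geodesics satisfy $\gamma^{\bF}_y(t)=\gamma^F_{-y}(-t)$, whence $d_{\bF}(o,x)=d_F(x,o)\leq\lambda\,r(x)$ and $\lambda_{\bF}(M)=\lambda$; moreover, finite reversibility identifies forward and backward completeness, so $(M,\bF)$ is forward complete as well. Since $g^{\bF}_{ij}(x,y)=g^F_{ij}(x,-y)$, the distortion obeys $\tau_{\bF}(x,y)=\tau_F(x,-y)$, and unwinding the definition of $\mathbf{S}$ along the reversed geodesic gives
\[
\mathbf{S}_{\bF}(x,y)=-\mathbf{S}_F(x,-y)\leq -(n-1)h\,\bF(x,y).
\]
Analogously, $\mathbf{K}_{\bF}(y;\Pi)=\mathbf{K}_F(-y;\Pi)\leq 0$ and $\mathbf{Ric}_{\bF}(y)=\mathbf{Ric}_F(-y)\geq -(n-1)k^2$. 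Thus $(M,\bF,\m)$ is a Cartan--Hadamard $\FMMM$ (after passing to the universal cover if necessary) with strictly negative $S$-curvature.

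Second, I would invoke the HKZ Hardy inequality \eqref{Hardy-1} on $(M,\bF,\m)$: for every $p\in(1,n)$ and $u\in C_0^\infty(M)$,
\[
\int_M\max\{\bF^{*p}(\pm\dd u)\}\,\dm\geq\Big(\tfrac{n-p}{p}\Big)^p\int_M\frac{|u|^p}{\bar r^p}\,\dm,
\]
where $\bar r(x)=d_{\bF}(o,x)$. The Legendre duality $\bF^*(\xi)=F^*(-\xi)$ gives $\max\{\bF^{*p}(\pm\dd u)\}=\max\{F^{*p}(\pm\dd u)\}\leq\lambda^p F^{*p}(\dd u)$, while $\bar r\leq\lambda r$ yields $\bar r^{-p}\geq\lambda^{-p}r^{-p}$. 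Combining these, the Hardy infimum on $(M,F,\m)$ is bounded below by the positive constant $(n-p)^p/(p^p\lambda^{2p})$.

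Third, this strictly positive lower bound contradicts part (ii) of Theorem~\ref{mainThm1}, which in the main case $h>k\geq 0$ asserts that the same infimum equals $0$. For the borderline subcase $h=k>0$ not literally covered by Theorem~\ref{mainThm1}, the argument must be refined, either by sharpening Theorem~\ref{mainThm1} under the extra assumption $\mathbf{K}\leq 0$, or by running its Gaussian-bubble argument $u_\alpha=e^{-\alpha r}$ directly and exploiting the Cartan--Hadamard volume comparison to push the ratio to $0$ also when $h=k$. The main obstacles are (i) rigorously verifying the reversal laws for $\tau$ and $\mathbf{S}$ asserted above, and (ii) bridging the gap between the strict inequality $h>k$ required by Theorem~\ref{mainThm1} and the non-strict inequality $h\geq k$ permitted here.
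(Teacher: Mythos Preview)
Your strategy is essentially the paper's own: obtain a positive lower bound for the Hardy infimum via a Hardy inequality ``on the reverse side,'' then contradict the vanishing of that infimum coming from the $S$-curvature hypothesis. Your reversal identities $\mathbf{S}_{\bF}(x,y)=-\mathbf{S}_F(x,-y)$, $\mathbf{K}_{\bF}(y;\Pi)=\mathbf{K}_F(-y;\Pi)$, $\bF^*(\xi)=F^*(-\xi)$, and $d_{\bF}(o,x)=d_F(x,o)$ are correct, and the resulting inequality with constant $(n-p)^p/(p\lambda)^{2p}$ is exactly what the paper derives.

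Two points where the paper is cleaner than your write-up. First, the borderline $h=k>0$ is \emph{not} a genuine gap: the Hardy failure (Theorem~\ref{ThmHardyFails1} and Corollary~\ref{failhardcor1}) is actually proved under $h\geq k\geq 0$ with $h>0$, so the contradiction goes through without any refinement; it is only the umbrella Theorem~\ref{mainThm1} that is stated with $h>k$. Second, instead of passing to the reverse metric and invoking the Cartan--Hadamard Hardy inequality \eqref{Hardy-1}, the paper quotes Zhao's Hardy inequality (Theorem~\ref{hardytimethem}), which is formulated with the backward distance $\overleftarrow{r}(x)=d_F(x,o)$ and requires only $\mathbf{K}\leq 0$, $\mathbf{S}\geq 0$ on a forward complete noncompact $\FMMM$ --- no simple connectedness. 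This avoids your ``pass to the universal cover if necessary,'' which would be delicate since $u\in C_0^\infty(M)$ does not lift to a compactly supported function upstairs. Modulo these two cosmetic issues, your argument is the paper's proof.
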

Obviously, every Funk metric space endowed with the Busemann--Hausdorff measure verifies the assumptions of
Theorem \ref{reversibinfite}. The additional condition $\mathbf{K}\leq 0$ is mild. In fact, if the flag curvature is bounded from below by a positive number, then the Bonnet--Myers theorem  implies the compactness of the manifold, see  \cite[Theorem 7.7.1]{BCS}, in which case the reversibility is finite. In addition, the assumptions on both the flag and Ricci curvatures can be removed in the case of projectively flat Finsler manifolds, see Theorem \ref{Spositivenoncom} below.

Before stating the next result, we recall that another important notion in Finsler geometry is the \textit{weighted Ricci curvature} $\mathbf{Ric}_N$ for $N\in [n,\infty]$,  whose lower bound $\mathbf{Ric}_N\geq K$ for some $K\in \mathbb R$ is equivalent to the validity of the  CD$(K,N)$ condition in the Finsler setting (see Ohta \cite{Ohtabook,OS,Ohta-pacific}).
 Although $\mathbf{Ric}_N$ incorporates both $\mathbf{Ric}$ and $\mathbf{S}$,
 it cannot replace the separate conditions in \eqref{ric-s-condition} or \eqref{second-cond}.
  Indeed, any reversible Minkowski space with the Busemann--Hausdorff measure satisfies
$\mathbf{Ric}_N=0$; however, on such a space,  neither Theorem \ref{mainThm1} nor Theorem \ref{reversibinfite} holds (see Subsection \ref{Counterexamples}).
Furthermore,
assuming $\mathbf{Ric}_N\geq K>0$  is also unsuitable, as this implies the manifold is compact and hence has finite reversibility.

The $S$-curvature also  governs the topology of projectively flat Finsler manifolds.
A Finsler metric $F$ on  a domain $\Omega\subset \mathbb{R}^n$ is   {\it projectively flat} if the images of geodesics are straight lines.
The characterization of such metrics on domains in $\mathbb{R}^n$  constitutes the regular case of Hilbert's fourth problem.
Although Beltrami's theorem ensures that every projectively flat Riemannian metric is of constant sectional curvature and vice versa, this is not true for non-Riemannian Finsler metrics, cf. \cite{ChengLi, Li, Sh1}; this fact motivates the study of such Finsler manifolds.
The following theorem reveals that
 a positive lower bound of $S$-curvature influences not only the
 boundedness and completeness of the domain but also the reversibility of a projectively flat Finsler manifold.
\begin{theorem}\label{Spositivenoncom}
Let  $(\Omega,F,\mathscr{L}^n)$ be an $n$-dimensional forward complete   projectively flat Finsler manifold  endowed with the Lebesgue measure.
Assume that    $\mathbf{S} \geq (n-1)h $ for some constant $h>0$.
Then the following statements hold$:$
\begin{enumerate}[\rm (i)]
\item  $\Omega$ is a bounded domain  in $\mathbb{R}^n;$

\item  $(\Omega,F)$ is  not backward complete$;$

\item $\lambda_F(\Omega)=+\infty$.

\end{enumerate}
\end{theorem}
Note that some statements remain valid under a weaker condition $\mathbf{S}>0$; see subsection \ref{projceflatmanifold} for details. We also remark that by using $\mathbf{S}\leq -(n-1)h$ instead of $\mathbf{S}\geq (n-1)h$,
Theorems \ref{mainThm1}--\ref{Spositivenoncom} can be established
 for backward
complete Finsler manifolds.

The paper is organized as follows. Section \ref{prelimain} is devoted to preliminaries on Finsler
geometry together with some results about volume comparison. In Section \ref{Sobolevsapce}, we provide some immediate properties of Sobolev spaces over irreversible Finsler manifolds. Here, we also establish an approximation result which will be useful in our main results.  The proof of our main Theorem \ref{mainThm1} is given in Section \ref{influeofS1}, while the proofs of Theorems \ref{reversibinfite} and \ref{Spositivenoncom} are presented in Section \ref{infactlue2}.  Section \ref{sectionFunk} is devoted to the study of Funk metric spaces, by providing examples and counterexamples supporting the sharpness of our main results.

\section{Preliminaries}\label{prelimain}
\subsection{Elements from Finsler geometry} In this section we recall some definitions
and properties from Finsler geometry; for details see e.g.\ Bao--Chern--Shen
\cite{BCS} and Shen \cite{ShenSpray,ShenLecture}.
\subsubsection{Finsler manifolds}
 A {\it Minkowski norm}  on an $n$($\geq 2$)-dimensional vector space $V$ is a $C^{\infty}$-function  $\phi : V \setminus \{ 0 \} \rightarrow [0, +\infty)$ satisfying:
\begin{enumerate}[{\rm (i)}]
\item $\phi (y) \geq 0$ with equality if and only if $y=0$;
\item $\phi$ is positively homogeneous of degree one, i.e., $\phi(\alpha y) = \alpha \phi(y)$ for all $\alpha > 0$;
\item $\phi(y)$ is strongly convex, i.e.,  the matrix $g_{ij}(y) :=   \frac{1}{2} \frac{\partial^2\phi^2}{\partial y^i\partial y^j}  (y)$ is positive definite;
\end{enumerate}
having these properties, the pair $(V,\phi)$ is called a {\it Minkowski space}.

Let $M$ be an $n$-dimensional (connected) smooth manifold and $TM = \bigcup_{x\in M} T_xM$ be its tangent bundle.
 A {\it Finsler metric} $F = F(x,y)$ on  $M$ is a $C^{\infty}$ function on $TM \setminus \{ 0 \}$ such
 that $F|_{T_{x}M}$ is a Minkowski  norm on $T_{x}M$ for each $x\in M$, in which case the pair ($M$, $F$) is called a {\it Finsler manifold}.

The {\it fundamental tensor} is defined by
\begin{align}\label{defbasictensor}
g_y := \big(g_{ij}(x,y)\big) = \left(\frac{1}{2}\frac{\partial^2 F^2}{\partial y^i \partial y^j}(x,y)\right),\quad \forall\,(x,y)\in TM\backslash\{0\},
\end{align}
which induces a Riemannian metric on $T_xM\backslash\{0\}$.
The Finsler metric $F$ is (induced by)   a    Riemannian metric if and only if $g_y$ is independent of $y$, i.e., $g_{ij}(x,y) = g_{ij}(x)$.
In the same way, $F$ is a {\it   Minkowski metric} if   $g_y$ is independent of $x$, i.e., $g_{ij}(x,y) = g_{ij}(y)$.

The {\it reversibility} of a set $U\subset M$, following Rademacher \cite{Rade},  is defined by
\begin{equation}\label{def_reversibility}
\lambda_F(U) := \sup_{x \in U} \lambda_{F}(x), \quad \text{where} \quad \lambda_{F}(x) = \sup_{y \in T_xM \setminus \{ 0 \}} \frac{F(x, -y)}{F(x,y)}.
\end{equation}
Observe that $\lambda_F(M) \geq 1,$ with equality if and only if $F$  is {\it reversible}, i.e., $F (x, y) = F(x, -y)$ for all $(x,y)\in TM$. Moreover, $x\mapsto \lambda_F(x)$ is continuous on $M$, and hence
 $\lambda_F(M) $ is finite whenever $M$ is compact or $F$ is a Minkowski metric. %Of
%course, there also exist a lot of noncompact irreversible Finsler manifolds with finite $\lambda_F(M)$.

The {\it Legendre transformation} $\mathfrak{L} : TM \rightarrow T^*M$
  is defined by
\[
\mathfrak{L}(x,y):=
\begin{cases}
g_y(y,\cdot), &\text{ if }y\in T_xM\backslash\{0\},\\
0, &\text{ if }y= 0\in T_xM,
\end{cases}
\]
where $g_y(y,\cdot):=g_{ij}(x,y)y^i{\dd}x^j$. Given a smooth function $u: M \rightarrow \mathbb{R}$,
its {\it gradient} $\nabla u(x)$ is defined as
\[
\nabla u(x) = \mathfrak{L}^{-1} ( {\dd} u).
\]
 This leads to the  identity $\langle X, {\dd}u\rangle=X(u)=g_{\nabla u}(\nabla u,X)$ valid for any smooth vector field $X$. Here $\langle y,\xi\rangle:=\xi(y)$
 denotes the canonical pairing between $T_xM$
and $T^*_xM$.

%Usually $ \nabla $ is nonlinear, i.e., $\nabla(u_1+u_2)\neq \nabla u_1+\nabla u_2$.

The  {\it co-metric} $F^{*}$ of $F$ on $M$ is defined by
\begin{equation}\label{dualmetric}
F^{*}(x,\xi):=\sup_{y \in T_xM \setminus \{0\}} \frac{\xi(y)}{F(x,y)}, \quad \forall \xi \in T^{*}_xM,
\end{equation}
which is a Finsler metric on the cotangent bundle $T^{*}M$. It is easy to see that
\begin{equation}\label{dualff*}
\langle y,\xi\rangle\leq F(x,y)F^*(x,\xi), \quad  \text{ for any }y\in T_xM,\ \xi\in T^*_xM,
\end{equation}
with equality if and only if $\xi=\alpha \mathfrak{L}(x,y)$ for some $\alpha>0$. In addition, $F(x,y)=F^*(\mathfrak{L}(x,y))$ for any $y\in T_xM$. Furthermore, the reversibilities of $F$ and $F^*$ coincide, see Huang--Krist\'aly--Zhao \cite[Lemma 2.1]{HKZ},  i.e.,
\begin{equation}\label{revercondin}
\lambda_F(x)=\lambda_{F^*}(x), \quad \forall\,x\in M.
\end{equation}

A smooth curve $t\mapsto\gamma(t)$ in $(M, F)$ is called a {\it geodesic} if it satisfies
\begin{equation}\label{geodesequ}
\frac{{\dd}^2 \gamma^i}{{\dd}t^2} + 2 G^i \left(\gamma, \frac{{\dd} \gamma}{{\dd}t}\right) = 0,
\end{equation}
where $ G^i = G^i(x,y)$ are the {\it geodesic coefficients }(or  {\it spray coefficients})
given by
\begin{equation}\label{goedcooff}
G^i = \frac{1}{4} g^{ij} \left\{ \frac{\partial^2 F^2}{\partial x^l  \partial y^j} y^l -\frac{\partial F^2}{\partial x^j} \right\},
\end{equation}
 $(g^{ij})$ being the inverse matrix of $(g_{ij})$.
In the sequel, we always use $\gamma_y(t)$, $t\geq 0,$ to denote the geodesic with initial velocity $y$.

 For each  $y\in  T_xM$ and any plane $  \Pi=\text{span}\{y,v\}\subset T_xM$, the
\emph{flag curvature}  is defined by
\begin{eqnarray}
\nonumber
\mathbf{K} (y;\Pi):=\mathbf{K}(y;v):=\frac{g_{im}(x,y)\,R^i_{\ k}(x,y)\, v^kv^m}{F^2(x,y)\,g_{ij}(x,y)\,v^iv^j-[g_{ij}(x,y)\,y^iv^j]^2},
\end{eqnarray}
where
\begin{eqnarray}
\nonumber R^i_{\ k}=2\frac{\partial G^i}{\partial
x^k}-y^j\frac{\partial^2G^i}{\partial x^j\partial
y^k}+2G^j\frac{\partial^2G^i}{\partial y^j\partial
y^k}-\frac{\partial G^i}{\partial y^j}\frac{\partial G^j}{\partial
y^k}.
\end{eqnarray}
If $F$ is a  Riemannian metric, the flag curvature reduces
 to the sectional curvature. %Finsler metric $F$ is said to be of {\it
%constant flag curvature} if ${\bf K}\equiv \kappa $ is a constant, in which  case
%\[ R^i_{\ k}= \kappa \left\{F^2\delta^i_k-F \frac{\partial F}{\partial y^k}y^i \right\}.\]
%The {\it Ricci curvature} at $y \in T_xM\backslash\{ 0\}$ is defined by
%\[
%\mathbf{Ric}(x,y):=\frac{R^m_{\ m}(x,y)}{F^2(x,y)}.
%\]
%It is easy to see, when $F$ is of constant flag curvature $\kappa$, the Ricci curvature is given by
%\[
%\mathbf{Ric}(x,y) = (n-1)\kappa.
%\]

The {\it Ricci curvature} of $y\in T_xM\backslash\{0\}$ is defined by
\[
\mathbf{Ric}(y):=\frac{1}{F^2(x,y)}\sum_{i}\mathbf{K}(y;e_i),
\]
where $\{e_1,\ldots,e_n\}$ is a $g_y$-orthonormal basis for $T_xM$.

Given a locally Lipschitz curve $c : [0, 1] \rightarrow M$, its {\it length} is defined by
 \[
 L_{F}(c) = \int_0^1 F(c(t), c'(t)) {\dd}t.
 \]
The {\it distance function} $d_{F} : M \times M \rightarrow [0, \infty )$  is defined  as
$d_{F}(x_1, x_2):= \inf L_F(c),$
where the infimum is taken over all locally Lipschitz curves
$c:[0,1] \rightarrow M$ with $c(0) =x_1$ and $c(1)=x_2$.
For any $x_1,x_2,x_3\in M$, it satisfies $d_F(x_1,x_2)\geq 0,$ with equality if and only if $x_1=x_2$, and $d_F(x_1,x_2)\leq d_F(x_1,x_3)+d_F(x_3,x_2)$.
Usually $d_F(x_1, x_2) \neq d_F(x_2, x_1),$ unless $F$ is reversible. In fact, one has for every  $x_1,x_2\in M$  that
\begin{equation}\label{non-symmetric}
	\frac{d_F (x_1, x_2)}{d_F(x_2, x_1)} \leq \lambda_F(M).
\end{equation}
Given $R > 0$, the {\it forward} and {\it backward metric balls} $B^+_x(R)$ and $B^-_x(R)$ are defined by
 \[
 B^+_x(R):=\{ z\in M\,:\, d_F(x,z)<R\},\quad B^-_x(R):=\{ z\in M\,:\, d_F(z,x)<R\}.
 \]
If $F$ is reversible, forward and backward metric balls coincide which are denoted
by $B_x(R)$.

A Finsler manifold $(M, F)$ is called {\it forward complete} if every geodesic $t\mapsto \gamma(t)$, $t\in [0,1)$, can be
extended to a geodesic defined on $t\in [0,+\infty)$; similarly, $(M, F)$ is called {\it backward complete} if every geodesic $t\mapsto \gamma(t)$, $t\in (0,1]$, can be extended to a geodesic on $ t\in (-\infty,1]$. We say that $(M, F)$ is   {\it complete} if it is both forward
 and backward complete. According to \cite{BCS}, if $(M,F)$ is either forward or backward complete, then for every two points $x_1,x_2\in M$, there exists a minimal geodesic $\gamma$ from $x_1$ to $x_2$ such that $d_F(x_1,x_2)=L_F(\gamma)$. Moreover, the closure of a forward (resp., backward) metric ball with finite radius is compact if $(M,F)$ is forward (resp., backward) complete.

The {\it reverse Finsler metric} $\overleftarrow{F}$ is defined as $\overleftarrow{F}(x,y):=F(x,-y)$. It is easy to check that $(M,F)$ is forward (resp., backward) complete   if and only if $(M,\overleftarrow{F})$ is backward (resp., forward) complete. For this reason, we focus in the sequel only on  forward complete Finsler manifolds.

\subsection{Measure and polar coordinate system} In this subsection, let $(M,F,\m)$ be a forward complete {\it Finsler metric measure manifold} ($\FMMM$, for short), i.e., a forward complete Finsler manifold $(M,F)$ endowed with a smooth positive measure $\m$.
In a local coordinate system ($x^i$), $\dm$ can be expressed as
\begin{equation}\label{measure_m}
\dm = \sigma {\dd}x^1 \wedge \cdot\cdot\cdot \wedge {\dd}x^n,
\end{equation}
where $\sigma= \sigma(x)$ denotes the {\it density function} of $\dm$. In particular, the {\it Busemann--Hausdorff measure}
$\dm_{BH}$ is defined by
\[
\dm_{BH}:=\frac{\vol(\mathbb{B}^n)}{\vol(B_xM)}{\dd}x^1 \wedge \cdot\cdot\cdot \wedge {\dd}x^n=\frac{\omega_n}{\vol(B_xM)}{\dd}x^1 \wedge \cdot\cdot\cdot \wedge {\dd}x^n,
\]
where $\mathbb{B}^n$ is the  $n$-dimensional  Euclidean
unit ball, $\omega_n:=\vol(\mathbb{B}^n)$ and $B_xM:=\{y\in T_xM\,:\, F(x,y)<1\}$.

The {\it distortion} $\tau$ and the {\it S-curvature} $\mathbf{S}$ of $(M,F,\m)$ are defined  by
\begin{equation}\label{distsdef}
\tau(x,y):= \ln \frac{\sqrt{\det g_{ij}(x,y)}}{\sigma(x)}, \qquad \mathbf{S}(x,y):=\left.\frac{\dd}{{\dd}t}\right|_{t=0}\tau(\gamma_y(t), {\gamma}'_y(t)), \quad \forall\, y\in T_xM \setminus \{0\},
\end{equation}
respectively, where  $t\mapsto \gamma_y(t)$ is a geodesic with $\gamma'_y(0)=y$. According to Shen \cite{ShenLecture}, we also have the explicit form
\begin{equation}\label{Scurvature}
\mathbf{S}(x,y) = \frac{\partial G^m}{\partial y^m}(x,y) - y^k \frac{\partial}{\partial x^k}(\ln \sigma(x)),
\end{equation}
where $G^m$ are the geodesic coefficients.
Throughout this paper, we say that $\mathbf{S}\geq a$ for some constant $a\in \mathbb{R}$ if
\[
\mathbf{S}(x,y)\geq a F(x,y),\quad \forall\,(x,y)\in TM\backslash\{0\}.
\]

	%Let $(M,g)$ be a Riemannian  manifold. Given an arbitrary smooth positive measure $\m$ on $M$, there always exists  some $f\in C^\infty(M)$ such that $\m=e^{-f}\vol_g$. Then for any $x\in M$ and any $y\in T_xM\backslash\{0\}$,
	%\begin{equation}\label{RiedisS}
	%	\tau(x,y)=f(x),\qquad \mathbf{S}(x,y)=g(y,\nabla f|_x),
	%\end{equation}
%see Zhao \cite{ZhaoHardy}.
%According to \eqref{RiedisS},  $S$-curvature in the weighted Riemannian setting cannot be of constant sign, unless it vanishes; similar situation occurs on a  reversible Finsler manifold as well, see  Zhao \cite[(2.10)]{ZhaoHardy}.
%However, on irreversible Finsler manifolds, the $S$-curvature can be of constant sign, i.e., either positive or negative. Such phenomenon happens on every Funk metric space endowed with the Busemann--Hausdorff measure, when  the $S$-curvature    is a positive constant and it depends only on the dimension of the manifold, see  Shen \cite{ShenSpray}.

Given a point $o\in M$, set $S_oM:=\{ y \in T_oM: F(o,y) =1 \}$. The {\it cut value} $i_y$ of a direction $y \in S_o M$  and the {\it injectivity } $\mathfrak{i}_o$ at $o$ are defined by
\begin{align*}
	i_y := \sup \{ t>0: \text{the geodesic }  \gamma_y|_{[0,t]}   \text{ is globally minimizing} \}, \quad
	\mathfrak{i}_o  := \inf_{y \in S_oM} i_y>0.
\end{align*}
%It follows by Bao et al. \cite{BCS}  that $i_x > 0$ for any point $x \in M$ if $(M, F)$ is either forward or backward complete.
Moreover, if $(M,F)$ is a Cartan--Hadamard manifold (i.e., a simply connected,  forward complete Finsler manifold with $\mathbf{K}\leq 0$), then $\mathfrak{i}_x=+\infty$ for every $x\in M$.

 Given a point $o\in M$, one can define the {\it polar coordinate system} $(r,y)$ around $o$, where $ r(x):=d_F(o,x)$ and $y\in S_oM$;
 see Zhao--Shen \cite[Section 3]{ZS} for example. In particular,  for any $y\in S_oM$ and $r\in (0,i_y)$,
\begin{equation}\label{geommeaingofr}
\gamma_y(r)=(r,y),\quad \gamma'_y(r)=\nabla r|_{(r,y)}.
\end{equation}
Moreover,
it follows by \cite[Lemma 3.2.3]{ShenLecture} that the eikonal equations hold, i.e.,  $F(\nabla r) = F^{*}({\dd}r) =1$ for  $\m$-a.e. in $M$.

In the polar coordinate system $(r,y)$, the measure $\m$  decomposes as
\begin{equation}\label{volumeexprsso}
\dm|_{(r,y)}=:\hat{\sigma}_o(r,y){\dd}r \wedge {\dd}\nu_o(y),
\end{equation}
where ${\dd}\nu_o(y)$ is the Riemannian volume form of $S_oM$ induced by $g_y$. Furthermore, it follows from  \cite[Lemma 3.1]{ZS} that
\begin{equation}\label{volumerzero}
\lim_{r\rightarrow 0^+}\frac{\hat{\sigma}_o(r,y)}{r^{n-1}}=e^{-\tau(y)}.
\end{equation}

%Denote by $\mathfrak{i}_o$  the injectivity radius of $o$, i.e., $\mathfrak{i}_o:=\inf_{y\in S_oM}i_y$. Note that if $(M,F)$ is a forward complete Cartan-Hadamard manifold (i.e., simply connected and $\mathbf{K}\leq 0$), then $i_o=+\infty$ for every $o\in M$.

%set $\pi/\sqrt{k}:=+\infty$ if $k\leq 0$
The following notations are useful in the sequel: $\pi/\sqrt{k}:=+\infty$ if $k\leq 0$,
\[
 \mathfrak{s}_k(t):=\left\{
	\begin{array}{lll}
		\frac{\sin(\sqrt{k}t)}{\sqrt{k}}, && \text{ if }k>0,\\
		\ \ \ \ t, && \text{ if }k=0,\\
		\frac{\sinh(\sqrt{-k }t)}{\sqrt{-k}}, && \text{ if }k<0,
	\end{array}
	\right. \qquad
	\mathfrak{c}_k(t):=\frac{\dd}{{\dd}t}\mathfrak{s}_k(t)=\left\{
	\begin{array}{lll}
		{\cos(\sqrt{k}t)}, && \text{ if }k>0,\\
		\ \ \ \ \  1, && \text{ if }k=0,\\
		{\cosh(\sqrt{-k}t)}, && \text{ if }k<0.
	\end{array}
	\right.
\]
In view of \eqref{volumeexprsso}, we recall a volume comparison result, see Zhao--Shen \cite[Theorem 3.4]{ZS}.
\begin{theorem} {\rm (Zhao--Shen \cite{ZS})}\label{bascivolurcompar} Let $(M,F,\m)$ be an $n$-dimensional forward complete $\FMMM$ with $\mathbf{Ric}\geq (n-1) k$. For every $o\in M$, there holds
\begin{equation}\label{densesim}
\hat{\sigma}_o(r,y)\leq e^{-\tau({\gamma}_y'(r))}  \mathfrak{s}_k^{n-1}(r),\quad r\in (0,i_y), \quad \forall\,y\in S_oM,
\end{equation}
where $(r,y)$ is the polar coordinate system at $o$.
Moreover,  for any $R\in (0, \pi/\sqrt{k})$,
\[
\m[B^+_o(R)]\leq   \int_{S_oM} \left(\int^R_0 e^{-\tau({\gamma}_y'(t))}  \mathfrak{s}_k^{n-1}(r){\dd}r  \right) {\dd}\nu_o(y),
\]
 with equality for some $R_0\in(0, \pi/\sqrt{k})$ if and only if for any $y\in S_oM$,
\[
\mathbf{K}(\gamma_y'(r);\cdot)\equiv k\quad \text{ for }0\leq r\leq R_0\leq  \mathfrak{i}_o,
\]
in which case
\[
\hat{\sigma}_o(r,y)=e^{-\tau({\gamma}_y'(r))}  \mathfrak{s}_k^{n-1}(r),\quad r\in (0,R_0).
\]
\end{theorem}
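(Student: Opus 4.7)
The plan is to disintegrate $\hat{\sigma}_o$ into a distortion factor and a Jacobi-field determinant, control the latter via a Riccati--Sturm comparison driven by the Ricci lower bound, and then integrate. Fix $y \in S_oM$, extend $e_1 := y$ to a $g_y$-orthonormal basis $\{e_1,\ldots,e_n\}$ of $T_oM$, and let $J_i$, $i = 2,\ldots,n$, be the Jacobi fields along $\gamma_y$ with $J_i(0) = 0$ and $J_i'(0) = e_i$. Set $\mathcal{J}(r,y) := \sqrt{\det\bigl(g_{\gamma_y'(r)}(J_i,J_j)\bigr)_{i,j=2}^{n}}$. From the identity $\sigma(x) = e^{-\tau(x,v)} \sqrt{\det g_{ij}(x,v)}$ (valid for every $v \neq 0$ by \eqref{distsdef}) applied at $x = \gamma_y(r)$ with $v = \gamma_y'(r)$, a standard polar change of coordinates produces the key identity
\[
\hat{\sigma}_o(r,y) = e^{-\tau(\gamma_y'(r))}\,\mathcal{J}(r,y),
\]
which is consistent with \eqref{volumerzero} since $\mathcal{J}(r,y)/r^{n-1} \to 1$ as $r \to 0^+$.

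Representing $(J_2,\ldots,J_n)$ in a $g_{\gamma_y'}$-parallel frame along $\gamma_y$ yields a matrix $U(r)$ with $\det U(r) = \mathcal{J}(r,y)$, and the Jacobi equation translates into the matrix Riccati equation $A' + A^2 + \mathcal{R} = 0$ for $A := U'U^{-1}$, with $\tr \mathcal{R}(r) = \mathbf{Ric}(\gamma_y'(r))$. Setting $u(r) := \mathcal{J}(r,y)^{1/(n-1)}$ and using the Newton inequality $\tr(A^2) \geq (\tr A)^2/(n-1)$ (valid because $A$ is $g_{\gamma_y'(r)}$-symmetric --- a Finsler-specific point where Chern-connection parallelism along the geodesic is used), I obtain
\[
u''(r) + \frac{\mathbf{Ric}(\gamma_y'(r))}{n-1}\,u(r) \leq 0, \qquad u(0) = 0,\ u'(0) = 1.
\]
The assumption $\mathbf{Ric} \geq (n-1)k$ together with Sturm comparison against $v'' + kv = 0$, $v(0)=0$, $v'(0)=1$ (whose solution is $\mathfrak{s}_k$) then gives $u \leq \mathfrak{s}_k$, hence $\mathcal{J}(r,y) \leq \mathfrak{s}_k^{n-1}(r)$ on $(0,i_y)$. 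Combined with the identity above, this proves \eqref{densesim}, and integrating over $S_oM \times (0,R)$ yields the volume bound.

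For the rigidity statement, volume equality at some $R_0 > 0$ forces pointwise equality in \eqref{densesim} for every $r \in (0,R_0)$ and every $y \in S_oM$; this in turn forces equality both in Sturm comparison (so $u'' + ku = 0$) and in Newton's inequality (so $A(r) = \tfrac{\tr A(r)}{n-1}\,\mathrm{Id}$). The Riccati equation then makes $\mathcal{R}(r)$ a scalar multiple of the identity whose eigenvalues are the flag curvatures $\mathbf{K}(\gamma_y'(r);v)$ for $v$ in a $g_{\gamma_y'(r)}$-orthonormal basis of $\gamma_y'(r)^\perp$; since their average equals $\mathbf{Ric}/(n-1) = k$, each equals $k$, yielding $\mathbf{K}(\gamma_y'(r);\cdot) \equiv k$ on $[0, R_0]$. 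The main obstacle throughout is the Finsler-specific bookkeeping: because $g_{\gamma_y'(r)}$ varies along the geodesic, one must track carefully the direction at which $\tau$ and the relevant Riemannian-like objects are evaluated, a subtlety absent from the Riemannian setting.
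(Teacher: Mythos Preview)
The paper does not give its own proof of this theorem; it is quoted verbatim from Zhao and Shen \cite{ZS} as background. Your argument is the standard Bishop--Gromov/Riccati--Sturm mechanism adapted to the Finsler setting (Jacobi determinant factorisation $\hat{\sigma}_o = e^{-\tau(\gamma_y')}\mathcal{J}$, trace-Riccati plus Newton inequality to reduce to a scalar ODE, Sturm comparison with $\mathfrak{s}_k$, and rigidity via equality in Newton), which is precisely the approach of \cite{ZS}; it is correct as outlined, with the Finsler-specific care you flag (evaluating $g$, $\tau$, and the curvature endomorphism at the reference direction $\gamma_y'(r)$ and using Chern-connection parallelism along the geodesic to ensure symmetry of $A$) being exactly the points that need attention.
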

\begin{remark}\label{impvoluremark} Due to \eqref{geommeaingofr}, for $\m$-a.e.\ $x\in M$, the gradient $\nabla r|_x$ is the velocity of the minimal geodesic from $ o$ to $x$ at the point $x$.
By this observation and the same proof of  \cite[Theorem 3.4]{ZS}, it is not hard to check that the statements in Theorem \ref{bascivolurcompar} remain valid if the curvature condition is weakened to
$
\mathbf{Ric}(\nabla r)\geq (n-1) k.
$
Moreover, if we additionally assume $\mathbf{S}(\nabla r)\geq (n-1)h$, then it follows by \eqref{geommeaingofr} that for any $y\in S_oM$,
\begin{align*}
\tau({\gamma}_y'(r))-\tau(y)=\tau({\gamma}_y'(r))-\tau({\gamma}_y'(0))=\int^r_0 \mathbf{S}({\gamma}_y'(s)){\dd}s\geq (n-1)hr,
\end{align*}
which combined with  \eqref{densesim} yields the key estimate in our forthcoming arguments, i.e.,
\begin{equation}\label{st11ong}
\hat{\sigma}_o(r,y)\leq e^{-\tau({\gamma}_y'(r))}  \mathfrak{s}_{k}^{n-1}(r)\leq e^{-\tau(y)-(n-1)hr}\mathfrak{s}_{k}^{n-1}(r),\quad r\in (0,i_y).
\end{equation}
\end{remark}

As in \cite{HKZ}, we  recall the {\it integral of distortion}
\begin{equation}\label{cocont}
\mathscr{I}_{\m}(x):=\int_{S_xM} e^{-\tau(y)}{\dd}\nu_x(y)<+\infty.
\end{equation}
This quantity expresses a close relation to the measure; in fact, $\mathscr{I}_{\m}$ is constant if and  only if $\m=C \m_{BH}$ for some constant $C>0$. Moreover,
\begin{equation}\label{budist}
\mathscr{I}_{\m_{BH}}(x)=\vol(\mathbb{S}^{n-1})=n\omega_n,\quad \forall\,x\in M,
\end{equation}
 where $\mathbb{S}^{n-1}$ is the   $(n-1)$-dimensional  Euclidean unit sphere.

Throughout this paper,
  $\chi_U$  denotes the  {\it characteristic function} of a set $U\subset M$. As an application of Theorem \ref{bascivolurcompar} and Remark \ref{impvoluremark}, we obtain the following result.

\begin{lemma}\label{dulemma2}
Let $(M,F,\m)$ be an $n$-dimensional forward  complete $\FMMM$ and let $o$ be a fixed point  in $M$. Set $r(x):=d_F(o,x)$ and $ \mu:=  r^{-p} \m$ for some $p\in \mathbb{R}$. Then
\begin{enumerate}[{\rm (i)}]
\item\label{firtesk1}  if $p\in (-\infty,n)$, then  $\ds\int_{M} \chi_\mathscr{K} {\dd} \mu   < + \infty $ for any compact set $\mathscr{K}\subset M$;

\item\label{firtesk2} if $p\in [n, \infty)$, then $\ds\int_{M} \chi_{B^+_o(R)}  {\dd} \mu =   + \infty$ for any $R>0$.

\end{enumerate}
Hence, $\mu=  r^{-p} \m$ is a locally finite Borel measure if and only if  $p\in (-\infty,n)$.
\end{lemma}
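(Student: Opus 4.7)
The plan is to work in polar coordinates around $o$, relying on the asymptotic \eqref{volumerzero} for uniform control of the density near $r=0$, and on the local finiteness of $\m$ away from $o$.

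First I would fix $\epsilon_0 \in (0,\mathfrak{i}_o)$, which is possible since the injectivity radius at $o$ is positive. On $B^+_o(\epsilon_0)\setminus\{o\}$ the polar disintegration \eqref{volumeexprsso} is valid, and \eqref{volumerzero} combined with the smoothness of the exponential map at the origin shows that the ratio $\psi(r,y):=\hat{\sigma}_o(r,y)/r^{n-1}$ extends continuously to a positive function on $[0,\epsilon_0]\times S_oM$, with $\psi(0,y)=e^{-\tau(y)}$. Since $S_oM$ is compact, there exist constants $0<c_0\le C_0<\infty$ such that
\[
c_0\, r^{n-1}\le \hat{\sigma}_o(r,y)\le C_0\, r^{n-1},\qquad (r,y)\in[0,\epsilon_0]\times S_oM.
\]
This uniform two-sided bound near the pole is the technical heart of the argument; I expect its justification (i.e., uniformity of \eqref{volumerzero} in $y$) to be the only delicate point.

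For part \eqref{firtesk1}, let $\mathscr{K}\subset M$ be compact. If $o\notin\mathscr{K}$, then $r$ achieves a positive minimum on $\mathscr{K}$ and is bounded above, so $r^{-p}$ is bounded on $\mathscr{K}$; since $\m(\mathscr{K})<\infty$ by compactness, the integral is finite. If $o\in\mathscr{K}$, write $\mathscr{K}=(\mathscr{K}\cap B^+_o(\epsilon_0))\cup(\mathscr{K}\setminus B^+_o(\epsilon_0))$, treat the second piece as before, and compute the first via polar coordinates:
\[
\int_{\mathscr{K}\cap B^+_o(\epsilon_0)} r^{-p}\,\dm \le C_0\,\nu_o(S_oM)\int_0^{\epsilon_0} r^{n-1-p}\,{\dd}r,
\]
which is finite because $n-1-p>-1$ when $p<n$.

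For part \eqref{firtesk2}, given any $R>0$ set $R_0:=\min\{R,\epsilon_0\}>0$. Restricting to $B^+_o(R_0)\subset B^+_o(R)$ and using the lower bound from Step~1,
\[
\int_{M}\chi_{B^+_o(R)}\,{\dd}\mu \ge \int_{S_oM}\int_0^{R_0} c_0\, r^{n-1-p}\,{\dd}r\,{\dd}\nu_o(y) = c_0\,\nu_o(S_oM)\int_0^{R_0} r^{n-1-p}\,{\dd}r = +\infty,
\]
since $n-1-p\le -1$ when $p\ge n$. The final equivalence in the statement then follows: for $p<n$ any point of $M$ has a compact neighborhood with finite $\mu$-measure by \eqref{firtesk1}, while for $p\ge n$ every neighborhood of $o$ contains some $B^+_o(R)$ and hence has infinite $\mu$-measure by \eqref{firtesk2}.
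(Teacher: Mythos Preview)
Your argument is correct and follows the same overall blueprint as the paper: polar coordinates around $o$, a near/far splitting at a radius $\epsilon_0<\mathfrak{i}_o$, and the asymptotic \eqref{volumerzero} to control $\hat{\sigma}_o(r,y)\sim e^{-\tau(y)}r^{n-1}$ near the pole. The one substantive difference is in the ``far'' piece $\mathscr{K}\setminus B^+_o(\epsilon_0)$: the paper encloses $\mathscr{K}$ in a forward ball $B^+_o(R)$, extracts local curvature bounds on its (compact) closure, and then applies the volume comparison of Remark~\ref{impvoluremark} to estimate $\hat{\sigma}_o$ by $e^{-\tau(y)-(n-1)hr}\mathfrak{s}_{-k^2}^{n-1}(r)$ on the whole annulus. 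You bypass this entirely by observing that on any compact set avoiding $o$ the function $r^{-p}$ is bounded and $\m$ is finite; this is more elementary and avoids invoking Theorem~\ref{bascivolurcompar} at all, at the small cost of a case split. Both proofs rely on the uniformity in $y\in S_oM$ of \eqref{volumerzero} (the paper implicitly when choosing a single $\varepsilon_0$, you explicitly via the continuous extension of $\hat{\sigma}_o/r^{n-1}$), and your remark that this follows from the smoothness of the exponential map at the origin is the right justification.
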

\begin{proof}\eqref{firtesk1}
Given a  compact set $\mathscr{K}\subset M$,  since $r(x)$ is a continuous function, there is  some  $R\in (0,+\infty)$ such that  $\mathscr{K}\subset  {B^+_o(R)}$. Because the forward completeness implies the compactness of $ \overline{B^+_o(R)}$, we may assume that
\[
\mathbf{Ric}|_{\overline{B^+_o(R)}}\geq -(n-1)k^2, \quad \mathbf{S}|_{\overline{B^+_o(R)}} \geq (n-1) h,
\]
for some constants $k>0$ and $h\in \mathbb{R}$. It follows by Remark \ref{impvoluremark} that
 for any $y\in S_oM$,
\begin{equation}\label{st11ong2}
\hat{\sigma}_o(r,y)\leq e^{-\tau({\gamma}_y'(r))}  \mathfrak{s}_{-k^2}^{n-1}(r)=e^{-\tau(y)-(n-1)hr}\mathfrak{s}_{-k^2}^{n-1}(r),\quad r\in (0,\min\{R,i_y\}).
\end{equation}
On the other hand,  due to (\ref{volumerzero}),
there exists a small $\varepsilon_0\in (0,\min\{\mathfrak{i}_o,R\})$ such that
\begin{equation}\label{st1short}
\hat{\sigma}_o(r,y) \leq 2 e^{-\tau(y)}r^{n-1}, \quad \forall\,  r \in (0, \varepsilon_0), \ \forall\,y\in S_oM.
\end{equation}

%By  letting
%\begin{equation}\label{cocont}
%\mathscr{I}_{\m}(o):=\int_{S_oM}e^{-\tau(y)}{\dd}\nu_o(y) <+\infty,
%\end{equation}
By \eqref{volumeexprsso}, \eqref{cocont}, \eqref{st11ong2} and \eqref{st1short}, we obtain
\begin{align*}
\int_{M} \chi_\mathscr{K} {\dd} \mu&\leq \int_{M} \chi_{B^+_o(R)} {\dd} \mu\leq \int_{B^+_{o}(\varepsilon_0)} r^{-p}\dm + \int_{B^+_o(R) \setminus B^+_{o}(\varepsilon_0)}  r^{-p} \dm\\
&\leq  2\mathscr{I}_{\m}(o)\left( \int^{\varepsilon_0}_0   r^{n-1 - p} {\dd}r \right)+\mathscr{I}_{\m}(o)\left( \int^{R}_{\varepsilon_0}  e^{-(n-1)hr} \mathfrak{s}_{-k^2}^{n-1}(r) r^{-p} {\dd}r \right) \\
& \leq  \frac{2 \mathscr{I}_{\m}(o)}{n - p} \varepsilon_0^{n-p}  + \mathscr{I}_{\m}(o) \frac{\varepsilon_0^{-p}}{(2k)^{n-1}} \int^{R}_{\varepsilon_0} e^{(n-1)(k-h)r} {\dd}r<+\infty.
\end{align*}
%Then
%\[
%\int_{M} \chi_\mathscr{K} {\dd} \mu \leq
%\begin{cases}
%\frac{2 \mathscr{I}_{\m}(o)}{n - p} \varepsilon_0^{n-p} + \frac{\mathscr{I}_{\m}(o) \varepsilon_0^{-p}}{(n-1)(k - h)} [ e^{(n-1)(k-h)R} - e^{(n-1)(k-h)\varepsilon_0}], &\text{ if } k \neq h; \\
%\\
%\frac{2 \mathscr{I}_{\m}(o)}{n - p} \varepsilon_0^{n-p} +  \mathscr{I}_{\m}(o) \varepsilon_0^{-p}(R - \varepsilon_0) , &\text{ if } k = h.
%\end{cases}
%\]
%Hence, $\int_{M} \chi_\mathscr{K} {\dd} \mu   < + \infty $ follows.

\smallskip

\eqref{firtesk2}  For any $R>0$,
the limit (\ref{volumerzero}) furnishes a small $\delta_0\in (0, \min\{\mathfrak{i}_o,R\})$ such that
\[
\hat{\sigma}_o(r,y) \geq \frac{1}{2} e^{-\tau(y)}r^{n-1}, \quad \forall\,  r \in (0, \delta_0), \ \forall\,y\in S_oM,
\]
which combined with \eqref{volumeexprsso} and \eqref{cocont} provides the estimate
\begin{align*}
\ds\int_{M} \chi_{B^+_o(R)}  {\dd} \mu &  \geq
 \ds\int_{B^+_o(\delta_0)} r^{-p} \hat{\sigma}_o(r,y) {\dd}r {\dd}\nu_o(y) \geq \frac{\mathscr{I}_{\m}(o)}{2}  \int_{0}^{\delta_0} r^{n-1 -p} {\dd}r =+\infty,
\end{align*}
where we used the assumption $p\in [n,+\infty)$.
\end{proof}

%{\color{blue}(I suggest that we should combine Lemma \ref{functionlpnrombounded} with Lemma \ref{lemma_up/rp} since the assumptions are same.)}

\section{Sobolev spaces over Finsler manifolds and approximation}\label{Sobolevsapce}

\begin{definition}\label{soboloevespace}Let $(M,F,\m)$ be  an $n$-dimensional forward complete $\FMMM$.
Given $u\in C^\infty_0(M)$ and $p\in [1,\infty)$, define a pseudo-norm as
\begin{equation}\label{W1p}
\|u\|_{W^{1,p}_{\m}}:=\|u\|_{L^p_{\m}}+\|{\dd} u\|_{L^p_{\m}}:=\left(\int_M |u|^p \dm \right)^{1/p}+\left(\int_M F^{*p}({\dd} u)  \dm  \right)^{1/p}.
\end{equation}
The  {\it  Sobolev space} $W^{1,p}_0(M,\m)$ is defined as the closure of $C^\infty_0(M)$ with respect to the backward topology induced by $\|\cdot\|_{W^{1,p}_{\m}}$, i.e.,
\[
u\in W^{1,p}_0(M,\m) \quad \Longleftrightarrow \quad \exists \, (u_n)\subset C^\infty_0(M) \text{ satisfying }\lim_{n\rightarrow \infty}\|u-u_n\|_{W^{1,p}_{\m}}= 0.
\]
\end{definition}

\begin{remark}\label{sobonormprop}
By the properties of $F^*$, it is not hard to verify that for any $u,v\in  W^{1,p}_0(M,\m)$,
 \begin{itemize}
\item $\|u\|_{W^{1,p}_{\m}}\geq 0$ with equality if and only if $u=0$;

\item $\|\alpha\,u\|_{W^{1,p}_{\m}}= \alpha \|u\|_{W^{1,p}_{\m}}$ for any $\alpha>0$;

\item $\|u+v\|_{W^{1,p}_{\m}}\leq \|u\|_{W^{1,p}_{\m}}+\|v\|_{W^{1,p}_{\m}}$.

\end{itemize}
%Moreover, one can define an asymmetric metric  $\md$ on $C^\infty_0(M)$ as
%$\md(u,v):=\|v-u\|_{W^{1,p}_{\m}}$.
%Thus, $W^{1,p}_0(M,\m)$ is exactly the backward completeness of the asymmetric metric space $(C^\infty_0(M),\md)$.
\end{remark}

\begin{proposition}\label{sobolevspaceline}Let $(M,F,\m)$ be an $n$-dimensional forward complete $\FMMM$.
If   $W^{1,p}_0(M,\m)$ is not a vector space for some $p\in [1,\infty)$, then $\lambda_F(M)=+\infty$.
 \end{proposition}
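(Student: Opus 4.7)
The plan is to prove the contrapositive: assuming $\lambda_F(M) = \Lambda < +\infty$, I will verify that $W^{1,p}_0(M,\m)$ is closed under linear combinations. The whole argument hinges on the observation that finite reversibility makes the (a priori asymmetric) $W^{1,p}$ pseudo-norm quasi-symmetric under sign flip, which is the only linearity axiom that does not already follow from Remark \ref{sobonormprop}.

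The key pointwise step uses the identity $\lambda_F(x) = \lambda_{F^*}(x)$ from \eqref{revercondin}: for every $\xi \in T^*_xM$,
\[
F^*(x, -\xi) \leq \lambda_{F^*}(x)\, F^*(x,\xi) \leq \Lambda\, F^*(x,\xi).
\]
Applying this to $\xi = \dd u$ for $u \in C^\infty_0(M)$, raising to the $p$-th power and integrating with respect to $\m$ yields $\|\!-\!u\|_{W^{1,p}_{\m}} \leq \Lambda \|u\|_{W^{1,p}_{\m}}$. Combined with the positive homogeneity from Remark \ref{sobonormprop}, this gives $\|c\,u\|_{W^{1,p}_{\m}} \leq \Lambda |c|\, \|u\|_{W^{1,p}_{\m}}$ for every $c \in \mathbb{R}$ and every $u \in C^\infty_0(M)$.

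Next, given $u,v \in W^{1,p}_0(M,\m)$ and scalars $\alpha,\beta \in \mathbb{R}$, choose approximating sequences $(u_n),(v_n) \subset C^\infty_0(M)$ with $\|u-u_n\|_{W^{1,p}_{\m}} \to 0$ and $\|v-v_n\|_{W^{1,p}_{\m}} \to 0$. Then $\alpha u_n + \beta v_n \in C^\infty_0(M)$, and by the triangle inequality of Remark \ref{sobonormprop} together with the previous estimate,
\[
\|(\alpha u_n + \beta v_n) - (\alpha u_m + \beta v_m)\|_{W^{1,p}_{\m}} \leq \Lambda |\alpha|\, \|u_n - u_m\|_{W^{1,p}_{\m}} + \Lambda |\beta|\, \|v_n - v_m\|_{W^{1,p}_{\m}} \longrightarrow 0,
\]
so $(\alpha u_n + \beta v_n)$ is a Cauchy sequence in $C^\infty_0(M)$ with respect to $\|\cdot\|_{W^{1,p}_{\m}}$. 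Since $u_n \to u$ and $v_n \to v$ in $L^p_\m$, we also have $\alpha u_n + \beta v_n \to \alpha u + \beta v$ in $L^p_\m$, so the same element $\alpha u + \beta v$ plays the role of the limit required by Definition \ref{soboloevespace}. Hence $\alpha u + \beta v \in W^{1,p}_0(M,\m)$, which together with $0 \in W^{1,p}_0(M,\m)$ shows that $W^{1,p}_0(M,\m)$ is a vector space, contradicting the hypothesis.

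The only genuine subtlety in executing the plan is reconciling the abstract completion language (Cauchy sequences in the pseudo-norm) with the concrete pointwise identification of the limit as $\alpha u + \beta v$; this is routine once one observes that the $L^p_\m$-piece of $\|\cdot\|_{W^{1,p}_{\m}}$ already separates functions, so the Cauchy limit in the Sobolev pseudo-norm is uniquely determined and must coincide with the $L^p_\m$-limit. All remaining verifications are immediate consequences of the pointwise inequality $F^*(-\xi) \leq \Lambda F^*(\xi)$ and the properties already recorded in Remark \ref{sobonormprop}.
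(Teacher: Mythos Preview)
Your proof is correct and follows essentially the same approach as the paper: both arguments hinge on the pointwise bound $F^*(-\xi)\le \lambda_{F^*}(x)F^*(\xi)\le \Lambda\,F^*(\xi)$ coming from \eqref{revercondin}, and use it to show that finite reversibility forces closure of $W^{1,p}_0(M,\m)$ under negation (hence under all linear combinations). Your version spells out the approximation argument more explicitly than the paper, which simply asserts that failure of the vector-space property produces some $u$ with $\|{-u}\|_{W^{1,p}_\m}=+\infty$ and derives the same contradiction.
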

\begin{proof} Due to Remark \ref{sobonormprop}, we clearly have that $\alpha_1u_1+\alpha_2u_2\in W^{1,p}_0(M,\m)$ for every $u_1,u_2\in W^{1,p}_0(M,\m)$ and $\alpha_1,\alpha_2\in \mathbb{R}_+$.

Since $W^{1,p}_0(M,\m)$ is not a vector space for some $p\in [1,\infty)$, there exists $u\in W^{1,p}_0(M,\m)$ with $\|-u\|_{W^{1,p}_{\m}}=+\infty$, i.e.,
$\int_M F^{*p}(-{\dd}u) \dm=+\infty$.
Suppose by contradiction that $\lambda_F(M)<+\infty$. Then  \eqref{revercondin} yields $\lambda_F(M)=\lambda_{F^*}(M)$ and hence,
\[
\int_M F^{*p}(-{\dd}u) \dm\leq \lambda^p_F(M)\int_M F^{*p}({\dd}u) \dm<+\infty,
\]
which is a contradiction.
\end{proof}

\begin{proposition}
Let $(M,F,\m)$ be an $n$-dimensional forward complete $\FMMM$. Suppose that there exist $p\in [1,\infty)$ and $q\in [1,p)$ satisfying
\begin{equation}\label{basiccondition}
\int_M \lambda^{\frac{qp}{p-q}}_F(x)\dm(x)<+\infty.
\end{equation}
Then  $\spa \{u\in W^{1,p}_0(M,\m)\}$ is a linear subspace of $W^{1,q}_0(M,\m)$.
\end{proposition}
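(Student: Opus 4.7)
My strategy is to reduce the statement to two sub-inclusions, each of which follows from an appropriate application of Hölder's inequality that exploits the integrability hypothesis (3.2). First I would observe that $\lambda_F(x)\geq 1$ everywhere forces
\[
\m(M)\leq \int_M \lambda_F^{qp/(p-q)}(x)\dm(x)<+\infty,
\]
so that $(M,\m)$ is a finite measure space and in particular $L^p_\m\hookrightarrow L^q_\m$. Combined with Remark 3.1 applied to the exponent $q$ (which says $W^{1,q}_0(M,\m)$ is closed under addition and multiplication by positive scalars), it suffices to establish the two claims
\begin{enumerate}[\rm (i)]
\item $W^{1,p}_0(M,\m)\subseteq W^{1,q}_0(M,\m)$;
\item $-u\in W^{1,q}_0(M,\m)$ whenever $u\in W^{1,p}_0(M,\m)$.
\end{enumerate}
Indeed, an arbitrary element $\sum_i \alpha_i u_i$ of $\spa\{u\in W^{1,p}_0(M,\m)\}$ can then be rewritten as $\sum_i |\alpha_i| v_i$ with $v_i=u_i$ if $\alpha_i\geq 0$ and $v_i=-u_i$ if $\alpha_i<0$, and by (i)-(ii) each $v_i$ lies in $W^{1,q}_0(M,\m)$; the closure properties of Remark 3.1 (for $q$) conclude.

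For (i), pick an approximating sequence $u_n\in C^\infty_0(M)$ for $u$ in $W^{1,p}_\m$. Hölder's inequality with exponents $p/q$ and $p/(p-q)$ on the finite measure space yields
\[
\|u_n-u\|_{L^q_\m}\leq \m(M)^{\frac{1}{q}-\frac{1}{p}}\|u_n-u\|_{L^p_\m}\to 0,
\]
and analogously $\int_M F^{*q}({\dd}u_n-{\dd}u)\dm\leq \m(M)^{1-q/p}\left(\int_M F^{*p}({\dd}u_n-{\dd}u)\dm\right)^{q/p}\to 0$, so $u_n\to u$ also in the $W^{1,q}_\m$ pseudo-norm. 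For (ii), the same sequence $-u_n\in C^\infty_0(M)$ is the natural candidate. The $L^q$-part is handled exactly as before. For the gradient part, the key point is the pointwise estimate (cf.\ (2.6) and (2.9))
\[
F^*(x,-\xi)\leq \lambda_{F^*}(x)F^*(x,\xi)=\lambda_F(x)F^*(x,\xi),
\]
applied to $\xi={\dd}u-{\dd}u_n$, which gives
\[
\int_M F^{*q}\bigl({\dd}(-u)-{\dd}(-u_n)\bigr)\dm\leq \int_M \lambda_F^q(x)\,F^{*q}({\dd}u-{\dd}u_n)\dm.
\]
Now applying Hölder with exponents $p/q$ and $p/(p-q)$:
\[
\int_M \lambda_F^q F^{*q}({\dd}u-{\dd}u_n)\dm\leq \left(\int_M \lambda_F^{qp/(p-q)}\dm\right)^{\frac{p-q}{p}}\left(\int_M F^{*p}({\dd}u-{\dd}u_n)\dm\right)^{q/p};
\]
the first factor is finite by hypothesis (3.2), while the second tends to $0$ by $W^{1,p}$-convergence of $u_n$ to $u$. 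Hence $-u_n\to -u$ in $W^{1,q}_\m$, proving (ii).

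\textbf{Main obstacle.} The only conceptual point, and what explains the precise exponent $qp/(p-q)$ appearing in (3.2), is handling the asymmetry of $F^*$ in step (ii): one must pay $\lambda_F^q$ pointwise and then transfer this cost to $L^{p/(p-q)}(\m)$ so that the remaining factor involving ${\dd}u-{\dd}u_n$ can be bounded by the given $W^{1,p}$-norm. Everything else (finiteness of $\m(M)$ and the elementary $L^p\hookrightarrow L^q$ inclusion) is a direct byproduct of $\lambda_F\geq 1$ and the same Hölder pairing.
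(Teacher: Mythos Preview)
Your proof is correct and follows essentially the same approach as the paper's: both first use $\lambda_F\geq 1$ to deduce $\m(M)<+\infty$, then prove the inclusions $u\in W^{1,q}_0(M,\m)$ and $-u\in W^{1,q}_0(M,\m)$ for every $u\in W^{1,p}_0(M,\m)$ via the identical two H\"older estimates (the plain one for the $L^q$ and gradient parts of $u$, and the $\lambda_F$-weighted one for the gradient part of $-u$). You are slightly more explicit than the paper in spelling out how Remark~\ref{sobonormprop} combines (i) and (ii) to handle the full linear span, but there is no substantive difference in the argument.
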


%{\color{red}\begin{remark}
%If $q=p$, then $\lambda^{\frac{qp}{p-q}}_F(x)=\lambda^\infty_F(x)$, i.e., $\lambda_F(x)\in L^\infty(M,\m)$, i.e., $\lambda_F(M)<+\infty$.
%\end{remark}}
\begin{proof}%{\color{red} If $p=q$, then the proof is trivial.
%		 Hence, we consider the case when $q<p$.}
	 Since $\lambda_F(x)\geq 1$ for every $x\in M$, we have
$
\m(M)\leq \int_M \lambda^{\frac{qp}{p-q}}_F(x)\dm(x)<+\infty.
$
Hence, without loss of generality, we may assume $\m(M)=1$ for convenience.

Given any $u\in W^{1,p}_0(M,\m)$, there exists a sequence $(u_k)\subset C^\infty_0(M)$ such that $\|u-u_k\|_{W^{1,p}_{\m}}\rightarrow 0$.
 H\"older's inequality then yields
$\|u-u_k\|_{W^{1,q}_{\m}}\leq \|u-u_k\|_{W^{1,p}_{\m}}\rightarrow 0$, i.e., $u\in W^{1,q}_0(M,\m)$.
Moreover, \eqref{basiccondition} combined with H\"older's inequality again gives
\begin{align*}
\|{\dd}(-u)-{\dd}(-u_k)\|_{L^q_{\m}}&=\left(\int_M F^{*q}({\dd}u_k-{\dd}u)\dm \right)^{1/q}\leq \left(\int_M \lambda_F^q F^{*q}({\dd}u-{\dd}u_k)\dm \right)^{1/q}\\
&\leq  \left( \int_M \lambda_F^{\frac{qp}{p-q}}\dm   \right)^{\frac{p-q}{qp}} \left(  \int_M F^{*p}({\dd}u-{\dd}u_k)\dm \right)^{\frac{1}{p}}\leq \|\lambda_F\|_{L^{\frac{qp}{p-q}}_{\m}} \| u- u_k\|_{W^{1,p}_{\m}}\\&\rightarrow 0,
\end{align*}
which implies $-u\in W^{1,q}_0(M,\m)$.
\end{proof}

In the rest of this section we consider the approximation by Lipschitz functions, which plays an important role in the proof of our main results.

\begin{definition}
A function $f:M\rightarrow \mathbb{R}$ is called {\it Lipschitz} if there exists a constant $C\geq 0$ such that
\[
|f(x_1)-f(x_2)|\leq  C\,d_F(x_1,x_2),\quad \forall\,x_1,x_2\in M.
\]
The function $f$ is also called {\it $C$-Lipschitz}.
The minimal $C$ satisfying the above inequality is called the {\it Lipschitz constant} or {\it dilatation} of $f$, denoted by $\dil(f)$. %And $f:U\rightarrow \mathbb{R}$ is called {\it locally Lipschitz} if for every compact set $\mathscr{K}\subset U$, the restriction $f|_\mathscr{K}$ is Lipschitz.
\end{definition}

%\begin{remark}
%The usual definition of locally Lipschitz function is that every point has a neighbourhood on which the restriction  of the  function is Lipschitz. Since every manifold is locally compact, these two definitions are equivalent.
%\end{remark}

For convenience, in the sequel  we use $\Lip(M)$   to denote the collection of Lipschitz functions on $M$. Besides, let $\Lip_0(M):=C_0(M)\cap \Lip(M)$ be the collection of Lipschitz functions  with compact support.

%Since the distance $d_F$ is asymmetric, it is not easy to find a Lipschitz function with noncompact support. However, Lipschitz functions with compact supports possess nice properties.

\begin{lemma}\label{localtogloaLip} Let $(M,F)$ be a forward complete Finsler manifold. Then $C^\infty_0(M)\subset \Lip_0(M)$.
\end{lemma}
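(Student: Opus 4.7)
The plan is to derive the Lipschitz constant directly from the co-norm $F^*$ applied to $\pm\,{\dd}u$, via integration along curves. Since $u\in C_0^\infty(M)$ already has compact support, the task reduces to proving the Lipschitz estimate $|u(x_1)-u(x_2)|\leq C\,d_F(x_1,x_2)$ for some finite constant $C$.

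First, I would define
\[
C:=\sup_{x\in M}\max\bigl\{F^*(x,{\dd}u|_x),\; F^*(x,-{\dd}u|_x)\bigr\}.
\]
The map $x\mapsto F^*(x,\pm\,{\dd}u|_x)$ is continuous on $M$ (using continuity of $F^*$ on $T^*M$ and $F^*(x,0)=0$) and vanishes outside the compact set $\supp(u)$, so it attains its supremum on that compact set; in particular $C<+\infty$.

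Next, for arbitrary $x_1,x_2\in M$, pick any piecewise smooth curve $\gamma:[0,1]\to M$ with $\gamma(0)=x_1$ and $\gamma(1)=x_2$. The duality inequality \eqref{dualff*} yields
\[
\langle\gamma'(t),\,{\dd}u|_{\gamma(t)}\rangle\leq F(\gamma(t),\gamma'(t))\,F^*(\gamma(t),{\dd}u|_{\gamma(t)}),
\]
and applying the same inequality with $-{\dd}u$ in place of ${\dd}u$ bounds $-\langle\gamma'(t),\,{\dd}u|_{\gamma(t)}\rangle$ by $F(\gamma(t),\gamma'(t))\,F^*(\gamma(t),-{\dd}u|_{\gamma(t)})$. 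Combining the two estimates gives
\[
\bigl|{\dd}u|_{\gamma(t)}(\gamma'(t))\bigr|\leq C\,F(\gamma(t),\gamma'(t)).
\]
Since $u(x_2)-u(x_1)=\int_0^1 {\dd}u|_{\gamma(t)}(\gamma'(t))\,{\dd}t$ by the fundamental theorem of calculus, integrating the previous bound and then taking the infimum over all piecewise smooth curves $\gamma$ from $x_1$ to $x_2$ produces $|u(x_1)-u(x_2)|\leq C\,L_F(\gamma)$ and finally $|u(x_1)-u(x_2)|\leq C\,d_F(x_1,x_2)$, so $u\in\Lip_0(M)$ with $\dil(u)\leq C$.

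The only possible subtlety is the irreversibility of $F$: because $d_F$ is not symmetric, one might worry whether the estimate is still valid when $x_1,x_2$ are swapped. This causes no difficulty, since the curves used above are oriented from $x_1$ to $x_2$, matching precisely the definition of $d_F(x_1,x_2)$, and the roles of $x_1,x_2$ can be interchanged by running the same argument with $\gamma$ reversed (which is where the second bound involving $F^*(\cdot,-{\dd}u)$ becomes essential). Forward completeness is not genuinely used for the estimate itself; it only ensures the ambient framework in which $d_F$ is well-behaved and the space $\Lip_0(M)$ is meaningfully defined.
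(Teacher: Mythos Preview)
Your proof is correct and follows the same core idea as the paper's: integrate ${\dd}u$ along a curve from $x_1$ to $x_2$ and control the integrand via the duality inequality \eqref{dualff*}. The two arguments differ in two minor technical choices. First, the paper picks a \emph{minimal} geodesic from $x_1$ to $x_2$ (whose existence is where forward completeness enters), whereas you take an arbitrary piecewise smooth curve and pass to the infimum; your variant therefore does not actually need forward completeness, as you correctly observe. Second, to handle the sign of $\langle\gamma',{\dd}u\rangle$, the paper bounds $F^*(-{\dd}u)\leq\lambda_F(U)\,F^*({\dd}u)$ on $U=\supp u$ and sets $C=\lambda_F(U)\max_{U}F^*({\dd}u)$, while you absorb the sign directly by taking $C=\sup_M\max\{F^*(\pm{\dd}u)\}$. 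Your route is slightly more elementary in that it avoids invoking the reversibility constant altogether.
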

\begin{proof}It suffices to show $C^\infty_0(M)\subset \Lip(M)$. In fact, for every $u\in C^\infty_0(M)$, given every $x_1,x_2\in U:=\supp u$, the forward completeness of $(M,F)$ yields a unit speed minimal geodesic $\gamma(t)$, $t\in [0,d_F(x_1,x_2)]$ from $x_1$ to $x_2$. Since $U$ is compact, we have
\[
\lambda_F(U)<+\infty,\quad \max_{x\in U}F^*({\dd} u|_x)<+\infty.
\]
By letting $C:=\lambda_F(U)\max_{x\in U}F^*({\dd} u|_x)$ and using \eqref{dualff*}, we have
\begin{align*}
|u(x_1)-u(x_2)|=&\left| \int^{d_F(x_1,x_2)}_0 \frac{{\dd}}{{\dd}t}u(\gamma(t))  {\dd}t\right|=\left| \int^{d_F(x_1,x_2)}_0 \langle \dot{\gamma}(t),{\dd}u\rangle  {\dd}t\right|\\
\leq &  \int^{d_F(x_1,x_2)}_0 \left|\langle \dot{\gamma}(t),{\dd}u\rangle\right|  {\dd}t\leq  \lambda_F(U) \int^{d_F(x_1,x_2)}_0 F(\dot{\gamma}(t)) F^*({\dd} u) {\dd}t\\=&  C \, d_F(x_1,x_2),
\end{align*}
which implies $C^\infty_0(M)\subset \Lip(M)$.
\end{proof}

By Rademacher's theorem and local coordinate systems, it follows that every Lipschitz function on $(M,F,\m)$ is differentiable $\m$-a.e. In addition, we have the following approximation result which is useful in our proofs.

\begin{proposition}\label{lipsconverppax-prop}
Let $(M,F,\m)$ be a forward complete $\FMMM$. For every $u\in \Lip_0(M)$,   there exists a sequence of smooth Lipschitz functions $u_l\in C^\infty_0(M)\cap \Lip_0(M)$  such that
\[
\supp u\cup \supp u_l\subset \mathscr{K},\quad |u_l|\leq C,\quad F^*({\dd} u_l)\leq   \dil(u_l)\leq C,\quad u_l\rightrightarrows u,\quad \|u_l-u \|_{W^{1,p}_{\m}}\rightarrow 0,
\]
where $\mathscr{K}\subset M$ is a compact set, $C>0$ is  a constant and ``\,$\rightrightarrows$" means uniform convergence. Moreover, if $u$ is nonnegative, so are $u_l$'s.
\end{proposition}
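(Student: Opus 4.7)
My plan is to build the $u_l$'s by a partition-of-unity localization followed by standard Euclidean mollification in coordinate charts. Since $\supp u$ is compact, I would first cover it by finitely many relatively compact coordinate balls $(V_i,\varphi_i)$, $i=1,\dots,N$, choose smaller open sets $U_i\Subset V_i$ that still cover $\supp u$, and let $\{\chi_i\}$ be a smooth partition of unity subordinate to $\{U_i\}$, so that each $u_i:=\chi_i u\in\Lip_0(M)$ has $\supp u_i\subset U_i$ and $\sum_i u_i=u$. Setting $\mathscr K:=\overline{\bigcup_i V_i}$, the continuity of $g_{ij}$ on the compact set $\mathscr K$ provides, in each chart, uniform constants $a_i,A_i>0$ such that
\[
a_i|y|_{\mathrm{Eucl}}\le F(x,y)\le A_i|y|_{\mathrm{Eucl}},\qquad a_i|\xi|_{\mathrm{Eucl}}\le F^*(x,\xi)\le A_i|\xi|_{\mathrm{Eucl}},\quad x\in V_i.
\]

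Next I would push each $u_i$ through $\varphi_i^{-1}$ to obtain a compactly supported Lipschitz function $\tilde u_i$ on $\varphi_i(V_i)\subset\mathbb R^n$, convolve it with a standard non-negative mollifier $\eta_\varepsilon$, and pull the smooth output back to $M$, extending by zero, to get $v_{i,\varepsilon}\in C_0^\infty(V_i)$. For $\varepsilon$ small enough, $\supp v_{i,\varepsilon}\subset V_i\subset\mathscr K$; one has $\|v_{i,\varepsilon}\|_\infty\le\|u_i\|_\infty$; the Euclidean Lipschitz constant of $\tilde u_i*\eta_\varepsilon$ is bounded uniformly in $\varepsilon$ by that of $\tilde u_i$; one has $v_{i,\varepsilon}\rightrightarrows u_i$ and the Euclidean gradient of $\tilde u_i*\eta_\varepsilon$ converges to that of $\tilde u_i$ almost everywhere and in every $L^p_{\mathrm{Leb}}$; and non-negativity of $\eta_\varepsilon$ preserves $v_{i,\varepsilon}\ge 0$ whenever $u\ge 0$. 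I then set $u_l:=\sum_{i=1}^N v_{i,\varepsilon_l}\in C_0^\infty(M)$ for a sequence $\varepsilon_l\downarrow 0$ small enough that all supports remain in $\mathscr K$.

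The verification runs as follows. The chartwise comparability between Finsler and Euclidean norms on $\mathscr K$ converts the uniform Euclidean Lipschitz bound on each $v_{i,\varepsilon_l}$ into a uniform Finsler dilatation bound; summing over $i$ gives $\dil(u_l)\le C$ for a constant $C$ depending only on $u$ and the cover. At points of differentiability this forces $F^*({\dd} u_l)\le\dil(u_l)\le C$ by the same geodesic argument used in the proof of Lemma~\ref{localtogloaLip} (testing the differential against unit-speed geodesics). Uniform convergence $u_l\rightrightarrows u$ is immediate from the finite sum structure and the Euclidean mollification case. Since $\m(\mathscr K)<\infty$ by smoothness of $\m$ and compactness of $\mathscr K$, uniform convergence already yields $L^p_\m$-convergence of $u_l$ to $u$; the a.e.\ convergence ${\dd} u_l\to{\dd} u$ together with the uniform majorant $F^{*p}({\dd} u_l)\le C^p$ produces $L^p_\m$-convergence of the differentials via dominated convergence, so $\|u_l-u\|_{W^{1,p}_\m}\to 0$.

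The only non-routine point is converting the Euclidean Lipschitz control produced by convolution in each chart into a single intrinsic Finslerian dilatation bound for $u_l$. This is precisely what the compact set $\mathscr K$ and the smoothness of $F$ provide: two-sided comparability between Finsler and Euclidean norms with constants uniform across all charts and all approximants $v_{i,\varepsilon_l}$. Everything else reduces to the classical theory of mollifier convolution on $\mathbb R^n$ and the dominated convergence theorem on the finite-measure set $\mathscr K$.
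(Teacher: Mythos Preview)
Your proposal is correct and follows the same route as the paper: cover $\supp u$ by finitely many coordinate charts, localize via a smooth partition of unity, mollify each piece in Euclidean coordinates, and exploit the uniform two-sided comparability of $F$ and $F^*$ with the Euclidean norms on the compact set $\mathscr K$ to transfer all bounds (sup-norm, Lipschitz constant, $W^{1,p}$-convergence, non-negativity) back to $M$. One small slip in your dominated-convergence step: the majorant you need is for $F^{*p}({\dd}u_l-{\dd}u)$, not for $F^{*p}({\dd}u_l)$; the fix is immediate since $F^*({\dd}u_l-{\dd}u)\le F^*({\dd}u_l)+F^*(-{\dd}u)\le C+\dil(u)$ on $\mathscr K$ (the paper instead bounds $F^*({\dd}u_l-{\dd}u)$ directly by the Euclidean $|{\dd}h_{i,\epsilon}-{\dd}h_i|$ and invokes the classical $W^{1,p}$-convergence of mollifiers).
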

\begin{proof} Although the proof is standard,  we provide it for readers' convenience.
	Let $\mathbb{B}^n$ denote the  standard Euclidean unit open ball in $\mathbb{R}^n$.  The proof is divided into two steps.
	
	\smallskip
	
	\textit{Step 1.} We show that for every $C$-Lipschitz function $h\in C_0(\mathbb{B}^n)$ and every $\epsilon>0$,  there exists a $C$-Lipschitz function $h_\epsilon\in C^\infty_0(\mathbb{B}^n)$ such that
	\[
	h_\epsilon\rightrightarrows h,\quad \|h_\epsilon -h\|_{W^{1,p}(\mathbb{B}^n)}\rightarrow 0,\quad \text{as }\epsilon\rightarrow 0^+,
	\]
	where $\|\cdot\|_{W^{1,p}(\mathbb{B}^n)}$ is the standard Sobolev norm on $\mathbb{B}^n$, see \eqref{W1p}.
	Let $h_\epsilon$
	be the convolution of $h$
	with a standard mollifier $\phi_\epsilon$, i.e., $h_\epsilon:=h*\phi_\epsilon$. Note that
	\[
	|h_\epsilon(x)-h_\epsilon(y)|\leq |h(\cdot)-h(\cdot+y-x)| * \phi_\epsilon\leq C |x-y|,\quad \forall\,x,y\in C_0(\mathbb{B}^n),
	\]
	i.e., $h_\epsilon$ is $C$-Lipschitz. The other properties of $h_\epsilon$ follow from
	the standard theory of Sobolev spaces. In particular, $h_\epsilon$ is nonnegative whenever $h\geq 0$.
	
	\smallskip
	
	\textit{Step 2.}
	Since $\supp u$ is compact, there exists a finite coordinate covering $\{(V_i,\varphi_i)\}_{i=1}^{N<\infty}$ of $\supp u$ such that $V_i\Subset M$ and $\varphi_i(V_i)=\mathbb{B}^n$. We may also assume that $\mathscr{K}:=\cup_{i=1}^N\overline{V_i}$ is a compact subset in $M$.
	Let $\{\eta_i\}_{i=1}^N$ be a smooth partition of unity subordinated to $\{V_i\}_{i=1}^N$.
	An argument similar to \cite[(6.2.3)]{BCS} yields a constant $L\geq 1$ such that for all $i\in \{1,\ldots,N\}$,
	\begin{equation}\label{normconrraoll}
	L^{-1}\leq \frac{F^*(\varphi^*\xi)}{| \xi|_i}\leq L, \ \forall \xi\in T^*\mathbb{B}^n\backslash\{0\},
	\end{equation}
	\begin{equation}\label{distconleuri}
		|\eta_i(x_1)-\eta_i(x_2)|\leq L\, d_F(x_1,x_2),\ \forall\, x_1,x_2\in M,\ \max_{x\in U}F^*({\dd} \eta_i)\leq \sqrt{L},\ \lambda_F(\mathscr{K})\leq \sqrt{L},
	\end{equation}
	and
	\begin{equation}\label{coordinateestimes}
		L^{-1} d_F(x_1,x_2)\leq |\varphi_i(x_1)-\varphi_i(x_2)|_i\leq L \, d_F(x_1,x_2),
	\end{equation}
	for every $x_1,x_2\in V_i$ and $ L^{-1}{\dd}x_i\leq \dm|_{V_i}\leq L\,{\dd}x_i,$ where $|\cdot|_i$ and ${\dd}x_i$ denote  the Euclidean norm  and the Lebesgue measure on $\varphi_i(V_i)=\mathbb{B}^n$, respectively.

	According to Lemma \ref{localtogloaLip} and \eqref{coordinateestimes}$_1$ (i.e., the first inequality in \eqref{coordinateestimes}),
	it is easy to check that $h_i:=(\eta_i u)\circ \varphi_i^{-1}$ is a $C_1$-Lipschitz function in $\mathbb{B}^n$ with compact support, where
	\[
	C_1:=\max_{1\leq i\leq N}\dil(h_i)<+\infty.
	\]
	Due to \textit{Step 1}, for each $k$ and $\epsilon>0$, there exists a $C_1$-Lipschitz function $h_{i,\epsilon}\in C^\infty_0(\mathbb{B}^n)$ such that
	\begin{equation}\label{hkeconverge}
		h_{i,\epsilon}\rightrightarrows h_i,\quad \|h_{i,\epsilon} -h_i\|_{W^{1,p}(\mathbb{B}^n)}\rightarrow 0,\quad \text{as }\epsilon\rightarrow 0^+.
	\end{equation}
	Moreover, by the uniform convergence we may assume $|h_{i,\epsilon}|\leq C_1$, by choosing a larger $C_1$ if necessary.
	
	Consider $h_{i,\epsilon}\circ\varphi_i\in C^\infty_0(V_i)$; due to \eqref{coordinateestimes}$_1$, they are $C_1L$-Lipschitz functions.
	Now set
	\[
	u_\epsilon:=\sum_{i=1}^N  h_{i,\epsilon}\circ \varphi_i\in C^\infty_0(M),
	\]
	which is a $NC_1L$-Lipschitz function  with $|u_\epsilon|\leq NC_1$ and $\supp u_\epsilon\subset \cup_{i=1}^N V_i\subset  \mathscr{K}$. Moreover, a direct calculation gives
	\begin{align*}
		\left| u_\epsilon-u   \right|&=\left| \sum_{i=1}^N  h_{i,\epsilon}\circ \varphi_i-\sum_{i=1}^N  h_{i}\circ \varphi_i \right|\leq \sum_{i=1}^N  \left| h_{i,\epsilon}\circ \varphi_i-h_{i}\circ \varphi_i  \right|,
	\end{align*}
	which together with \eqref{hkeconverge} and \eqref{coordinateestimes}$_2$ furnishes $u_\epsilon\rightrightarrows u$ and $\|u_\epsilon- u\|_{L^p_{\m}}\rightarrow 0$. In addition, \eqref{normconrraoll} yields
$$
		F^*\left( {\dd} u_\epsilon-{\dd} u  \right)\leq \sum_{i=1}^NF^*\left( {\dd} (  h_{i,\epsilon}\circ \varphi_i)- {\dd} (  h_{i}\circ \varphi_i)   \right)\leq L\sum_{i=1}^N   \left| {\dd}  h_{i,\epsilon}- {\dd} h_{i}   \right|_i,
	$$
	which combined with \eqref{hkeconverge}$_2$ and \eqref{coordinateestimes}$_2$   implies  $$\|u_\epsilon-u \|_{W^{1,p}_{\m}}\rightarrow 0.$$ Furthermore, if $u\geq 0$, then $h_i\geq 0$ and hence, $h_{i,\epsilon}\geq 0$  due to \textit{Step 1}. Therefore, $u_\epsilon\geq 0$.
	We are done by letting $\epsilon=1/l$ and $C:=NC_1L$.
\end{proof}

\section{Failure of functional inequalities: proof of Theorem \ref{mainThm1}}\label{influeofS1}

In this section we  prove Theorem \ref{mainThm1}, considering in separate subsections the items (i)-(iv). Before doing this, we present an auxiliary result which will be useful in several steps.

\begin{lemma}\label{lemma_u^p_r^p}
	Let $(M,F,\m)$ be an $n$-dimensional forward complete $\FMMM$ and let $h\geq k\geq 0$ be constants. Suppose that there exists a point $o\in M$ such that
	\[
	\mathbf{Ric}(\nabla r)\geq -(n-1)k^2, \quad \mathbf{S}(\nabla r) \geq (n-1) h,
	\]
	where $r(x):=d_F(o,x)$.
	Let $u_\alpha(x):=-e^{-\alpha r(x)}$ for  $\alpha>0$. Then the following statements hold$:$
	\begin{enumerate}[\rm (i)]
		\item \label{lemma_u^p_r^p_i} $\|u_\alpha\|_{W^{1,p}_{\m}}<+\infty$ for any  $p\in [1,\infty);$
		
		\smallskip
		
		\item \label{lemma_u^p_r^p_ii}  $\ds\int_M \frac{|u_{\alpha}|^p}{r^p} \dm <  +\infty$   for any $p \in [1, n);$
		
		\smallskip
		
		\item \label{lemma_u^p_r^p_iii}  if $k>0$ then for any $p\in [1,\infty),$
		\begin{equation}\label{boundeduandF}
			\int_M |u_\alpha|^p \dm\leq \alpha^{-1}\frac{\mathscr{I}_{\m}(o)}{p k^{n-1}},\quad  \int_M {F}^{*p}({\dd}u_\alpha) \dm\leq \alpha^{p-1}\frac{\mathscr{I}_{\m}(o)}{pk^{n-1}},
		\end{equation}
		where $\mathscr{I}_{\m}(o)$ is defined by \eqref{cocont}.
	\end{enumerate}
\end{lemma}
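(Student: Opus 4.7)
The overall plan is to compute everything in the polar coordinate system $(r,y)$ around $o$, using the disintegration \eqref{volumeexprsso} in the form $\dm=\hat{\sigma}_o(r,y)\,\dd r\wedge \dd\nu_o(y)$. The crucial preliminary observation is that, by the eikonal identity $F^{*}(\dd r)=1$ $\m$-a.e.\ and the positive $1$-homogeneity of $F^{*}$,
\[
\dd u_\alpha=\alpha\,e^{-\alpha r}\,\dd r,\qquad F^{*p}(\dd u_\alpha)=\alpha^p\,e^{-p\alpha r}=\alpha^p|u_\alpha|^p\quad \m\text{-a.e.,}
\]
so $\int_M F^{*p}(\dd u_\alpha)\,\dm = \alpha^p\int_M |u_\alpha|^p\,\dm$. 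Consequently all three parts reduce to controlling the scalar integral $\int_M e^{-p\alpha r}\,\dm$ (or a weighted version of it), which I would estimate via the key comparison \eqref{st11ong} from Remark \ref{impvoluremark}:
\[
\hat{\sigma}_o(r,y)\leq e^{-\tau(y)-(n-1)hr}\,\mathfrak{s}_{-k^2}^{n-1}(r),\quad r\in(0,i_y).
\]

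For part (iii), with $k>0$, I would use $\mathfrak{s}_{-k^2}(r)=\sinh(kr)/k\leq e^{kr}/k$, so the comparison becomes $\hat{\sigma}_o(r,y)\leq k^{-(n-1)}e^{-\tau(y)}e^{-(n-1)(h-k)r}$. Passing to polar coordinates and using $h\geq k$,
\[
\int_M |u_\alpha|^p\,\dm\leq \frac{\mathscr{I}_{\m}(o)}{k^{n-1}}\int_0^\infty e^{-[p\alpha+(n-1)(h-k)]r}\,\dd r\leq \frac{\mathscr{I}_{\m}(o)}{p\alpha\,k^{n-1}},
\]
which is the first inequality of \eqref{boundeduandF}; multiplying through by $\alpha^p$ delivers the second.

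Part (i) then follows directly from (iii) whenever $k>0$; for $k=0$ I would instead use $\mathfrak{s}_0(r)=r$, so the radial integral becomes $\int_0^\infty r^{n-1}e^{-[p\alpha+(n-1)h]r}\,\dd r$, a standard $\Gamma$-type expression, finite for every $p\in[1,\infty)$. For part (ii) the new issue is the singular weight $r^{-p}$ at the origin, so I would split at some $\varepsilon_0\in(0,\mathfrak{i}_o)$ small enough that $\hat{\sigma}_o(r,y)\leq 2e^{-\tau(y)}r^{n-1}$ for $r<\varepsilon_0$ (possible by \eqref{volumerzero}). On $B_o^{+}(\varepsilon_0)$ I would bound $|u_\alpha|^p\leq 1$ and reduce to $\int_0^{\varepsilon_0}r^{n-1-p}\,\dd r$, finite precisely because $p<n$; on $M\setminus B_o^{+}(\varepsilon_0)$ I would use $r^{-p}\leq \varepsilon_0^{-p}$ and reduce to $\|u_\alpha\|_{L^p_\m}^p$, already controlled by (i).

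The only genuine technical point is justifying that integrals over $M$ may be rewritten as iterated integrals over $\{(r,y):y\in S_oM,\,0<r<i_y\}$; this is standard because the cut locus is $\m$-null and the disintegration \eqref{volumeexprsso} holds on its complement. No hard step is expected: the whole argument is a direct application of the volume comparison \eqref{st11ong}, the near-origin asymptotic \eqref{volumerzero}, and the eikonal identity for $F^{*}(\dd r)$.
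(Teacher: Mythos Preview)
Your proposal is correct and follows essentially the same route as the paper: polar coordinates, the eikonal identity $F^*(\dd r)=1$, the key density bound \eqref{st11ong}, the estimate $\mathfrak{s}_{-k^2}(r)\le e^{kr}/k$ for part~\eqref{lemma_u^p_r^p_iii}, and the near-origin asymptotic \eqref{volumerzero} to handle the $r^{-p}$ singularity in part~\eqref{lemma_u^p_r^p_ii}. The only cosmetic difference is that for \eqref{lemma_u^p_r^p_ii} with $k=0$ the paper computes the Gamma integral $\int_0^\infty e^{-p\alpha r}r^{n-1-p}\,\dd r$ directly rather than splitting, but your uniform split-and-reduce-to-(i) argument works just as well.
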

\begin{proof} Let $(r,y)$ be the polar coordinate system around $o\in M$. According to \eqref{st11ong}, we have
	\begin{equation}\label{h_order_fisrt}
		\hat{\sigma}_o(r,y)\leq  e^{-\tau(y)-(n-1)hr}\mathfrak{s}_{-k^2}^{n-1}(r), \quad \forall\,r\in (0,i_y), \ \forall\,y\in S_oM.
	\end{equation}
	The proof is divided into two cases.
	
	\smallskip
	
	\textit{Case 1.}  Suppose  $k=0$, in which case  $\mathfrak{s}_{-k^2}(r)=r$ and $h\geq 0$.
	Then \eqref{h_order_fisrt} combined with  \eqref{volumeexprsso} yields
	\begin{align}
		\int_M |u_\alpha|^p \dm\leq & \int_{S_oM}  \left( \int^{i_y}_0 e^{-\tau(y)} e^{-p\alpha r-(n-1)hr}\mathfrak{s}_{-k^2}^{n-1}(r) {\dd}r  \right) {\dd}\nu_o(y)\notag\\
		\leq & \left(\int_{S_oM}e^{-\tau(y)}{\dd}\nu_o(y) \right)\left( \int^{+\infty}_0 e^{-p\alpha r-(n-1)hr}\mathfrak{s}_{-k^2}^{n-1}(r) {\dd}r \right)\nonumber\\=&   \mathscr{I}_{\m}(o)  \int^{+\infty}_0 e^{-p\alpha r-(n-1)hr} r^{n-1}{\dd}r\label{numcontroll}\\
		\leq &  \mathscr{I}_{\m}(o) \int^{+\infty}_0e^{-p\alpha r} r^{n-1} {\dd}r \\=&\mathscr{I}_{\m}(o) \frac{(n-1)!}{(p \alpha)^n} < +\infty.\notag
	\end{align}
	In addition, by the eikonal equation $F^*({\dd}r)=1$ we have ${F}^{*}({\dd}u_\alpha)=\alpha e^{-\alpha r}=\alpha|u_\alpha|$, which  implies
	\begin{align}\label{Fstarplimited_Kzero}
		\int_M {F}^{*p}({\dd}u_\alpha) \dm=\alpha^p \int_M|u_\alpha|^p\dm \leq \mathscr{I}_{\m}(o) \frac{(n-1)!}{p^n\alpha^{n-p}} < + \infty.
	\end{align}
	Hence, \eqref{lemma_u^p_r^p_i} follows. Furthermore, when $p\in [1,n)$,
	a similar argument yields
	\begin{align*}
		\int_M \frac{|u_\alpha|^p}{r^p} \dm =& \int_{M}  e^{- p \alpha r} r^{-p} \dm \leq  \mathscr{I}_{\m}(o) \int^{+\infty}_{0}  e^{- p \alpha r -(n-1)hr} r^{n-1-p} {\dd}r
		\\ \leq &\mathscr{I}_{\m}(o) \int^{+\infty}_{0}  e^{- p \alpha r} r^{n-1-p} {\dd}r= \mathscr{I}_{\m}(o) \frac{\Gamma(n-p)}{(p \alpha)^{n-p}}<+\infty,
		%\\
		%&\leq  \mathscr{I}_{\m}(o) \int^{1}_{0} r^{n-1-p} {\dd}r + \mathscr{I}_{\m}(o) \int^{+\infty}_{1}  e^{- p \alpha r} r^{n-1-\lfloor p\rfloor }   {\dd}r= \frac{\mathscr{I}_{\m}(o)}{n-p}  +  \mathscr{I}_{\m}(o) \frac{(n-1-\lfloor p \rfloor)!}{(p \alpha)^{n-\lfloor p\rfloor} }
		% < +\infty,
	\end{align*}
	which gives \eqref{lemma_u^p_r^p_ii}.

	\smallskip
	
	\textit{Case 2.}  Suppose $k>0$, in which case $\mathfrak{s}_{-k^2}(r)=\frac{\sinh kt}{k}$.
	A computation analogous to  \eqref{numcontroll} together with $h \geq k$ furnishes
	\begin{align*}
		\int_M |u_\alpha|^p \dm\leq & \left(\int_{S_oM}e^{-\tau(y)}{\dd}\nu_o(y) \right)\left( \int^{+\infty}_0 e^{-p\alpha r-(n-1)hr}\mathfrak{s}_{-k^2}^{n-1}(r) {\dd}r \right)\label{removeiyres}\\
		\leq &\frac{ \mathscr{I}_{\m}(o)}{k^{n-1}}  \int^{+\infty}_0 e^{-p\alpha r-(n-1)hr+(n-1)kr}{\dd}r\leq \frac{ \mathscr{I}_{\m}(o)}{k^{n-1}} \int^{+\infty}_0e^{-p\alpha r}{\dd}r=\frac{\mathscr{I}_{\m}(o)}{p\alpha k^{n-1}},\notag
	\end{align*}
	which also yields
	\begin{align*}
		\int_M {F}^{*p}({\dd}u_\alpha) \dm=\alpha^p \int_M|u_\alpha|^p\dm\leq \frac{\mathscr{I}_{\m}(o)}{pk^{n-1}}\alpha^{p-1}.
	\end{align*}
	Therefore, both \eqref{lemma_u^p_r^p_i} and \eqref{lemma_u^p_r^p_iii} hold.
	To prove \eqref{lemma_u^p_r^p_ii}, we need the following estimates of $\hat{\sigma}_o(r,y)$.
	On the one hand, \eqref{h_order_fisrt}  yields
	\[
	\hat{\sigma}_o(r,y)
	\leq  \frac{e^{-\tau(y)-(n-1)(h-k)r}}{(2k)^{n-1}}, \quad \forall\,r\in (0,i_y), \ \forall\,y\in S_oM.
	\]
	On the other hand,   (\ref{volumerzero}) guarantees a small $\varepsilon_0\in (0,\mathfrak{i}_o)$ such that
	\[
	\hat{\sigma}_o(r,y) \leq 2 e^{-\tau(y)}r^{n-1}, \quad \forall\,  r \in (0, \varepsilon_0), \ \forall\,y\in S_oM.
	\]
	Hence, for any $p\in [1, n)$, an argument similar to \eqref{numcontroll} yields
	\begin{align*}
		\int_M \frac{|u_\alpha|^p}{r^p}\dm
		& =  \int_{B^+_{o}(\varepsilon_0)} e^{- p \alpha r}r^{-p}\dm + \int_{M \setminus B^+_{o}(\varepsilon_0)}  e^{- p \alpha r} r^{-p} \dm \\
		& \leq  2\mathscr{I}_{\m}(o) \int^{\varepsilon_0}_0   r^{n-1 - p} {\dd}r +\frac{\mathscr{I}_{\m}(o)}{(2k)^{n-1}}  \int^{+\infty}_{\varepsilon_0}  e^{- p \alpha r -(n-1)(h-k)r}  r^{-p} {\dd}r  \\
		%& \leq  \frac{2 \mathscr{I}_{\m}(o)}{n - p}  r^{n-p} \Big|^{\varepsilon_0}_0 + \frac{\mathscr{I}_{\m}(o)}{(2k)^{n-1}} \varepsilon_0^{-p} \int^{+\infty}_{\varepsilon_0} e^{- p \alpha r-(n-1)(h-k)r} {\dd}r \\
		& \leq  \frac{2 \mathscr{I}_{\m}(o)}{n - p} \varepsilon_0^{n-p} + \frac{\mathscr{I}_{\m}(o)}{(2k)^{n-1}}\varepsilon_0^{-p} \int^{+\infty}_{\varepsilon_0} e^{- p \alpha r } {\dd}r  < +\infty,
	\end{align*}
	which concludes the proof.
\end{proof}

\begin{remark}
Under the assumption of Lemma \ref{lemma_u^p_r^p}, one can prove in a standard manner that  $u_\alpha\in W_0^{1,p}(M,\m)$; since this fact is not used explicitly, we omit its proof.
\end{remark}

\subsection{Vanishing first eigenvalue of $p$-Laplacian}
%In this section, let $(M,F,\m)$ be  an $n$-dimensional forward complete noncompact $\FMMM$.
In this subsection we will study the first eigenvalue of the $p$-Laplacian in the Finsler setting.
According to \cite{ShenLecture},    the {\it divergence} of a smooth vector field $X$ with respect to $\m$ is defined as
\[
\Div_{\m}(X)\dm:={\dd}(X\rfloor \dm).
\]
Given $p\in (1,\infty)$ and $u\in C^\infty(M)$, the {\it $p$-Laplacian  of $u$ } (with respect to $\m$) is defined by
\begin{equation}\label{plapc1}
 \Delta_{p, \m} u := \Div_{\m}\left(F^{p-2}(\nabla  u)  \nabla  u\right)
 \end{equation}
on $\mathcal {U}:=\{x\in M\,:\, {\dd}{u}|_x\neq 0\}$. Moreover, if $u\in W^{1,p}_{\loc}(M)$,
  the {\it $p$-Laplacian  of $u$ in the weak sense} (with respect to $\m$) is defined by
\begin{equation}\label{plapc2}
\int_M v \Delta_{p, \m} u \dm = - \int_M F^{p-2}(\nabla u)\langle\nabla u, {\dd}v \rangle \dm,
\end{equation}
 for any $v\in C^{\infty}_0 (M)$. In particular, the Laplacian $\Delta_{\m} u$ is exactly $\Delta_{2, \m}u$.

%For this purpose, we need the following results.

%{\color{blue}(I suggest that we should combine  Lemma \ref{functionlpnrombounded} with Lemma \ref{lemma_up/rp})}

%By (\ref{volumecompareapproxi_1})
%\[
%\dm|_{(r,y)} \leq e^{-\tau(\gamma'_{y}(t))} \frac{e^{(n-1)kr}}{2 k^{n-1}}  {\dd}r {\dd}\nu_o(y).
%\]

Let $p\in [1,\infty)$ and $(M,F,\m)$ be an $n$-dimensional forward complete $\FMMM$; for further use, let us consider
\[
\Lambda_p(\cdot): = \frac{\ds\int_M {F}^{*}({\dd}\,\cdot\,)^p  \dm}{\ds\int_M |\cdot|^p \dm}.
\]

\begin{lemma}\label{Eigenvaluelemma} Let $(M,F,\m)$ be an $n$-dimensional forward complete $\FMMM$ and let $h\geq k\geq 0$ be constants. Assume that there exists some point $o\in M$ such that
\[
\mathbf{Ric}(\nabla r)\geq -(n-1)k^2, \quad \mathbf{S}(\nabla r) \geq (n-1) h,
\]
with $r(x):=d_F(o,x)$. Then
for any $p\in [1, \infty)$,
\begin{equation}\label{hardfail_forEigenvalue}
\inf_{u\in C^\infty_0(M)\backslash\{0\}} \Lambda_p(u)\leq \liminf_{\alpha\rightarrow 0^+}\Lambda_p(u_{\alpha}),
\end{equation}
where  $u_{\alpha}:=-e^{-\alpha r}$ for $\alpha>0$.
\end{lemma}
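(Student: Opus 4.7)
The plan is to show that for each fixed $\alpha > 0$, the test function $u_\alpha = -e^{-\alpha r}$ can be approximated by members of $C_0^\infty(M)$ whose Rayleigh-type quotients $\Lambda_p$ converge to $\Lambda_p(u_\alpha)$; the conclusion will then follow by passing to the $\liminf$ as $\alpha \to 0^+$. The obstruction is that $u_\alpha$ is neither compactly supported (it decays only exponentially) nor smooth (since $r$ is only Lipschitz on the cut locus of $o$), so the approximation will be carried out in two steps: truncate to a Lipschitz function with compact support, then mollify via Proposition \ref{lipsconverppax-prop}.

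For the truncation, fix a piecewise linear cutoff $\chi_R \colon [0,\infty) \to [0,1]$ with $\chi_R \equiv 1$ on $[0,R]$, $\chi_R \equiv 0$ on $[R+1,\infty)$, and $\chi_R' \in [-1,0]$, and set $v_{\alpha,R}(x) := u_\alpha(x)\,\chi_R(r(x)) \in \Lip_0(M)$, which is supported in $\overline{B^+_o(R+1)}$. A direct computation using $du_\alpha = \alpha e^{-\alpha r}\,dr$ gives
\[
dv_{\alpha,R} = \bigl[\alpha\,\chi_R(r) - \chi_R'(r)\bigr]\,e^{-\alpha r}\,dr
\]
$\m$-almost everywhere on $M$, and crucially the bracketed coefficient is \emph{nonnegative} because $\chi_R \geq 0$ and $\chi_R' \leq 0$. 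Hence by the positive $1$-homogeneity of $F^*$ and the eikonal equation $F^*(dr) = 1$,
\[
F^*(dv_{\alpha,R}) = \bigl[\alpha\,\chi_R(r) - \chi_R'(r)\bigr]\,e^{-\alpha r} \leq (\alpha + 1)\,|u_\alpha|
\]
pointwise on $M$. This observation neatly bypasses the irreversibility of $F^*$: we never have to evaluate $F^*$ on $-dr$, whose value $\lambda_{F^*}(x)$ may be unbounded as $x$ ranges over $M$.

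Combining the pointwise bound with the integrability provided by Lemma \ref{lemma_u^p_r^p}, dominated (resp.\ monotone) convergence yields $\int_M |v_{\alpha,R}|^p\,\dm \to \int_M |u_\alpha|^p\,\dm$ and $\int_M F^{*p}(dv_{\alpha,R})\,\dm \to \alpha^p \int_M |u_\alpha|^p\,\dm = \int_M F^{*p}(du_\alpha)\,\dm$ as $R \to \infty$, so $\Lambda_p(v_{\alpha,R}) \to \Lambda_p(u_\alpha)$. For the mollification step, Proposition \ref{lipsconverppax-prop} produces a sequence $(w_l) \subset C_0^\infty(M)$ with common compact support $\mathscr{K} \supset \supp(v_{\alpha,R})$ such that $\|w_l - v_{\alpha,R}\|_{W^{1,p}_{\m}} \to 0$; since $\mathscr{K}$ is compact we have $\lambda_{F^*}(\mathscr{K}) < \infty$, so Minkowski's inequality applied to the positively homogeneous subadditive map $F^*$ gives $\int_M |w_l|^p\,\dm \to \int_M |v_{\alpha,R}|^p\,\dm$ and $\int_M F^{*p}(dw_l)\,\dm \to \int_M F^{*p}(dv_{\alpha,R})\,\dm$, i.e., $\Lambda_p(w_l) \to \Lambda_p(v_{\alpha,R})$. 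A diagonal argument then yields $\inf_{u \in C_0^\infty(M)\setminus\{0\}} \Lambda_p(u) \leq \Lambda_p(u_\alpha)$ for every $\alpha > 0$, and taking $\liminf_{\alpha \to 0^+}$ completes the proof.

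The main obstacle, and the reason for the particular choice of test function, is the irreversibility of $F^*$: a naive truncation would produce cotangent components along $-dr$, on which $F^*(-dr) = \lambda_{F^*}(x)$ may fail to be globally bounded (indeed, by Theorem \ref{reversibinfite} the reversibility is typically infinite under the hypotheses of Theorem \ref{mainThm1}). The sign convention $u_\alpha = -e^{-\alpha r}$ (so that $u_\alpha$ is \emph{increasing} in $r$) is precisely what guarantees that $d(u_\alpha\,\chi_R(r))$ remains a nonnegative multiple of $dr$ even in the cutoff annulus, producing a clean pointwise estimate on $F^*(dv_{\alpha,R})$ that is independent of any reversibility bound.
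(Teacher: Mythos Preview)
Your argument is correct and follows essentially the same two-stage strategy as the paper: truncate $u_\alpha$ to a compactly supported Lipschitz function whose differential remains a nonnegative multiple of $dr$ (so that only $F^*(dr)=1$ is needed and never $F^*(-dr)$), then pass to $C^\infty_0(M)$ via Proposition~\ref{lipsconverppax-prop} using the finite reversibility on a fixed compact set. The difference is purely in the truncation device: the paper uses the vertical shift $\tilde u_{\alpha,\varepsilon}=\min\{0,\,u_{\alpha,\varepsilon}+\varepsilon\}$ (together with a flattening near $o$) and obtains monotone pointwise convergence $|\tilde u_{\alpha,\varepsilon}|\nearrow |u_\alpha|$, $F^*(d\tilde u_{\alpha,\varepsilon})\nearrow F^*(du_\alpha)$, whereas you use the multiplicative cutoff $v_{\alpha,R}=u_\alpha\,\chi_R(r)$ and dominated convergence via the pointwise bound $F^*(dv_{\alpha,R})\le(\alpha+1)|u_\alpha|$. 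Both constructions exploit the same key fact---that $u_\alpha=-e^{-\alpha r}$ is increasing in $r$---and both rely on Lemma~\ref{lemma_u^p_r^p} for the required integrability; your explicit remark on why the sign convention circumvents the (possibly infinite) global reversibility is a nice clarification of what is implicit in the paper's construction.
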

\begin{proof} We divide the proof into three steps.

\smallskip

\textit{Step 1.} In this step, we are going to show that
\[
\inf_{u\in C^\infty_0(M)\backslash\{0\}} \Lambda_p(u) = \inf_{u \in \Lip_0(M)\backslash\{0\}} \Lambda_p(u).
\]
Since $C^\infty_0(M) \subset \Lip_0(M)$, it is obvious that
\begin{equation*}\label{EigenJuJv_1}
\inf_{u \in \Lip_0(M)\backslash\{0\}} \Lambda_p(u)\leq \inf_{u\in C^\infty_0(M)\backslash\{0\}} \Lambda_p(u).
\end{equation*}
For the reverse inequality, it suffices to show that for any $u\in \Lip_0(M)\backslash\{0\}$, there exists a sequence $u_l\in C^\infty_0(M)\backslash\{0\}$  such that
\begin{equation}\label{Eigenreverinverhar1}
\lim_{l\rightarrow \infty} \Lambda_p(u_l) = \Lambda_p(u).
\end{equation}

Given $u \in \Lip_0(M)$,  Proposition \ref{lipsconverppax-prop} provides
  a sequence of  functions $(u_l) \subset C^\infty_0(M)\cap \Lip_0(M)$  satisfying
\begin{equation}\label{threeresults_forEigenvalue}
\supp u\cup \supp u_l\subset \mathscr{K}, \quad u_l\rightrightarrows u,\quad \|u_l - u \|_{W^{1,p}_{\m}}\rightarrow 0,
\end{equation}
where $\mathscr{K}$ is some compact set in $M$.
The uniform convergence in \eqref{threeresults_forEigenvalue} implies
\begin{equation}\label{lim_u/r_forEigenvalue}
\lim_{l\rightarrow \infty} \int_{M} |u_l|^p \dm  = \int_{M}  \lim_{l\rightarrow \infty}|u_l|^p \dm = \int_{M}  |u|^p  \dm.
\end{equation}
The $W^{1,p}$-convergence in (\ref{threeresults_forEigenvalue}) combined with \eqref{W1p} yields
\begin{equation}\label{F_du_l_du_forEigenvalue}
\|{\dd} u_l - {\dd} u\|_{L^p_{\m}}\leq   \|u_l - u\|_{W^{1,p}_{\m}}  \rightarrow 0, \quad  \text{ as } l\rightarrow \infty.
 \end{equation}
As  $\supp u\cup \supp u_l\subset \mathscr{K}$ and  $\lambda_{F^*}(\mathscr{K}) < +\infty$, we have
\begin{align}\label{F_du_du_l_forEigenvalue}
\lim_{l\rightarrow \infty} \|{\dd} u - {\dd} u_l\|_{L^p_{\m}}
\leq \lambda_{F^*}(\mathscr{K})\lim_{n\rightarrow \infty} \|{\dd} u_l - {\dd} u\|_{L^p_{\m}}=0,
\end{align}
Now the triangle inequality for $F^*$ implies that
\begin{equation*}
  \|{\dd} u_l\|_{L^p_{\m}}  \leq \|{\dd} u_l - {\dd} u\|_{L^p_{\m}} + \|{\dd} u \|_{L^p_{\m}}, \quad  \|{\dd} u \|_{L^p_{\m}}  \leq \|{\dd} u - {\dd} u_l\|_{L^p_{\m}} + \|{\dd} u_l\|_{L^p_{\m}},
\end{equation*}
which combined with  (\ref{F_du_l_du_forEigenvalue}) and (\ref{F_du_du_l_forEigenvalue}) yields
\begin{equation}\label{Flimivanishes_forEigenvalue}
 \int_{M} F^{* p}({\dd} u) \dm =\|{\dd}u\|^p_{L^p_{\m}}= \lim_{l\rightarrow \infty}\|{\dd}u_l\|^p_{L^p_{\m}}= \lim_{l\rightarrow \infty} \int_{M} F^{* p}( {\dd} u_l) \dm.
\end{equation}
Then (\ref{Eigenreverinverhar1}) follows directly by (\ref{lim_u/r_forEigenvalue}) and \eqref{Flimivanishes_forEigenvalue}.

\smallskip

\textit{Step 2.} In this step, we prove that for every $u_{\alpha}:=-e^{-\alpha r}$ with  $\alpha>0$, there exists a sequence $(v_{\alpha, l}) \subset \Lip_0(M)$ such that
\begin{equation}\label{FpStep2main_forEigenvalue}
\Lambda_p (u_{\alpha}) = \lim_{l\rightarrow \infty} \Lambda_p(v_{\alpha, l}).
\end{equation}

If $\sup_{x\in M}r(x)<+\infty$, then $M$ is compact and $u_{\alpha}\in  \Lip_0(M)$. Hence, \eqref{FpStep2main_forEigenvalue} is trivial by choosing $v_{\alpha, l}=u_\alpha$.
In the sequel, we consider the case when $\sup_{x\in M}r(x)=+\infty$, i.e., $M$ is noncompact.
For every small $\varepsilon\in (0,\min\{1,\mathfrak{i}_o\})$ and $\delta\in (0,e^{-\alpha })$, set
\[
u_{\alpha, \varepsilon}(x):=
\begin{cases}
-e^{-\alpha\varepsilon}, &\text{ if }  r(x) < \varepsilon;\\
\\
-e^{-\alpha r(x)}, &\text{ if }   r(x) \geq \varepsilon,
\end{cases}\qquad u_{\alpha, \varepsilon,\delta}:=\min\{0, u_{\alpha, \varepsilon} +\delta\}.
\]

On the one hand, we claim  that $u_{\alpha, \varepsilon,\delta}\in \Lip_0(M)$. In fact,
it is easy to check that
\[
\supp u_{\alpha, \varepsilon,\delta}=\overline{\{u_{\alpha, \varepsilon,\delta}<0\}}=\overline{B^+_o\left( -\frac{\ln\delta}{\alpha}\right)},
\]
which is a compact set. In addition, since the construction implies
\[
 F^{*}(\pm {\dd} u_{\alpha, \varepsilon,\delta})\leq \max_{x\in \overline{B^+_o( -\alpha^{-1}\ln\delta)}} \max\{F^{*}(\pm{\dd} u_{\alpha}|_x)\}=:C<+\infty,
\]
 an argument similar to the proof of Lemma \ref{localtogloaLip} yields
\[
\left|u_{\alpha, \varepsilon,\delta}(x_1)-u_{\alpha, \varepsilon,\delta}(x_2)\right|\leq C d_F(x_1,x_2), \quad \forall\,x_1,x_2\in M,
\]
which proves the claim.

On the other hand, set $\tilde{u}_{\alpha, \varepsilon}:=u_{\alpha, \varepsilon,\varepsilon}\in \Lip_0(M)$ for every $\varepsilon\ll 1$ and $v_{\alpha,l}:=\tilde{u}_{\alpha, 1/l}\in \Lip_0(M)$.
For \eqref{FpStep2main_forEigenvalue}, we only need to show
\begin{equation}\label{FpStep2main_forEigenvalue2}
\Lambda_p (u_{\alpha}) = \lim_{\varepsilon\rightarrow 0^+} \Lambda_p(\tilde{u}_{\alpha, \varepsilon}).
\end{equation}
The above construction implies the following pointwise convergence
\begin{equation}\label{varisonpoiconve_forEigenvalue}
  |\tilde{u}_{\alpha, \varepsilon}|\nearrow |u_\alpha|, \quad F^*({\dd}\tilde{u}_{\alpha, \varepsilon})\nearrow F^*({\dd}u_\alpha), \quad \text{  for $\m$-a.e.,}
\end{equation}
as $\varepsilon\to 0^+.$
Moreover, Lemma \ref{lemma_u^p_r^p}/\eqref{lemma_u^p_r^p_i} implies that $|u_{\alpha}|,F^{*}({\dd} u_{\alpha})\in L^p(M,\m)$, which together with the dominated convergence theorem and  \eqref{varisonpoiconve_forEigenvalue} yields
\begin{equation}\label{FpStep2part1_forEigenvalue}
\lim_{\varepsilon\rightarrow 0^+} \int_{M} |\tilde{u}_{\alpha, \varepsilon}|^p \dm = \int_{M} |u_{\alpha}|^p \dm,\quad \lim_{\varepsilon \rightarrow 0^+} \int_{M} F^{*p}( {\dd}\tilde{u}_{\alpha, \varepsilon}) \dm =   \int_{M} F^{*p}( {\dd} u_{\alpha}) \dm;
\end{equation}
then the limit in  \eqref{FpStep2main_forEigenvalue2} follows.

\smallskip

\textit{Step 3.} We now conclude the proof. For any $\alpha>0$ and $l\in \mathbb{N}$,
 \textit{Steps 1} and \textit{2}   yield
\[
\inf_{u\in C^\infty_0(M)} \Lambda_p(u)=\inf_{u\in \Lip_0(M)} \Lambda_p(u)\leq \Lambda (v_{\alpha,l}).
\]
By letting first $l\rightarrow \infty$ and then  $\alpha\rightarrow 0^+$,  and using \eqref{FpStep2main_forEigenvalue}, we get
\[
\inf_{u\in C^\infty_0(M)} \Lambda_p(u)\leq \liminf_{\alpha\rightarrow 0^+}\lim_{l\rightarrow \infty } \Lambda_p(v_{\alpha,l})=\liminf_{\alpha\rightarrow 0^+}\Lambda_p (u_{\alpha}),
\]
which is exactly \eqref{hardfail_forEigenvalue}.
\end{proof}

In view of \eqref{plapc1} and \eqref{plapc2}, the {\it first eigenvalue} of the $p$($>1$)-Laplace operator $\Delta_{p,\m}$  is defined as
\begin{equation} \label{lambda1}
 \lambda_{1, p,\m} (M)  = \inf_{u\in C^\infty_0(M)\backslash \{ 0 \} } \frac{\ds\int_{M} {F}^{*p }({\dd}u) \dm}{\ds\int_{M} |u|^p \dm}=\inf_{u\in C^\infty_0(M)\backslash \{ 0 \} } \Lambda_p(u).
\end{equation}
We have the following result, which proves a slightly more general version as Theorem \ref{mainThm1}/(i):

\begin{theorem}\label{Thm_Eigenvalue}
%Let $(M,F,\m)$ be  an $n$-dimensional forward complete $\FMMM$ with
%\begin{enumerate}[{\rm (i)}]
%\item \label{fistk=0_forEigenvalue}
%$
%\mathbf{Ric}\geq  0, \quad \mathbf{S} \geq (n-1) h \geq 0;
%$
%or
%\item  \label{fistkpositive_forEigenvalue}
%$
%\mathbf{Ric}\geq -(n-1)k^2 >0, \quad \mathbf{S} \geq (n-1) h > 0, \quad  0 < k\leq h.
%$
%\end{enumerate}
Let $(M,F,\m)$ be an $n$-dimensional forward complete $\FMMM$ and let $h\geq k\geq 0$ be constants. If there exists some point $o\in M$ such that
\[
\mathbf{Ric}(\nabla r)\geq -(n-1)k^2, \quad \mathbf{S}(\nabla r) \geq (n-1) h,
\]
where $r(x):=d_F(o,x)$,
then $\lambda_{1,p, \m} (M)  = 0$ for all $p\in (1,\infty)$.
\end{theorem}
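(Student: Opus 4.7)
The plan is to combine Lemma \ref{Eigenvaluelemma} with an explicit computation of the Rayleigh quotient $\Lambda_p$ along the family of Gaussian-type bubbles $u_\alpha = -e^{-\alpha r}$, $\alpha>0$, and then let $\alpha\to 0^+$. Since $\lambda_{1,p,\m}(M) = \inf_{u\in C^\infty_0(M)\setminus\{0\}}\Lambda_p(u) \geq 0$ trivially, it suffices to produce test functions driving $\Lambda_p$ to $0$. Lemma \ref{Eigenvaluelemma} already reduces this to proving
\[
\liminf_{\alpha\to 0^+}\Lambda_p(u_\alpha)=0,
\]
so the entire problem boils down to evaluating $\Lambda_p(u_\alpha)$ for each fixed $\alpha$.

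The key observation is that the eikonal equation $F^*({\dd}r)=1$ (valid $\m$-a.e.\ on $M$, as recalled in the preliminaries) gives a pointwise identity: ${\dd}u_\alpha = \alpha e^{-\alpha r}{\dd}r$, hence
\[
F^*({\dd}u_\alpha) = \alpha e^{-\alpha r} = \alpha\,|u_\alpha|\quad\text{$\m$-a.e.}
\]
Consequently, $F^{*p}({\dd}u_\alpha) = \alpha^p |u_\alpha|^p$ pointwise $\m$-a.e., and therefore
\[
\Lambda_p(u_\alpha)=\frac{\ds\int_M F^{*p}({\dd}u_\alpha)\,\dm}{\ds\int_M |u_\alpha|^p\,\dm}=\alpha^p,
\]
provided the two integrals are finite and the denominator is positive. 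The latter is immediate since $u_\alpha<0$ everywhere; finiteness of both integrals is exactly the content of Lemma \ref{lemma_u^p_r^p}/\eqref{lemma_u^p_r^p_i}, whose proof used the curvature/$S$-curvature hypothesis through the volume comparison estimate \eqref{st11ong}. Thus $\Lambda_p(u_\alpha)=\alpha^p\to 0$ as $\alpha\to 0^+$.

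Putting these together, Lemma \ref{Eigenvaluelemma} yields
\[
0\leq \lambda_{1,p,\m}(M)=\inf_{u\in C^\infty_0(M)\setminus\{0\}}\Lambda_p(u)\leq \liminf_{\alpha\to 0^+}\Lambda_p(u_\alpha)=\liminf_{\alpha\to 0^+}\alpha^p=0,
\]
which proves the theorem. There is no genuine obstacle here beyond the two lemmas already in place: the eikonal equation does all the work, with Lemma \ref{lemma_u^p_r^p} ensuring the test functions lie in the right integrability class and Lemma \ref{Eigenvaluelemma} handling the passage from $\Lip_0$/weighted exponentials back to the smooth compactly supported test functions appearing in the definition \eqref{lambda1}. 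The only subtlety worth double-checking is that the approximation scheme inside Lemma \ref{Eigenvaluelemma} is compatible with our curvature assumption, but since that lemma is proved under exactly the same hypotheses $\mathbf{Ric}(\nabla r)\geq -(n-1)k^2$ and $\mathbf{S}(\nabla r)\geq (n-1)h$ as the present theorem, it applies directly.
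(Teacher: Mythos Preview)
Your proof is correct and follows essentially the same approach as the paper: compute $\Lambda_p(u_\alpha)=\alpha^p$ via the eikonal equation and the finiteness from Lemma \ref{lemma_u^p_r^p}/\eqref{lemma_u^p_r^p_i}, then invoke Lemma \ref{Eigenvaluelemma} and let $\alpha\to 0^+$.
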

\begin{proof}
For every $\alpha > 0$, let $u_{\alpha} :=-e^{-\alpha r}$.
 Since $F^{*}({\dd}r) = 1$, we have
\begin{equation}\label{bascffa}
\ds\int_{M} {F}^{*p }( {\dd}u_{\alpha}) \dm
 = \alpha^p \ds\int_{M}  e^{-p \alpha r} \dm,\quad
\ds\int_{M} |u_{\alpha}|^p(x) \dm = \ds\int_{M}  e^{- p \alpha r} \dm.
\end{equation}
Then  (\ref{lambda1}) combined with Lemma \ref{Eigenvaluelemma} and \eqref{bascffa} yields
\begin{align*}
\lambda_{1,p,  \m} (M)& = \inf_{u\in C^\infty_0(M)\backslash \{ 0 \} } \Lambda_p(u)\leq\liminf_{\alpha\rightarrow 0^+} \Lambda_p(u_\alpha)= \liminf_{\alpha\rightarrow 0^+}  \alpha^p =0,
\end{align*}
which concludes the proof.
\end{proof}

\begin{corollary}\label{basicwanisheigven}
Let $(M,F,\m)$ be an $n$-dimensional forward complete $\FMMM$ with
\[
\mathbf{Ric} \geq -(n-1)k^2,\quad \mathbf{S}  \geq (n-1) h,
\]
for some constants $h\geq k\geq 0$. Then $\lambda_{1,p, \m} (M)  = 0$ for all $p\in(1,\infty)$.
\end{corollary}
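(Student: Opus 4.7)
The plan is to obtain Corollary \ref{basicwanisheigven} as an immediate specialization of Theorem \ref{Thm_Eigenvalue}. The only gap between the two statements is the strength of the curvature hypothesis: the theorem requires $\mathbf{Ric}(\nabla r)\geq -(n-1)k^2$ and $\mathbf{S}(\nabla r)\geq (n-1)h$ along the gradient of the distance function from some basepoint $o$, whereas the corollary assumes the unconditional (global) bounds $\mathbf{Ric}\geq -(n-1)k^2$ and $\mathbf{S}\geq (n-1)h$.

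First I would fix an arbitrary point $o\in M$ and let $r(x):=d_F(o,x)$. By the eikonal equation recalled in the preliminaries, $F(\nabla r)=F^\ast({\dd}r)=1$ for $\m$-a.e.\ $x\in M$, so $\nabla r|_x$ is a well-defined nonzero tangent vector almost everywhere. Next I would check that the two global curvature inequalities specialize correctly to this vector field. Since $\mathbf{Ric}$ is $0$-homogeneous in $y$, the hypothesis $\mathbf{Ric}\geq -(n-1)k^2$ means $\mathbf{Ric}(y)\geq -(n-1)k^2$ for every $y\in TM\setminus\{0\}$; in particular $\mathbf{Ric}(\nabla r)\geq -(n-1)k^2$ $\m$-a.e. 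Similarly, the convention introduced after \eqref{Scurvature} says that $\mathbf{S}\geq (n-1)h$ stands for $\mathbf{S}(x,y)\geq (n-1)h\,F(x,y)$ on $TM\setminus\{0\}$; evaluating at $y=\nabla r|_x$ and using $F(\nabla r)=1$ gives $\mathbf{S}(\nabla r)\geq (n-1)h$ $\m$-a.e.

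With these two pointwise inequalities verified, the hypotheses of Theorem \ref{Thm_Eigenvalue} are satisfied. Applying that theorem yields $\lambda_{1,p,\m}(M)=0$ for every $p\in(1,\infty)$, which is exactly the conclusion of the corollary. There is no genuine obstacle here, because the argument is a one-line reduction: the only subtlety worth double-checking is that both $\mathbf{Ric}$ and $\mathbf{S}/F$ are homogeneous of degree zero in $y$, so global bounds immediately restrict to bounds along any specific direction, and in particular along $\nabla r$.
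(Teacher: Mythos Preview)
Your proposal is correct and matches the paper's approach: the corollary is stated in the paper without a separate proof, as an immediate consequence of Theorem~\ref{Thm_Eigenvalue}, and your reduction---that the global bounds $\mathbf{Ric}\geq -(n-1)k^2$ and $\mathbf{S}\geq (n-1)h$ trivially imply the directional bounds along $\nabla r$ required by the theorem---is exactly the intended one-line argument.
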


\begin{remark} (i) Consider a noncompact Riemannian manifold equipped with the canonical Riemannian measure satisfying $\mathbf{Ric}\geq  0$.   Corollary  \ref{basicwanisheigven} implies the first eigenvalue of the Laplacian vanishes, which has been observed first by  Cheng--Yau \cite[Proposition 9]{Cheng}.
	
	(ii) Theorem \ref{Thm_Eigenvalue} and Corollary \ref{basicwanisheigven} have been proved by Krist\'aly \cite[Theorem 1.3]{Kris} in the special case when $(M,F,\m)$ is the Funk metric on the unit Euclidean ball $\mathbb B^n$ endowed with the Busemann--Hausdorff measure.
\end{remark}

 The following result is a direct consequence of Corollary \ref{basicwanisheigven}.
\begin{corollary}\label{firsteginvees} Let $(M,F)$ be an $n$-dimensional Cartan--Hadamard manifold with constant flag curvature $-k^2<0$. For each $p\in (1,\infty)$, the following statements hold$:$
\begin{enumerate}[\rm (a)]
\item If there exists a positive smooth measure $\m$ such that $\mathbf{S} \geq (n-1)|k|$,
then $\lambda_{1, p,\m} (M)  = 0$.

\smallskip

\item\label{poeigve} If there exists a positive smooth measure $\m$ such that $\lambda_{1, p,\m} (M) >0$, then $\ds\inf_{y\in SM}\mathbf{S}(y)<(n-1)|k|$.
\end{enumerate}
\end{corollary}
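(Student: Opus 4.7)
The plan is that both items follow essentially by bookkeeping from Corollary~\ref{basicwanisheigven}; no new ideas are required beyond matching up the curvature hypotheses. Without loss of generality I assume $k\geq 0$, so that $|k|=k$, since only $k^2$ enters the curvature hypothesis.

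For part (a), I would first note that constant flag curvature $-k^2$ forces the Ricci curvature to be identically $-(n-1)k^2$, simply because $\mathbf{Ric}(y)$ is (by definition in Section~\ref{prelimain}) the trace of $\mathbf{K}(y;\cdot)$ over a $g_y$-orthonormal basis of $T_xM$, and each of the $n-1$ nontrivial contributions equals $-k^2$. In particular the lower Ricci bound $\mathbf{Ric}\geq -(n-1)k^2$ holds (with equality). Combined with the assumed $S$-curvature lower bound $\mathbf{S}\geq (n-1)|k|=(n-1)k$, the hypotheses of Corollary~\ref{basicwanisheigven} are satisfied with the admissible choice $h:=k$ (so that $h\geq k\geq 0$), and its conclusion yields $\lambda_{1,p,\m}(M)=0$ for every $p\in(1,\infty)$.

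For part (b), I would argue by contrapositive: assume $\inf_{y\in SM}\mathbf{S}(y)\geq (n-1)|k|$ and derive $\lambda_{1,p,\m}(M)=0$. Using the $1$-positive homogeneity of $\mathbf{S}$ (recorded just after \eqref{Scurvature}), every nonzero $y\in T_xM$ can be written as $y=F(x,y)\tilde y$ with $\tilde y\in S_xM$, so
\[
\mathbf{S}(x,y)=F(x,y)\,\mathbf{S}(x,\tilde y)\geq (n-1)|k|\,F(x,y).
\]
This is precisely the pointwise relation ``$\mathbf{S}\geq (n-1)|k|$'' in the sense fixed after~\eqref{Scurvature}, whence part (a) applies and forces $\lambda_{1,p,\m}(M)=0$, contradicting the assumption $\lambda_{1,p,\m}(M)>0$.

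No essential obstacle is anticipated; the only step worth spelling out in the write-up is the homogeneity translation in (b) between the infimum $\inf_{y\in SM}\mathbf{S}(y)$ and the tensorial inequality $\mathbf{S}(x,y)\geq(n-1)|k|F(x,y)$ required by (a). The rest is bookkeeping once the Ricci identity $\mathbf{Ric}\equiv-(n-1)k^2$ under constant flag curvature is recorded.
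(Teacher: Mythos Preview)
Your proposal is correct and follows exactly the route indicated by the paper, which simply states that the corollary is a direct consequence of Corollary~\ref{basicwanisheigven}. You have merely made explicit the two bookkeeping steps the paper leaves implicit: that constant flag curvature $-k^2$ yields $\mathbf{Ric}\equiv -(n-1)k^2$, and that the contrapositive of (a) together with the $1$-homogeneity of $\mathbf{S}$ gives (b).
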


\begin{remark} Let $(\mathbb H^n,g)$ be  the $n$-dimensional hyperbolic space with constant sectional curvature ${\bf K}=-k^2<0$, endowed with the canonical Riemannian measure $\m_{\rm can}$;  one has the optimal spectral gap
\[	\lambda_{1, p,\m_{\rm can}} (\mathbb H^n)=\frac{(n-1)^{2p}k^{2p}(p-1)^p}{p^{2p}}>0, \]
see  McKean \cite{Mck} for $p=2$, and Farkas--Kaj\'ant\'o--Krist\'aly \cite{FKK} for generic $p>1$. 	
The fact that $\mathbf{S}=0$ in this case aligns perfectly with
Corollary \ref{firsteginvees}/\eqref{poeigve}.
\end{remark}

%\subsection{Approximation of Hardy's quotient}

%Here, a new measure ${\dd} \mu := r^{-p} \dm$ will be considered in the next lemma.
%By (\ref{volumecompareapproxi_1})
%\[
%\dm|_{(r,y)} \leq e^{-\tau(\gamma'_{y}(t))} \frac{e^{(n-1)kr}}{2 k^{n-1}}  {\dd}r {\dd}\nu_o(y).
%\]

%According to Lemma \ref{dulemma2},   if $p>n$, then for any $u\in C^\infty_0(M)$ with $u(o)\neq0$, there holds
%\[
%\int_M \frac{|u|^p}{r^p}\dm=\int_M |u|^p {\dd} \mu=+\infty.
%\]
%Hence, the Hardy inequality is always trivial. Therefore,  we focus on the case when $p\in (1,n)$, in which case the integration is always finite.

\subsection{Failure of Hardy inequality}
In this subsection, we investigate the Hardy inequality in the Finsler framework.
As before, let  $(M,F,\m)$ be  an $n$-dimensional forward complete $\FMMM$ and let $o\in M$ be a fixed point.
Given $p\in (1,\infty)$, set
\begin{equation}\label{Hardyifuncitonal}
 \mathcal{J}_{o,p}(u): =  \frac{\ds\int_M {F}^{*p}({\dd}u) \dm}{\ds\int_M \frac{|u|^p}{r^p}\dm},
\end{equation}
where $r(x):=d_F(o,x)$. In order to establish the $L^p$-Hardy inequality, it is equivalent to find
\[
\inf_{u\in C^\infty_0(M)\backslash\{0\}}\mathcal{J}_{o,p}(u).
\]
Due to  Lemma \ref{dulemma2}/\eqref{firtesk2},  provided that $p\in [n,\infty)$, for every $ u\in C^\infty_0(M)$ with $u(x)=c\neq 0$ for every $x\in B^+_o(r)$ and for some $r>0$, we have that
\begin{equation}\label{hardfailn>p}
\int_M {F}^{*p}({\dd}u) \dm<+\infty, \quad \int_M \frac{|u|^p}{r^p}\dm=+\infty,
\end{equation}
which implies $$\inf_{u\in C^\infty_0(M)\backslash\{0\}}\mathcal{J}_{o,p}(u)=0,$$ i.e., the Hardy inequality fails. Therefore, it is enough to consider the case when $p\in (1,n)$.

\begin{lemma}\label{Hardyfinallemma}
Let $(M,F,\m)$ be an $n$-dimensional forward complete $\FMMM$ and let $h\geq k\geq 0$ be constants. Assume there exists a point $o\in M$ such that
\[
\mathbf{Ric}(\nabla r)\geq -(n-1)k^2, \quad \mathbf{S}(\nabla r) \geq (n-1) h,
\]
with $r(x):=d_F(o,x)$. Then for any  $p\in (1, n)$,
\begin{equation}\label{hardfail}
\inf_{u\in C^\infty_0(M)\backslash\{0\}} \mathcal{J}_{o,p}(u)\leq \liminf_{\alpha \rightarrow 0^+} \mathcal{J}_{o,p}(u_{\alpha}),
\end{equation}
where $u_{\alpha}:=-e^{-\alpha r}$ for $\alpha>0,$ and $\mathcal{J}_{o,p}$ is defined by \eqref{Hardyifuncitonal}.
\end{lemma}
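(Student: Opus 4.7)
The plan is to mimic the three-step scheme of Lemma \ref{Eigenvaluelemma}, with the denominator $\int_M |u|^p\,\dm$ replaced by the weighted one $\int_M |u|^p/r^p\,\dm$. Throughout I fix $p\in(1,n)$; this range is essential because it is precisely the regime in which Lemma \ref{dulemma2}/(i) guarantees that $r^{-p}\m$ is locally finite, and in which Lemma \ref{lemma_u^p_r^p}/(ii) supplies the finiteness $\int_M |u_\alpha|^p/r^p\,\dm<+\infty$ for the Gaussian bubble $u_\alpha=-e^{-\alpha r}$.

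\textbf{Step 1 (smooth vs.\ Lipschitz).} I would first prove the identity
\[
\inf_{u\in C^\infty_0(M)\setminus\{0\}}\mathcal{J}_{o,p}(u)=\inf_{u\in \Lip_0(M)\setminus\{0\}}\mathcal{J}_{o,p}(u).
\]
One inclusion is immediate from $C^\infty_0(M)\subset\Lip_0(M)$. For the converse, given $u\in\Lip_0(M)$, Proposition \ref{lipsconverppax-prop} produces a sequence $(u_l)\subset C^\infty_0(M)\cap\Lip_0(M)$ with a common compact support $\mathscr{K}$, a uniform bound $|u_l|\leq C$, and $u_l\rightrightarrows u$, $\|u_l-u\|_{W^{1,p}_\m}\to 0$. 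The numerator $\int_M F^{*p}({\dd}u_l)\,\dm\to\int_M F^{*p}({\dd}u)\,\dm$ is handled verbatim as in \eqref{Flimivanishes_forEigenvalue}. For the denominator, dominated convergence applies with the majorant $(2C)^p\chi_\mathscr{K}\,r^{-p}$, whose integrability is exactly Lemma \ref{dulemma2}/(i). \textbf{Step 2 (truncation of the Gaussian bubble).} Reusing the construction $\tilde u_{\alpha,\varepsilon}:=u_{\alpha,\varepsilon,\varepsilon}\in\Lip_0(M)$ from the proof of Lemma \ref{Eigenvaluelemma}, one has the pointwise monotone convergences $|\tilde u_{\alpha,\varepsilon}|\nearrow|u_\alpha|$ and $F^*({\dd}\tilde u_{\alpha,\varepsilon})\nearrow F^*({\dd}u_\alpha)$ as $\varepsilon\to 0^+$. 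Since Lemma \ref{lemma_u^p_r^p} gives both $|u_\alpha|^p/r^p$ and $F^{*p}({\dd}u_\alpha)$ in $L^1(M,\m)$, the dominated convergence theorem yields
\[
\mathcal{J}_{o,p}(u_\alpha)=\lim_{\varepsilon\to 0^+}\mathcal{J}_{o,p}(\tilde u_{\alpha,\varepsilon}).
\]

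\textbf{Step 3 (conclusion).} Chaining Steps 1 and 2 produces, for each $\alpha>0$ and each small $\varepsilon>0$,
\[
\inf_{u\in C^\infty_0(M)\setminus\{0\}}\mathcal{J}_{o,p}(u)=\inf_{v\in \Lip_0(M)\setminus\{0\}}\mathcal{J}_{o,p}(v)\leq \mathcal{J}_{o,p}(\tilde u_{\alpha,\varepsilon}),
\]
and then sending $\varepsilon\to 0^+$ followed by $\alpha\to 0^+$ delivers \eqref{hardfail}. The only real obstacle, as opposed to routine bookkeeping, is justifying the passage $\int_M |u_l|^p/r^p\,\dm\to\int_M |u|^p/r^p\,\dm$ in Step 1 from just uniform convergence $u_l\rightrightarrows u$: this is exactly where the hypothesis $p<n$ together with Lemma \ref{dulemma2}/(i) becomes indispensable, since if $p\geq n$ no locally integrable dominating function on the common compact support $\mathscr{K}$ would be available, and indeed one already sees in \eqref{hardfailn>p} that for $p\geq n$ the claim of the lemma degenerates trivially.
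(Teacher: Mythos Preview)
Your proposal is correct and follows essentially the same three-step scheme as the paper's own proof: the reduction from $C^\infty_0$ to $\Lip_0$ via Proposition \ref{lipsconverppax-prop} together with Lemma \ref{dulemma2}/\eqref{firtesk1} for the weighted denominator, the truncation $\tilde u_{\alpha,\varepsilon}$ borrowed from Lemma \ref{Eigenvaluelemma} combined with Lemma \ref{lemma_u^p_r^p}/\eqref{lemma_u^p_r^p_i}\&\eqref{lemma_u^p_r^p_ii}, and the final chaining. Your remark isolating the role of $p<n$ via the integrable majorant $(2C)^p\chi_\mathscr{K}\,r^{-p}$ is in fact a slightly more explicit justification than the paper gives.
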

\begin{proof} The proof being similar to  Lemma \ref{Eigenvaluelemma}, we only sketch it,   dividing into three steps.

\smallskip

\textit{Step 1.} We show that
\[
\inf_{u\in C^\infty_0(M)\backslash\{0\}} \mathcal{J}_{o,p}(u) = \inf_{u \in \Lip_0(M)\backslash\{0\}} \mathcal{J}_{o,p}(u).
\]
In view of  $C^\infty_0(M) \subset \Lip_0(M)$,  it suffices to show that for any $u\in \Lip_0(M)$, there exists a sequence $u_l\in C^\infty_0(M)\backslash\{0\}$  such that
\begin{equation}\label{reverinverhar1}
\lim_{l\rightarrow \infty} \mathcal{J}_{o,p}(u_l) = \mathcal{J}_{o,p}(u).
\end{equation}
Given any $u \in \Lip_0(M)$,  Proposition \ref{lipsconverppax-prop} guarantees the existence of
  a sequence  $(u_l) \subset C^\infty_0(M)\cap \Lip_0(M)$  satisfying
\begin{equation}\label{threeresults}
\supp u\cup\supp u_l\subset \mathscr{K}, \quad u_l\rightrightarrows u,\quad \|u_l - u \|_{W^{1,p}_{\m}}\rightarrow 0,
\end{equation}
where $\mathscr{K}\subset M$ is a compact set.
The uniform convergence in (\ref{threeresults}) combined with Lemma \ref{dulemma2}/\eqref{firtesk1} yields
\begin{equation*}\label{lim_u/r}
\lim_{l\rightarrow \infty} \int_{M} \frac{|u_l|^p}{r^p} \dm =\lim_{l\rightarrow \infty} \int_{\mathscr{K}} |u_l|^p {\dd}\mu = \int_{\mathscr{K}}  \lim_{l\rightarrow \infty}|u_l|^p {\dd}\mu = \int_{\mathscr{K}} |u|^p {\dd}\mu= \int_{M} \frac{|u|^p}{r^p} \dm,
\end{equation*}
where ${\dd}\mu = r^{-p} \dm$. Then  \eqref{reverinverhar1} follows because   \eqref{Flimivanishes_forEigenvalue} remains valid.

\smallskip

\textit{Step 2.} We are going to show that for every $u_{\alpha}:=-e^{-\alpha r}$ with  $\alpha>0$, there exists a sequence $(v_{\alpha, l}) \subset \Lip_0(M)$ such that
\begin{equation}\label{FpStep2main}
\mathcal{J}_{o,p} (u_{\alpha}) = \lim_{l\rightarrow \infty} \mathcal{J}_{o,p} (v_{\alpha, l}).
\end{equation}
It is enough to discuss the case $\sup_{x\in M}r(x)=+\infty$; otherwise, $M$ is compact and \eqref{FpStep2main} is obvious by choosing $v_{\alpha, l}=u_{\alpha}\in \Lip_0(M)$.
For every small $\varepsilon>0$, set
\[
u_{\alpha, \varepsilon}:=
\begin{cases}
-e^{-\alpha\varepsilon}, &\text{ if }  r < \varepsilon,\\
\\
-e^{-\alpha r}, &\text{ if }   r \geq \varepsilon,
\end{cases}
\qquad \tilde{u}_{\alpha, \varepsilon}:=\min\{0, u_{\alpha, \varepsilon} +\varepsilon\},\qquad v_{\alpha,l}:=\tilde{u}_{\alpha, 1/l}.
\]
Then $\tilde{u}_{\alpha, \varepsilon}\in  \Lip_0(M)$ follows by the same argument as in \textit{Step 2} of the proof of  Lemma \ref{Eigenvaluelemma}. Moreover,
  the dominated convergence theorem combined with  \eqref{varisonpoiconve_forEigenvalue} and Lemma \ref{lemma_u^p_r^p}/\eqref{lemma_u^p_r^p_i}\eqref{lemma_u^p_r^p_ii} yields
\[
\lim_{\varepsilon\rightarrow 0^+} \int_{M} \frac{|\tilde{u}_{\alpha, \varepsilon}|^p}{r^p} \dm = \int_{M} \frac{|u_\alpha|^p}{r^p} \dm, \quad \lim_{\varepsilon \rightarrow 0^+} \int_{M} F^{*p}( {\dd}\tilde{u}_{\alpha, \varepsilon}) \dm =   \int_{M} F^{*p}( {\dd} u_{\alpha}) \dm,
\]
which implies
$\lim_{\epsilon\rightarrow 0^+} \mathcal{J}_{o,p} (\tilde{u}_{\alpha, \varepsilon})=\mathcal{J}_{o,p} (u_{\alpha})$, concluding the proof of
 \eqref{FpStep2main}.

\smallskip

\textit{Step 3.}
By \textit{Steps 1} and \textit{2}, we obtain for every $l\in \mathbb N$ that
\[
\inf_{u\in C^\infty_0(M)} \mathcal{J}_{o,p}(u)=\inf_{u\in \Lip_0(M)} \mathcal{J}_{o,p}(u)\leq \mathcal{J}_{o,p}(v_{\alpha,l}),
\]
which together with \eqref{FpStep2main} furnishes
\[
\inf_{u\in C^\infty_0(M)\backslash\{0\}} \mathcal{J}_{o,p}(u)\leq \liminf_{\alpha\rightarrow 0^+}\lim_{l\rightarrow \infty }\mathcal{J}_{o,p}(v_{\alpha,l})=\liminf_{\alpha\rightarrow 0^+}\mathcal{J}_{o,p}(u_\alpha),
\]
 completing the proof.
\end{proof}

%\subsection{Cases of Hardy inequality fails}

\begin{theorem}\label{ThmHardyFails1}
Let $(M,F,\m)$ be  an $n$-dimensional forward complete $\FMMM$ and let $k,h$ be two constants with $h\geq k\geq 0$ and $h>0$.
Suppose that there exists a point $o\in M$  such that
\[
\mathbf{Ric}(\nabla r)\geq -(n-1)k^2 , \quad \mathbf{S}(\nabla r)\geq (n-1){h},
\]
where $r(x):=d_F(o,x)$. Then the $L^p$-Hardy inequality fails for all $p\in (1, n)$, i.e.,
\begin{equation}\label{failhARD}
\inf_{u\in C^\infty_0(M)\backslash\{0\}}\frac{\ds\int_M {F}^{*p}({\dd}u) \dm}{\ds\int_M \frac{|u|^p}{r^p}\dm}=0.
\end{equation}
\end{theorem}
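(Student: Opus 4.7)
By Lemma \ref{Hardyfinallemma}, the plan is to reduce everything to estimating $\mathcal{J}_{o,p}(u_\alpha)$ for the Gaussian bubble family $u_\alpha = -e^{-\alpha r}$ and showing that $\liminf_{\alpha\to 0^+}\mathcal{J}_{o,p}(u_\alpha)=0$ whenever $h>0$. Since ${\dd}u_\alpha = \alpha e^{-\alpha r}\,{\dd}r$ and the eikonal equation gives $F^*({\dd}r)=1$, the numerator is immediately
\[
\int_M F^{*p}({\dd}u_\alpha)\,\dm = \alpha^p\int_M e^{-p\alpha r}\,\dm,
\]
so the entire question becomes whether $\int_M e^{-p\alpha r}\,\dm$ can grow only slowly in $\alpha^{-1}$ while the denominator stays bounded below.

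For the numerator, I would apply the polar-coordinate disintegration together with the key volume comparison \eqref{st11ong} of Remark \ref{impvoluremark}, which supplies the decisive factor $e^{-(n-1)hr}$:
\[
\int_M e^{-p\alpha r}\,\dm \le \mathscr{I}_{\m}(o)\int_0^{+\infty} e^{-p\alpha r}\,e^{-(n-1)hr}\,\mathfrak{s}_{-k^2}^{\,n-1}(r)\,{\dd}r.
\]
When $k>0$, the bound $\mathfrak{s}_{-k^2}(r)\le (2k)^{-1}e^{kr}$ combined with the assumption $h\ge k$ reduces the right-hand side to a constant times $\int_0^\infty e^{-p\alpha r}\,{\dd}r$, so $\int_M e^{-p\alpha r}\,\dm = O(\alpha^{-1})$ and the numerator is $O(\alpha^{p-1})$. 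When $k=0$, we have $h>0$ and $\mathfrak{s}_{-k^2}(r)=r$, so the factor $e^{-(n-1)hr}$ absorbs $r^{n-1}$ and the whole integral stays bounded, giving numerator $O(\alpha^p)$. In both subcases, since $p>1$, the numerator tends to $0$ as $\alpha\to 0^+$.

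For the denominator I would produce a positive lower bound that is uniform in small $\alpha$. Fixing any $0<\epsilon<R<+\infty$, the estimates $r\le R$ and $e^{-p\alpha r}\ge e^{-p\alpha R}$ on the forward annulus $B_o^+(R)\setminus B_o^+(\epsilon)$ give
\[
\int_M\frac{|u_\alpha|^p}{r^p}\,\dm \ \ge\ \frac{e^{-p\alpha R}}{R^p}\,\m\bigl[B_o^+(R)\setminus B_o^+(\epsilon)\bigr],
\]
which is bounded below by a strictly positive constant for all $\alpha$ sufficiently small. Combining this with the numerator estimate yields $\mathcal{J}_{o,p}(u_\alpha)\to 0$, and \eqref{failhARD} follows from Lemma \ref{Hardyfinallemma}.

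The real content of the argument, and the main potential obstacle, is the quantitative use of $h>0$: it is exactly the exponential factor $e^{-(n-1)hr}$ produced by the $S$-curvature hypothesis in \eqref{st11ong} that makes the numerator decay fast enough to overpower the denominator. If $h$ were merely nonnegative — as in the flat reversible Minkowski case with $h=k=0$ endowed with the Busemann--Hausdorff measure — then $\int_M e^{-p\alpha r}\,\dm$ would behave like $\alpha^{-n}$, the numerator would blow up as $\alpha^{p-n}$, and the scheme would fail, which is consistent with the known validity of the sharp Hardy inequality \eqref{Hardy-1} in that setting.
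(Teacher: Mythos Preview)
Your proposal is correct and follows essentially the same approach as the paper: invoke Lemma \ref{Hardyfinallemma}, bound the numerator via the polar volume comparison \eqref{st11ong} with the same case split $k>0$ versus $k=0$, and produce a positive lower bound on the denominator that is stable as $\alpha\to 0^+$. The only difference is in the denominator step: the paper uses the small-radius density asymptotic \eqref{volumerzero} to integrate over a tiny ball $B_o^+(\varepsilon)$ and obtain $\int_M |u_\alpha|^p r^{-p}\,\dm \gtrsim e^{-\alpha p\varepsilon}\varepsilon^{n-p}$, whereas you bound crudely over a fixed annulus by $e^{-p\alpha R}R^{-p}\m[B_o^+(R)\setminus B_o^+(\epsilon)]$. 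Your variant is slightly more elementary (it needs nothing about $\hat\sigma_o$), while the paper's variant is a bit sharper in $\varepsilon$; both suffice for the conclusion.
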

\begin{proof} Given $\alpha\in (0,1]$, set $u_\alpha(x):=-e^{-\alpha r(x)}$. Let $(r,y)$ denote the polar coordinate system around $o$.
Due to \eqref{volumerzero}, there exists an $\varepsilon \in (0,\mathfrak{i}_o)$ such that
\begin{equation}\label{sigmasmall}
\hat{\sigma}_o(r,y)\geq \frac12 e^{-\tau(y)}r^{n-1},\quad \forall\,r\in (0,\varepsilon),\ \forall\,y\in S_oM.
\end{equation}
Therefore, a direct calculation yields
\begin{align}
\int_M \frac{|u_\alpha|^p}{r^p}\dm& \geq \int_{S_oM}\int^{\varepsilon}_0 \frac{e^{-\alpha p r}}{r^p} \frac12 e^{-\tau(y)}r^{n-1}{\dd}\nu_o(y){\dd}r
%&\geq \frac12\left(\int_{S_oM}e^{-\tau(y)}{\dd}\nu_o(y) \right)\left( \int^{+ \infty}_0 r^{-p}e^{-p\alpha r-(n-1){H}r} \mathfrak{s}_{-k^2}^{n-1}(r) {\dd}r \right)\notag\\
\geq  \frac{\mathscr{I}_{\m}(o)e^{- \alpha p \varepsilon}}{2}\int^{\varepsilon}_0   {r^{n-p-1}}{\dd}r\notag\\
& = \frac{\mathscr{I}_{\m}(o)}{2e^{\alpha p\varepsilon}(n-p)} \varepsilon^{n-p}.\label{standintegr_Knegative}
\end{align}
The rest of the proof is divided into two cases.

\smallskip

\textit{Case 1}: $k>0$.  For this case, combining \eqref{Hardyifuncitonal}, \eqref{hardfail}, \eqref{standintegr_Knegative}, and (\ref{boundeduandF}) yields
\begin{align*}
\inf_{u\in C^\infty_0(M)\backslash\{0\}}\frac{\ds\int_M {F}^{*p}({\dd}u) \dm}{\ds\int_M \frac{|u|^p}{r^p}\dm}\leq &
\liminf_{\alpha\rightarrow 0^+}\frac{\ds\int_M {F}^{*p}({\dd}u_\alpha) \dm}{\ds\int_M \frac{|u_\alpha|^p}{r^p}\dm}\\ \leq &  \liminf_{\alpha\rightarrow 0^+}\frac{2 e^{\alpha p\varepsilon}(n-p) \alpha^{p-1} }{pk^{n-1}\varepsilon^{n-p}}\\=& 0,
\end{align*}
which establishes \eqref{failhARD}.

\smallskip

\textit{Case 2}:  $k=0$.
Due to    ${F}^{*}({\dd}u_\alpha)=\alpha e^{-\alpha r}=\alpha|u_\alpha|$,  an argument similar to \eqref{st11ong}  yields
\begin{align}
\nonumber \int_M {F}^{*p}({\dd}u_\alpha) \dm=&\alpha^p \int_M|u_\alpha|^p\dm \\ \leq & \alpha^p \int_{S_oM}  \left( \int^{+ \infty}_0 e^{-\tau(y)} e^{-p\alpha r-(n-1)hr}r^{n-1} {\dd}r  \right) {\dd}\nu_o(y) \notag \\
= &  \nonumber  \alpha^p \mathscr{I}_{\m}(o)   \int^{+\infty}_0  e^{-p\alpha r-(n-1)hr}r^{n-1} {\dd}r \\ =&     \alpha^p \mathscr{I}_{\m}(o) \frac{(n-1)!}{(\alpha p+(n-1)h)^n}< +\infty.\label{Fstar_Kzero}
\end{align}
Then \eqref{hardfail} together with   \eqref{standintegr_Knegative} and \eqref{Fstar_Kzero} implies
\begin{align*}
\inf_{u\in C^\infty_0(M)\backslash\{0\}}\frac{\ds\int_M {F}^{*p}({\dd}u) \dm}{\ds\int_M \frac{|u|^p}{r^p}\dm}\leq&\liminf_{\alpha\rightarrow 0^+}\frac{\ds\int_M {F}^{*p}({\dd}u_\alpha) \dm}{\ds\int_M \frac{|u_\alpha|^p}{r^p}\dm}\\ \leq&  \liminf_{\alpha\rightarrow 0^+}\frac{2 e^{\alpha p\varepsilon} (n-p)(n-1)! \,\alpha^p  }{(\alpha p+(n-1)h)^n\varepsilon^{n-p}} \\=& 0,
\end{align*}
which concludes the proof.
\end{proof}

A direct consequence of Theorem \ref{ThmHardyFails1} reads as follows.

\begin{corollary}\label{failhardcor1}
	Let $(M,F,\m)$ be  an $n$-dimensional forward complete $\FMMM$ with
	\[
	\mathbf{Ric}\geq -(n-1) k^2, \quad  \mathbf{S} \geq (n-1){h}
	\]
	where  $k,h$ are two constants with $h\geq k\geq 0$ and $h>0$.
	Then  for every point $o\in M$ and every $p\in  (1, n)$,
	the $L^p$-Hardy inequality fails, i.e., \eqref{failhARD}  holds.
\end{corollary}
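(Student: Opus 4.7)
The plan is very short, since Corollary \ref{failhardcor1} is explicitly advertised as a direct consequence of Theorem \ref{ThmHardyFails1}. The only work to do is to verify that the hypotheses match.

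First I would fix an arbitrary point $o\in M$ and set $r(x):=d_F(o,x)$. By the Finslerian comparison setup recalled in Section \ref{prelimain}, the gradient $\nabla r$ is defined $\m$-a.e., and at each such point $\nabla r$ is a unit tangent vector in the sense that $F(\nabla r)=1$ (eikonal equation). Then I would invoke the global curvature bounds: by hypothesis, $\mathbf{Ric}(y)\geq -(n-1)k^2$ holds for all nonzero $y\in TM$ and $\mathbf{S}(y)\geq (n-1)hF(x,y)$ likewise. Specializing $y=\nabla r|_x$ and using $F(\nabla r)=1$ for $\m$-a.e. $x$, we immediately obtain
\[
\mathbf{Ric}(\nabla r)\geq -(n-1)k^2,\qquad \mathbf{S}(\nabla r)\geq (n-1)h.
\]
This is exactly the directional hypothesis required in Theorem \ref{ThmHardyFails1} (cf. also Remark \ref{impvoluremark}, which explains why in the proof chain only the values of $\mathbf{Ric}$ and $\mathbf{S}$ along $\nabla r$ are ever used).

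With those hypotheses verified and $h>0$, $h\geq k\geq 0$ as assumed, Theorem \ref{ThmHardyFails1} applies verbatim at the chosen base point $o$ and for every $p\in (1,n)$, yielding
\[
\inf_{u\in C^\infty_0(M)\setminus\{0\}}\frac{\ds\int_M F^{*p}({\dd}u)\,\dm}{\ds\int_M \frac{|u|^p}{r^p}\,\dm}=0.
\]
Since $o\in M$ was arbitrary, the same conclusion holds with any base point, which is precisely the statement of the corollary.

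There is no real obstacle; the one point worth noting is the case $p\in [n,\infty)$, which is \emph{not} claimed by this corollary but is already disposed of in the discussion preceding Lemma \ref{Hardyfinallemma} using \eqref{hardfailn>p} (take any test function that is a nonzero constant on a forward ball around $o$). So the corollary truly reduces to a one-line application of Theorem \ref{ThmHardyFails1}, with no additional analysis required.
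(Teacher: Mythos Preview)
Your proposal is correct and matches the paper's own treatment: the paper simply states that the corollary is a direct consequence of Theorem~\ref{ThmHardyFails1}, and your verification that the global bounds $\mathbf{Ric}\geq -(n-1)k^2$ and $\mathbf{S}\geq (n-1)h$ specialize to the directional bounds along $\nabla r$ (via the eikonal equation) is exactly the intended one-line reduction.
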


\begin{remark}\rm (i)
	A closer inspection of the proof of Theorem \ref{ThmHardyFails1} clearly shows that if $h=0$, the last limit  would be $+\infty$ since $p<n$. This is not surprising since in the Riemannian setting, the $L^p$-Hardy inequality is valid on every  $n$-dimensional Cartan--Hadamard manifold  equipped with the canonical Riemannian measure, due to the vanishing $S$-curvature \eqref{RiedisS}, see Yang--Su--Kong \cite{Yang} and Krist\'aly \cite{K2}.
	On the other hand, there are several Finsler manifolds satisfying the assumptions of Corollary \ref{failhardcor1}; see Section  \ref{sectionFunk} for details.
	
	(ii) In Corollary \ref{failhardcor1}, the noncompactness of $M$ is unnecessary. Indeed, if $M$ is compact, then the constant functions belong to $C^\infty_0(M)=C^\infty(M)$, which immediately furnishes \eqref{failhARD}.
\end{remark}

%\begin{remark}\label{remarkFunk}
%\end{remark}

\subsection{Failure of Heisenberg--Pauli--Weyl uncertainty principle}
In this subsection, we study the Heisenberg--Pauli--Weyl principle.

\begin{theorem}\label{ThmFalHPW}
Let $(M,F,\m)$ be  an $n$-dimensional forward complete $\FMMM$ and let $k,h$ be two constants with $h> k\geq 0$.
Assume that there exists a point $o\in M$  such that
\[
  \mathbf{Ric}(\nabla r)\geq  -(n-1) k^2, \quad  \mathbf{S}(\nabla r)\geq (n-1){h},
\]
where $r(x):=d_F(o,x)$. Then  the Heisenberg--Pauli--Weyl uncertainty principle fails, i.e.,
\begin{equation}\label{failHPW}
\inf_{u\in C^\infty_0(M)\backslash\{0\}}\frac{\left( \ds\int_M F^*({\dd}u)^2\dm \right)\left(  \ds\int_M u^2 r^2\dm \right) }{\left( \ds\int_M u^2 \dm  \right)^2}=0.
\end{equation}
\end{theorem}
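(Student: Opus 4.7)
The plan is to mimic the template already established in the proofs of Theorem \ref{Thm_Eigenvalue} and Theorem \ref{ThmHardyFails1}, testing the Heisenberg--Pauli--Weyl quotient against the Gaussian bubble $u_\alpha(x) := -e^{-\alpha r(x)}$ and letting $\alpha \to 0^+$. The eikonal equation $F^*({\dd}r) = 1$ gives $F^*({\dd}u_\alpha) = \alpha |u_\alpha|$, so denoting the quotient in \eqref{failHPW} by $\mathcal{H}(u)$, one has the key simplification
\[
\mathcal{H}(u_\alpha) = \alpha^2 \cdot \frac{\ds\int_M r^2 u_\alpha^2 \dm}{\ds\int_M u_\alpha^2 \dm}.
\]
It therefore suffices to show that the ratio on the right remains uniformly bounded above as $\alpha \to 0^+$, so that $\mathcal{H}(u_\alpha) = O(\alpha^2) \to 0$.

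The first step is an approximation lemma, parallel to Lemma \ref{Eigenvaluelemma} and Lemma \ref{Hardyfinallemma}, asserting
\[
\inf_{u\in C^\infty_0(M)\setminus\{0\}} \mathcal{H}(u) \leq \liminf_{\alpha\to 0^+} \mathcal{H}(u_\alpha).
\]
The scheme is structurally identical to the earlier ones: first reduce from $C_0^\infty(M)$ to $\Lip_0(M)$ via Proposition \ref{lipsconverppax-prop}, then approximate $u_\alpha$ by the truncations $\tilde u_{\alpha,\varepsilon} := \min\{0, u_{\alpha,\varepsilon}+\varepsilon\} \in \Lip_0(M)$ introduced in the proof of Lemma \ref{Eigenvaluelemma}. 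The dominated convergence needed for $\int u^2\dm$ and $\int F^{*2}({\dd}u)\dm$ is already provided by Lemma \ref{lemma_u^p_r^p}/\eqref{lemma_u^p_r^p_i} with $p=2$, and one only has to supplement it by checking $\int_M r^2 u_\alpha^2\dm < \infty$ for each fixed $\alpha > 0$ — a direct consequence of the same volume-comparison computation used there, since the extra polynomial factor $r^2$ is absorbed by the exponential decay $e^{-2\alpha r}$.

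The second step is the quantitative limit, where the strict inequality $h > k$ plays the decisive role. Using the polar disintegration \eqref{volumeexprsso} together with the central bound $\hat{\sigma}_o(r,y) \leq e^{-\tau(y)-(n-1)hr}\mathfrak{s}_{-k^2}^{n-1}(r)$ from \eqref{st11ong}, when $k>0$ the estimate $\mathfrak{s}_{-k^2}^{n-1}(r) \leq e^{(n-1)kr}/(2k)^{n-1}$ yields
\[
\int_M r^2 u_\alpha^2 \dm \leq \frac{\mathscr{I}_\m(o)}{(2k)^{n-1}} \int_0^{\infty} e^{-[2\alpha + (n-1)(h-k)]r} r^2 \dd r = \frac{2\,\mathscr{I}_\m(o)}{(2k)^{n-1}[2\alpha + (n-1)(h-k)]^3},
\]
which is uniformly bounded as $\alpha \to 0^+$ precisely because $h > k$. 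When $k=0$ (so $h>0$), $\mathfrak{s}_0(r)=r$ and an analogous computation dominates $\int_M r^2 u_\alpha^2\dm$ by a multiple of $[2\alpha + (n-1)h]^{-(n+2)}$, again uniformly bounded. For the denominator, the near-origin lower estimate $\hat{\sigma}_o(r,y)\geq \tfrac{1}{2}e^{-\tau(y)}r^{n-1}$ on $(0,\varepsilon)$ gives
\[
\int_M u_\alpha^2 \dm \geq \frac{\mathscr{I}_\m(o)\varepsilon^n}{2n} e^{-2\alpha\varepsilon},
\]
bounded below by a positive constant as $\alpha \to 0^+$. Together these two estimates close the argument.

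The main obstacle is organising the approximation step cleanly, as one must now control \emph{three} integrals simultaneously in the truncation-plus-mollification procedure — $\int u^2\dm$, $\int F^{*2}({\dd}u)\dm$, and the weighted integral $\int r^2 u^2\dm$. Beyond this, the analytic role of the strict gap $h > k$ becomes entirely transparent: the factor $e^{-(n-1)(h-k)r}$ is precisely what keeps $\int_M r^2 u_\alpha^2\dm$ uniformly bounded as $\alpha \to 0^+$; were $h = k$ (in particular $k=0,h=0$, i.e.\ the reversible Minkowski setting), the corresponding integral would diverge and the scheme would collapse, consistent with the sharpness remarks in the introduction.
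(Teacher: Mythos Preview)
Your proposal is correct and follows essentially the same approach as the paper: test against $u_\alpha=-e^{-\alpha r}$, reduce to $\Lip_0$ via Proposition~\ref{lipsconverppax-prop}, truncate, and control the quotient $\alpha^2\int r^2 u_\alpha^2\dm\big/\int u_\alpha^2\dm$ via the polar volume comparison \eqref{st11ong} and the near-origin lower bound \eqref{sigmasmall}. The only notable difference is in the upper bound for $\int_M r^2 u_\alpha^2\dm$: the paper splits the integral at a threshold $\delta$ (where $e^{-(n-1)hr}\mathfrak{s}_{-k^2}^{n-1}(r)r^2<1$) and obtains an $O(1/\alpha)$ bound, yielding $\mathcal{H}(u_\alpha)=O(\alpha)$, whereas you exploit the gap $h>k$ directly to get a uniform $O(1)$ bound and hence $\mathcal{H}(u_\alpha)=O(\alpha^2)$ --- your estimate is sharper, but both reach the same conclusion.
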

\begin{proof} We only sketch the proof as it is similar to that of Theorem \ref{ThmHardyFails1}.
For simplicity of notations, set
 \begin{equation}\label{joulast}
\mathcal {H}_{o}(u):=\frac{\left( \ds\int_M F^*({\dd}u)^2\dm \right)\left(  \ds\int_M u^2 r^2\dm \right) }{\left( \ds\int_M u^2 \dm  \right)^2}.
\end{equation}
By the same method as employed in the proof Lemma \ref{Hardyfinallemma}, one can show that
\begin{equation}\label{HPWfail}
\inf_{u\in C^\infty_0(M)\backslash\{0\}} \mathcal {H}_{o}(u)\leq \liminf_{\alpha \rightarrow 0^+} \mathcal {H}_{o}(u_{\alpha}),
\end{equation}
where $u_{\alpha}:=-e^{-\alpha r}$ for $\alpha>0$. Then it suffices to show $\lim_{\alpha \rightarrow 0^+} \mathcal {H}_{o}(u_{\alpha})=0$.

Since $h> k\geq 0$, it is easy to check that
\[
\lim_{r\rightarrow +\infty}e^{-(n-1)hr}\mathfrak{s}_{-k^2}^{n-1}(r)r^2=0,
\]
which yields a $\delta=\delta(n,k,h)>0$ such that
\begin{equation}\label{deltex2}
e^{-(n-1)hr}\mathfrak{s}_{-k^2}^{n-1}(r)r^2< 1 \quad  \text{for }r\in (\delta,+\infty).
\end{equation}
Relation \eqref{st11ong} combined with \eqref{deltex2} yields
\begin{align}
\int_M u_\alpha^2r^2\dm\leq
%& \mathscr{I}_{\m}(o) \int_{0}^{+\infty}e^{-2\alpha r-(n-1)hr}\mathfrak{s}_{-k^2}^{n-1}(r)r^2{\dd}r\notag\\
%\leq
&
\mathscr{I}_{\m}(o)\left(  \int_{0}^{\delta}+\int_{\delta}^{+\infty}e^{-2\alpha r-(n-1)hr}\mathfrak{s}_{-k^2}^{n-1}(r)r^2{\dd}r  \right)\notag\\
\leq& \mathscr{I}_{\m}(o) \left( \mathfrak{s}_{-k^2}^{n-1}(\delta) \int^\delta_0 r^2{\dd}r+\int_{\delta}^{+\infty} e^{-2\alpha r}  {\dd}r\right)
\nonumber \\ \leq & \mathscr{I}_{\m}(o)\left(  \frac{\delta^3 \mathfrak{s}_{-k^2}^{n-1}(\delta)}{3} +   \frac{1}{2\alpha}  \right).\label{frsuprhp1}
\end{align}
On the other hand, since \eqref{sigmasmall} is  valid, we obtain
\begin{align*}
\int_M u_\alpha^2\dm&\geq \int_{S_oM} \int^\varepsilon_0 e^{-2\alpha r} \frac12 e^{-\tau(y)}r^{n-1}{\dd}\nu_o(y){\dd}r
\geq  \frac{\mathscr{I}_{\m}(o)}{2}\int^{\varepsilon}_0 e^{-2\alpha r}r^{n-1}{\dd}r \geq
%\geq \frac{\mathscr{I}_{\m}(o)}{2e^{2\alpha\varepsilon}}\int^{\varepsilon}_0 r^{n-1}{\dd}r=
\frac{\mathscr{I}_{\m}(o)}{2ne^{2\alpha\varepsilon}}\varepsilon^n,
\end{align*}
which together with \eqref{frsuprhp1} and \eqref{joulast} furnishes
\begin{align*}
\lim_{\alpha\rightarrow 0^+}\mathcal {H}_{o}(u_{\alpha})=\lim_{\alpha\rightarrow 0^+}\frac{\alpha^2  \ds\int_M u_\alpha^2 r^2\dm  }{ \ds\int_M u_\alpha^2 \dm  }\leq \lim_{\alpha\rightarrow 0^+} 2ne^{2\alpha\varepsilon}\varepsilon^{-n}\alpha^2\left(  \frac{\delta^3\mathfrak{s}_{-k^2}^{n-1}(\delta)}{3} +   \frac{1}{2\alpha}  \right)=0,
\end{align*}
 completing the proof.
\end{proof}

%{\color{red}
%\begin{remark}
%The original condition is $h\geq k\geq 0$ and $h>0$. However, if $h=k>0$, we will have
%\begin{align*}
%\int_M |u_\alpha|^2r^2\dm&\leq  \mathscr{I}_{\m}(o)\int^\infty_0 e^{-2\alpha r} e^{-(n-1)kr} \left(  \frac{e^{kr}-e^{-kr}}{2k} \right)^{n-1} r^2{\dd}r\\
%&=\frac{2}{(2k)^{n-1}}\sum_{s=0}^{n-1}(-1)^s C^s_{n-1} \frac{1}{(2\alpha +2ks)^3}\sim \frac{C}{\alpha^3}
%\end{align*}
%Thus, the above proof will be invalid. In fact, if a Cartan-Hardamard manifold with $\mathbf{K}=-k^2$ and $\mathbf{S}=(n-1)k>0$, then the volume comparison yields that $\lim_{\alpha\rightarrow 0^+}\mathcal {H}_{o}(u_{\alpha})\neq 0$.
%\end{remark}
%}

%The following corollary immediately follows by Theorem \ref{ThmFalHPW}.
\begin{corollary}\label{failHPW1}
	Let $(M,F,\m)$ be  an $n$-dimensional forward complete $\FMMM$ with
	$
	\mathbf{Ric}\geq -(n-1) k^2$ and $  \mathbf{S} \geq (n-1){h},
$
	where  $k,h$ are two constants satisfying $h> k\geq 0$.
	Then for every point $o\in M$,
	the Heisenberg--Pauli--Weyl uncertainty principle fails, i.e., \eqref{failHPW} holds.
\end{corollary}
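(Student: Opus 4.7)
The plan is to deduce this corollary directly from Theorem \ref{ThmFalHPW} by noting that the global curvature bounds immediately imply the required directional bounds along $\nabla r$.

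First I would fix an arbitrary point $o\in M$ and set $r(x):=d_F(o,x)$. Recall that by the eikonal equation $F^*(\mathrm{d} r)=F(\nabla r)=1$ holds for $\m$-a.e.\ $x\in M$, so that $\nabla r|_x$ is a well-defined unit tangent vector outside a negligible set (and in fact is the velocity of the minimizing geodesic from $o$ to $x$, cf.\ \eqref{geommeaingofr}). Therefore the global bounds $\mathbf{Ric}\geq -(n-1)k^2$ and $\mathbf{S}\geq (n-1)h$ imply in particular that
\begin{equation*}
\mathbf{Ric}(\nabla r)\geq -(n-1)k^2,\qquad \mathbf{S}(\nabla r)\geq (n-1)h,
\end{equation*}
wherever $\nabla r$ is defined, which is exactly the hypothesis required for Theorem \ref{ThmFalHPW}.

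With this observation, the hypotheses of Theorem \ref{ThmFalHPW} are verified at the chosen point $o$, and applying that theorem yields \eqref{failHPW} at $o$. Since $o\in M$ was arbitrary, the conclusion holds at every point of $M$, which is precisely the statement of the corollary. There is no essential obstacle here: the corollary is merely the pointwise-directional specialization of the theorem to the global curvature setting, and the only subtlety (that $\mathbf{Ric}(\nabla r)$ and $\mathbf{S}(\nabla r)$ are defined $\m$-a.e.) is already handled by the eikonal identity and by the way $\nabla r$ arises in polar coordinates.
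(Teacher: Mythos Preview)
Your proposal is correct and matches the paper's approach: the corollary is stated without proof in the paper, being an immediate consequence of Theorem~\ref{ThmFalHPW} once one observes that the global curvature bounds trivially imply the directional bounds $\mathbf{Ric}(\nabla r)\geq -(n-1)k^2$ and $\mathbf{S}(\nabla r)\geq (n-1)h$ for any choice of $o\in M$.
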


	We now discuss the  indispensability of our assumption $h> k\geq 0$ in Theorem \ref{ThmFalHPW} and Corollary \ref{failHPW1}.

\begin{remark}\rm
	(i) One cannot relax the assumption $h> k\geq 0$  to  $h=0$ (and  $k=0$). Indeed, according to  \cite[Theorem 1.2]{HKZ}, if a Cartan--Hadamard manifold $(M,F)$ endowed with a smooth positive measure $\m$ satisfies
	\begin{equation}\label{slambdacondition}
		\mathbf{S}\leq 0, \quad \lambda_F(M)<+\infty,
	\end{equation}
	then the following  Heisenberg--Pauli--Weyl uncertainty principle holds:
	\[
	\inf_{u\in C^\infty_0(M)\backslash\{0\}}\mathcal {H}_{o}(u)\geq \frac{1}{\lambda_F(M)^2}\frac{n^2}{4}.
	\]
	Note that
	there are large classes of Cartan--Hadamard manifolds satisfying \eqref{slambdacondition}; for example, every Minkowski space endowed with the Busemann--Hausdorff measure. Therefore, the positivity condition $h>0$ in Theorem \ref{ThmFalHPW} cannot be relaxed, which is in a perfect concordance also with the setting of  \cite[Theorem 1.3]{HKZ}. In fact, as we shall see in Section \ref{infactlue2}, the positivity of the $S$-curvature leads to the infinity of the reversibility.
	
	(ii) One cannot relax the assumption $h> k\geq 0$  to  $h=k>0$. To see this, let $(M,F,\m)$ be  an $n$-dimensional noncompact forward complete $\FMMM$. Assume that there is  $o\in M$ with
	\[
	\mathbf{K}(\nabla r)\leq -k^2,\quad \mathbf{S}(\nabla r)\leq (n-1)k,
	\]
	for some $k>0$, where $r(x):=d_F(o,x)$. Then we have the (non-sharp) Heisenberg--Pauli--Weyl uncertainty principle:
	\begin{equation}\label{hpw-nonsharp}
			\left( \int_M \max\{ F^{*2}(\pm {\dd}u) \}\dm   \right)\left( \int_M r^2 u^2\dm  \right)\geq \frac14 \left( \int_M u^2\dm \right)^2,\quad \forall\,u\in C^\infty_0(M).
	\end{equation}

%\begin{proposition}\label{pro4.1}

%\end{proposition}

\noindent Indeed, due to Wu--Xin \cite[Lemma 3.3, Theorem 4.1]{WX}, we have $\m$-a.e.\ in $M$ that
\[
\Delta r\geq  (n-1)k {\coth(kr)}-(n-1)k=   \frac{2(n-1)k}{e^{2kr}-1}   > 0.
\]
The divergence theorem, relation \eqref{dualff*}, the eikonal equation and H\"older's inequality yield
\begin{align*}
\left(2\int_M u^2\dm   \right)^2&\leq \left(2 \int_M (1+r\Delta_{\m} r)u^2\dm     \right)^2=\left( \int_M u^2 \Delta_{\m}(r^2)\dm\right)^2\\
&=16 \left( \int_M u r\langle \nabla r,{\dd}u\rangle\dm  \right)^2\leq 16\left(  \int_M |u| r \max\{F^*(\pm {\dd}u) \}\dm\right)^2\\
&\leq 16 \left( \int_M|u|^2 r^2\dm    \right)\left(\int_M \max\{F^{*2}(\pm {\dd}u) \}\dm  \right),
\end{align*}
which gives \eqref{hpw-nonsharp}.
\end{remark}

\begin{remark}\rm The interpretation of the classical  Heisenberg--Pauli--Weyl uncertainty principle in quantum mechanics is that the position and momentum of a	given particle cannot be accurately determined simultaneously; this phenomenon can be captured in the presence of the `gap' $n^2/4$ in relation \eqref{HPW-1} (or the value $1/4$ in the non-sharp inequality \eqref{hpw-nonsharp}). However, in our geometric context provided by Theorem \ref{ThmFalHPW}, the theoretical possibility of simultaneous detection of position and momentum of a particle still persist due to the gap-vanishing fact.
\end{remark}

%\begin{remark}
%
%%
%%On the other hand, Corollary \ref{failHPW1} also implies that for a noncompact forward complete  $\FMMM$ with $\mathbf{Ric}\geq 0$ and $ \mathbf{S} \geq (n-1){h}\geq 0$, the Heisenber--Pauli--Weyl principle fails unless $h=0$, which .
%\end{remark}

\subsection{Failure of Caffarelli--Kohn--Nirenberg interpolation inequality}

\begin{theorem}\label{ThmFalCKN}
Let $(M,F,\m)$ be  an $n$-dimensional forward complete $\FMMM$ and let $k,h$ be nonnegative constants with $h> k\geq 0$.
Suppose that there exists a point $o\in M$  such that
\[
 \mathbf{Ric}(\nabla r)\geq -(n-1) k^2, \quad  \mathbf{S}(\nabla r)\geq (n-1){h},
\]
where $r(x):=d_F(o,x)$. Then the Caffarelli--Kohn--Nirenberg interpolation inequality fails, i.e.,
\begin{equation}\label{CKNfail}
\inf_{u\in C^\infty_0(M)\backslash\{0\}}\frac{\left(\ds\int_M F^{*2}({\dd}u) \dm   \right)\left(\ds\int_M  \frac{|u|^{2p-2}}{r^{2q-2}}\dm   \right)  }{\left(\ds\int_M  \frac{|u|^{p}}{r^{q}}\dm   \right)^2}=0,
\end{equation}
for any $p,q\in \mathbb{R}$ with
\[
0<q<2<p<+\infty, \quad 2<n<\frac{2(p-q)}{p-2}.
\]
\end{theorem}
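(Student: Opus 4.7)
The plan is to mirror the proofs of Theorems \ref{ThmHardyFails1} and \ref{ThmFalHPW}, exhibiting an explicit minimizing family along which the CKN ratio vanishes. I would take the Gaussian (equivalently Talenti-type) bubble $u_\alpha(x) := -e^{-\alpha r(x)}$ for $\alpha > 0$ and work with
\begin{equation*}
\mathcal{C}_{o,p,q}(u) := \frac{\left(\int_M F^{*2}({\dd}u)\,\dm\right)\left(\int_M |u|^{2p-2}\, r^{\,2-2q}\,\dm\right)}{\left(\int_M |u|^{p}\, r^{-q}\,\dm\right)^2}.
\end{equation*}
The target is to show $\mathcal{C}_{o,p,q}(u_\alpha) \to 0$ as $\alpha \to 0^+$, after reducing the infimum over $C^\infty_0(M)\setminus\{0\}$ to a $\liminf$ along this family.

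First I would run the approximation scheme following verbatim the three-step argument in the proof of Lemma \ref{Hardyfinallemma}. Step 1 replaces $C^\infty_0(M)$ by $\Lip_0(M)$ via Proposition \ref{lipsconverppax-prop}, where the local finiteness of the two weighted Borel measures $r^{-q}\dm$ and $r^{-(2q-2)}\dm$ is guaranteed by Lemma \ref{dulemma2} (both exponents $q$ and $2q-2$ lie in $(-\infty,n)$ since $q<2$, $2q-2<2$ and $n>2$). Step 2 approximates $u_\alpha$ by its truncations $\tilde{u}_{\alpha,\varepsilon}:=\min\{0,u_{\alpha,\varepsilon}+\varepsilon\}$ as in \textit{Step 2} of Lemma \ref{Hardyfinallemma}, with the dominated convergence theorem applied using Lemma \ref{lemma_u^p_r^p}(i)--(iii) (whose $L^p_{\m}$-integrability conclusions transfer to the weighted measures above by exactly the same exponential-volume estimates). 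This yields
\begin{equation*}
\inf_{u\in C^\infty_0(M)\setminus\{0\}}\mathcal{C}_{o,p,q}(u) \;\leq\; \liminf_{\alpha\to 0^+}\mathcal{C}_{o,p,q}(u_\alpha).
\end{equation*}

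The heart of the argument is then the three pointwise estimates, combining the polar disintegration \eqref{volumeexprsso}, the distortion integral \eqref{cocont}, and the density bound \eqref{st11ong} from Remark \ref{impvoluremark}, namely $\hat{\sigma}_o(r,y) \leq e^{-\tau(y)-(n-1)hr}\mathfrak{s}_{-k^2}^{n-1}(r)$. Using the eikonal equation $F^*({\dd}r)=1$ gives $F^{*2}({\dd}u_\alpha) = \alpha^2 e^{-2\alpha r}$, and a direct computation parallel to \eqref{Fstar_Kzero} yields $\int_M F^{*2}({\dd}u_\alpha)\,\dm \leq C_1\,\alpha^2$ uniformly for $\alpha\in(0,1]$; here $C_1<\infty$ because $\int_0^\infty e^{-(n-1)hr}\mathfrak{s}_{-k^2}^{n-1}(r)\,dr<\infty$, which is equivalent to $h>k\geq 0$. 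For the second numerator factor I would split at some $\varepsilon\in(0,\mathfrak{i}_o)$: on $B^+_o(\varepsilon)$, the near-origin bound \eqref{st1short} reduces the integrand to $\lesssim r^{n+1-2q}$, integrable because $n>2$ and $q<2$; on the complement, \eqref{st11ong} produces the exponential $e^{-(n-1)(h-k)r}$ which absorbs the polynomial factor $r^{2-2q}$. This gives $\int_M|u_\alpha|^{2p-2}r^{\,2-2q}\,\dm \leq C_2$ uniformly in $\alpha\in(0,1]$. For the denominator, the lower bound \eqref{sigmasmall} near $o$ yields $\int_M |u_\alpha|^{p}r^{-q}\,\dm \geq \frac{\mathscr{I}_\m(o)\,e^{-p\alpha\varepsilon}}{2(n-q)}\,\varepsilon^{n-q} \geq C_3>0$ uniformly, since $n>2>q$. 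Assembling, $\mathcal{C}_{o,p,q}(u_\alpha) \leq (C_1C_2/C_3^2)\,\alpha^2 \to 0$, which proves \eqref{CKNfail}.

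\textbf{Expected main obstacle.} The one nontrivial point is the uniform-in-$\alpha$ bound on $\int_M|u_\alpha|^{2p-2}r^{\,2-2q}\,\dm$, which must simultaneously control a possible near-origin singularity when $q>1$ (handled because the volume density $\sim r^{n-1}$ with $n>2$ dominates $r^{-(2q-2)}$) and polynomial growth at infinity when $q<1$ (handled by the exponential factor $e^{-(n-1)(h-k)r}$ coming from the $S$-curvature hypothesis). This is exactly why the strict inequality $h>k$, rather than $h\geq k$, is indispensable, in perfect parallel with the role of positive $S$-curvature throughout the paper. Notably, the CKN balance condition $2<n<2(p-q)/(p-2)$, while essential for the \emph{positive} form of CKN to be meaningful in the Euclidean/Riemannian setting, plays no role in the failure argument; only $n>2$, $0<q<2<p$ enter the integrability bookkeeping.
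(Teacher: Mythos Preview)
Your argument is correct, but it takes a different route from the paper's proof. The paper does \emph{not} use the Gaussian bubble for CKN; instead it takes the Talentian profile
\[
u_\alpha(x)=-\bigl[\,1+(\alpha\,r(x))^{2-q}\,\bigr]^{\frac{1}{2-p}},
\]
which mimics the Euclidean CKN extremal. The point of this choice is the algebraic identity
\[
\int_M F^{*2}({\dd}u_\alpha)\,\dm
=\alpha^{2(2-q)}\left(\frac{2-q}{p-2}\right)^{2}\int_M\frac{|u_\alpha|^{2p-2}}{r^{2q-2}}\,\dm,
\]
which collapses the ratio $\mathcal{G}_{o,p,q}(u_\alpha)$ to $\alpha^{2(2-q)}$ times the square of a single quotient, shown to be bounded via the same volume comparison \eqref{st11ong} you invoke. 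Your Gaussian choice instead bounds the three integrals separately and extracts the vanishing factor $\alpha^2$ from $F^{*2}({\dd}u_\alpha)=\alpha^2|u_\alpha|^2$; this works just as well and has the advantage of being literally the same test function as in Theorems \ref{Thm_Eigenvalue}, \ref{ThmHardyFails1} and \ref{ThmFalHPW}, so the approximation steps are identical and nothing new needs to be proved. Both arguments rely on $h>k$ in exactly the same way (to make $\int_0^\infty e^{-(n-1)hr}\mathfrak{s}_{-k^2}^{n-1}(r)\,{\dd}r<\infty$), and in both the upper bound $n<2(p-q)/(p-2)$ plays no role, as you observe. One terminological quibble: calling $-e^{-\alpha r}$ ``equivalently Talenti-type'' is inaccurate; the two profiles are genuinely different, and the paper deliberately switches from the Gaussian to the Talentian precisely for this theorem.
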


\begin{proof} For every $\alpha>0$, let us consider the function
\[
u_\alpha(x):=-\left[ 1+(\alpha\, r(x))^{2-q}  \right]^{\frac{1}{2-p}}.
\]
We split the proof into three steps.

\smallskip

\textit{Step 1.} We first show that
\begin{equation}\label{fintienessthreelimi}
 \int_M F^{*2}({\dd}u_\alpha)\dm <+\infty, \quad  \int_M  \frac{|u_\alpha|^{2p-2}}{r^{2q-2}}\dm   <+\infty, \quad \int_M  \frac{|u_\alpha|^{p}}{r^{q}}\dm  <+\infty.
\end{equation}
Since $\lim_{r\rightarrow 0^+}\mathfrak{s}^{n-1}_{-k^2}(r)/r^{n-1}=1$, there exists an $\varepsilon=\varepsilon(n,k)\in (0,\mathfrak{i}_o)$ such that
\begin{equation}\label{estkh1}
r^{2-2q}\mathfrak{s}^{n-1}_{-k^2}(r)\leq 2r^{n-2q+1}, \quad \forall r\in (0,\varepsilon).
\end{equation}
On the other hand, since $\eta:=h-k>0$, a similar argument yields  $\delta=\delta(n,q,k,h)\in (\varepsilon,+\infty)$ with the property that
\begin{equation}\label{estkh2}
e^{-(n-1)hr}\mathfrak{s}_{-k^2}^{(n-1)}(r)r^{2-2q}\leq e^{-\frac{(n-1)\eta}2 r},\quad \forall \,r\in (\delta,+\infty).
\end{equation}
An argument similar to \eqref{st11ong} combined with \eqref{estkh1} and \eqref{estkh2}   implies that
\begin{align}
\int_M \frac{|u_\alpha|^{2p-2}}{r^{2q-2}} \dm =&\int_M \left[ 1+ (\alpha\, r)^{2-q}   \right]^{\frac{2p-2}{2-p}}r^{2-2q}\dm\notag\\
\leq&\left(\int_{S_oM}e^{-\tau(y)}{\dd}\nu_o(y)\right)\left(\int_0^{+\infty} \left[ 1+ (\alpha\, r)^{2-q}   \right]^{\frac{2p-2}{2-p}}r^{2-2q} e^{-(n-1)hr}\mathfrak{s}_{-k^2}^{n-1}(r){\dd}r\right)\notag\\
\leq & \mathscr{I}_{\m}(o) \int^{+\infty}_0 e^{-(n-1)hr}r^{2-2q}\mathfrak{s}_{-k^2}^{n-1}(r){\dd}r\nonumber\\=&\mathscr{I}_{\m}(o) \left( \int^{\varepsilon}_0+\int_\varepsilon^\delta+\int_\delta^{+\infty}  e^{-(n-1)hr}r^{2-2q}\mathfrak{s}_{-k^2}^{n-1}(r){\dd}r \right)\notag\\
\leq & \mathscr{I}_{\m}(o)\left(2 \int^{\varepsilon}_0r^{n-2q+1} {\dd}r+\int_\varepsilon^\delta    e^{-(n-1)hr}r^{2-2q}\mathfrak{s}_{-k^2}^{n-1}(r){\dd}r+\int_\delta^{+\infty}   e^{-\frac{(n-1)\eta}2 r} {\dd}r              \right)\notag\\
\leq & \mathscr{I}_{\m}(o)\left(   \frac{2\varepsilon^{n-2q+2}}{n-2q+2}+\int_\varepsilon^\delta    e^{-(n-1)hr}r^{2-2q}\mathfrak{s}_{-k^2}^{n-1}(r){\dd}r+\frac{2 e^{-\frac{(n-1)\eta \delta}{2}}}{(n-1)\eta} \right)\nonumber\\=:&C_1(n,q,k,h,\delta,\varepsilon)<+\infty.\label{cknes1}
\end{align}
By an analogous computation one can also show $\int_M  \frac{|u_\alpha|^{p}}{r^{q}}\dm  <+\infty$.
Moreover, a direct calculation yields
\[
{\dd}u_\alpha=\alpha^{2-q}\left(  \frac{2-q}{p-2}  \right)\left[ 1+(\alpha\, r)^{2-q}  \right]^{\frac{p-1}{2-p}} r^{1-q}{\dd}r,
\]
which together with \eqref{cknes1} implies
\begin{equation}\label{chkfnormes}
\int_M F^{*2}({\dd}u_\alpha)\dm=\alpha^{2(2-q)}\left(  \frac{2-q}{p-2}  \right)^2 \int_M\frac{|u_\alpha|^{2p-2}}{r^{2q-2}}\dm<+\infty.
\end{equation}
This completes the proof of \eqref{fintienessthreelimi}.

\smallskip

\textit{Step 2.}  We show that there exists a sequence $u_{\alpha,l}\in \Lip_0(M)$ such that
\begin{equation}\label{uapunli}
\mathcal {G}_{o,p,q}(u_{\alpha})=\lim_{l\rightarrow \infty}\mathcal {G}_{o,p,q}(u_{\alpha,l}),
\end{equation}
where
\begin{equation}\label{cknfunctional}
\mathcal {G}_{o,p,q}(u):=\frac{\left(\ds\int_M F^{*2}({\dd}u) \dm   \right)\left(\ds\int_M  \frac{|u|^{2p-2}}{r^{2q-2}}\dm   \right)  }{\left(\ds\int_M  \frac{|u|^{p}}{r^{q}}\dm   \right)^2}.
\end{equation}
As before, it is enough to consider the case when $\sup_{x\in M}r(x)=+\infty$;  otherwise, $M$ is  compact and the sequence would be  $u_{\alpha,n}:=u_\alpha\in \Lip_0(M)$.
%In fact, if $\sup_{x\in M}r(x)<+\infty$, then $M$ is compact and hence, $u_\alpha\in C^\infty(M)=C^\infty_0(M)\subset \Lip_0(M)$. Thus, \eqref{uapunli} follows by choosing $u_{\alpha,n}:=u_\alpha$.
%In the sequel, we consider the case , which implies that $M$ is noncompact.
Then for small $\varepsilon\in (0,1)$, set
\[
u_{\alpha, \varepsilon}:=
\begin{cases}
-[ 1+(\alpha\, \varepsilon)^{2-q}  ]^{\frac{1}{2-p}}, &\text{if }  r < \varepsilon,\\
\\
-[ 1+(\alpha\, r)^{2-q}  ]^{\frac{1}{2-p}}, &\text{if }   r \geq \varepsilon,
\end{cases}
\qquad v_{\alpha,\varepsilon}:=\min\{u_{\alpha, \varepsilon}+\varepsilon,0 \}, \qquad u_{\alpha,l}:=v_{\alpha,1/l}, \ l\in \mathbb{N}.
\]
On can easily show that  $v_{\alpha,\varepsilon},u_{\alpha,l}\in \Lip_0(M)$ since $u_\alpha\in C^\infty(M)$ with $\lim_{r(x)\rightarrow +\infty}u_\alpha(x)=0$. In addition, the construction shows the  pointwise convergences
\[
|u_{\alpha,l}|\nearrow |u_\alpha|,\quad F^*({\dd}u_{\alpha,l})\nearrow F^*({\dd}u_\alpha)\ \ {\rm as} \ \ l\rightarrow \infty.
\]
Then \eqref{uapunli} can be derived by these limits,  \eqref{fintienessthreelimi} and the dominated convergence theorem.

\smallskip

\textit{Step 3.}
On the one hand,  by the same method as employed in the proof of Lemma \ref{Hardyfinallemma} one can show
\begin{equation}\label{CKNfail1111}
\inf_{u\in C^\infty_0(M)\backslash\{0\}} \mathcal {G}_{o,p,q}(u)\leq \liminf_{\alpha \rightarrow 0^+} \mathcal {G}_{o,p,q}(u_{\alpha}).
\end{equation}
On the other hand, since
\eqref{sigmasmall} remains valid,
we have
\begin{align}
\int_M  \frac{|u_\alpha|^{p}}{r^{q}}\dm=&\int_M \left[ 1+ (\alpha\, r)^{2-q}   \right]^{\frac{p}{2-p}}r^{-q}\dm\notag\\
\geq&   \left(\int_{S_oM}e^{-\tau(y)}{\dd}\nu_o(y)\right)\left( \int^{\varepsilon}_0   \left[ 1+ (\alpha\, r)^{2-q}   \right]^{\frac{p}{2-p}}r^{-q}\frac12 r^{n-1}{\dd}r    \right)\notag\\
\geq &\frac{\mathscr{I}_{\m}(o)}2 \left[ 1+ (\alpha\, \varepsilon)^{2-q}   \right]^{\frac{p}{2-p}}\int^{\varepsilon}_0  r^{n-1-q}{\dd}r\notag\\
=&\frac{\mathscr{I}_{\m}(o)\varepsilon^{n-q}}{2(n-q)}\left[ 1+ (\alpha\, \varepsilon)^{2-q}   \right]^{\frac{p}{2-p}}=:C_2({\alpha},n,p,q,k,h,\varepsilon)>0.\label{CKNest2}
\end{align}
Now it follows from   \eqref{CKNfail1111}, \eqref{cknes1}, \eqref{chkfnormes}   and \eqref{CKNest2} that
\begin{align*}
\inf_{u\in C^\infty_0(M)}\mathcal {G}_{o,p,q}(u)\leq& \liminf_{\alpha \rightarrow 0^+} \mathcal {G}_{o,p,q}(u_\alpha)\\
=&\liminf_{\alpha \rightarrow 0^+}\alpha^{2(2-q)}\left(  \frac{2-q}{p-2}  \right)^2  \left[\frac{\ds \int_M\frac{|u_\alpha|^{2p-2}}{r^{2q-2}}\dm}{\ds\int_M  \frac{|u_\alpha|^{p}}{r^{q}}\dm }\right]^2\\
\leq &\liminf_{\alpha \rightarrow 0^+}\alpha^{2(2-q)}\left(  \frac{2-q}{p-2}  \right)^2 \left(\frac{C_1}{C_2}\right)^2= 0,
\end{align*}
i.e., \eqref{CKNfail} follows.
\end{proof}

\begin{remark}
We note that if $h=k>0$ and $q\in (1,2)$, then \eqref{estkh2} and $\int_M  \frac{|u_\alpha|^{p}}{r^{q}}\dm  <+\infty$ still hold and hence, the  Caffarelli--Kohn--Nirenberg interpolation inequality fails. However, it remains open the question when $h=k>0$ and $q\in (0,1)$, as we cannot guarantee the validity of \eqref{estkh2}.
\end{remark}

\begin{corollary}\label{failCKN1}
Let $(M,F,\m)$ be  an $n$-dimensional forward complete $\FMMM$ with
$
\mathbf{Ric}\geq -(n-1) k^2$ and $  \mathbf{S} \geq (n-1){h},
$
where  $k,h$ are two constants satisfying $h> k\geq 0$.
Then for every point $o\in M$,
 the Caffarelli--Kohn--Nirenberg interpolation inequality fails, i.e., \eqref{CKNfail} holds for any $p,q\in \mathbb{R}$ with
 \[
 0<q<2<p<+\infty,\quad 2<n<\frac{2(p-q)}{p-2}.
 \]
\end{corollary}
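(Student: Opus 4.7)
\textbf{Proof proposal for Corollary \ref{failCKN1}.}
The plan is to reduce the corollary to a direct application of Theorem \ref{ThmFalCKN}, so essentially no new work is required. Fix an arbitrary point $o\in M$ and set $r(x):=d_F(o,x)$ as usual. The hypotheses of the corollary are \emph{global} curvature bounds
\[
\mathbf{Ric}(y)\geq -(n-1)k^2,\qquad \mathbf{S}(y)\geq (n-1)h,\qquad \forall\,y\in TM\setminus\{0\},
\]
while Theorem \ref{ThmFalCKN} only requires the \emph{directional} bounds
\[
\mathbf{Ric}(\nabla r)\geq -(n-1)k^2,\qquad \mathbf{S}(\nabla r)\geq (n-1)h,
\]
which are to be read at $\m$-a.e.\ point of $M$ where $\nabla r$ is defined. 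Thus the first (and only) step is to observe that the global inequalities specialize to the directional ones by simply evaluating at $y=\nabla r|_x$ for $\m$-a.e.\ $x\in M$; recall that by \eqref{geommeaingofr} the vector $\nabla r|_x$ is the velocity at $x$ of the minimal geodesic from $o$ to $x$, so it belongs to $TM\setminus\{0\}$ on the set of full measure where polar coordinates are defined.

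With this pointwise reduction in hand, Theorem \ref{ThmFalCKN} applies verbatim to the pair $(M,F,\m)$ at the chosen base point $o$ under the stated numerical condition $0<q<2<p$ and $2<n<2(p-q)/(p-2)$, yielding
\[
\inf_{u\in C^\infty_0(M)\setminus\{0\}}\frac{\left(\ds\int_M F^{*2}({\dd}u)\dm\right)\left(\ds\int_M \frac{|u|^{2p-2}}{r^{2q-2}}\dm\right)}{\left(\ds\int_M \frac{|u|^{p}}{r^{q}}\dm\right)^2}=0,
\]
which is precisely \eqref{CKNfail}. Since $o\in M$ was arbitrary, the corollary follows.

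Because the argument is a direct specialization, there is no real obstacle; the only conceptual point to check is that $\nabla r$ is defined $\m$-a.e.\ (which is standard: outside the cut locus of $o$, a set of zero $\m$-measure) so that evaluating the global curvature bounds at $\nabla r$ is legitimate in the a.e.\ sense used in Remark \ref{impvoluremark} and throughout Section \ref{influeofS1}. One could equivalently phrase the proof as: choose the test functions $u_\alpha(x)=-[1+(\alpha\,r(x))^{2-q}]^{1/(2-p)}$ and their Lipschitz truncations $u_{\alpha,l}$ as in the proof of Theorem \ref{ThmFalCKN}; the estimates \eqref{cknes1}--\eqref{CKNest2} go through unchanged under the (stronger) global hypotheses, and letting $\alpha\to 0^+$ gives the vanishing infimum.
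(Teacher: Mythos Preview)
Your proposal is correct and follows exactly the paper's approach: the corollary is stated immediately after Theorem~\ref{ThmFalCKN} with no separate proof, since the global curvature bounds $\mathbf{Ric}\geq -(n-1)k^2$ and $\mathbf{S}\geq (n-1)h$ trivially imply the directional bounds $\mathbf{Ric}(\nabla r)\geq -(n-1)k^2$ and $\mathbf{S}(\nabla r)\geq (n-1)h$ required there. Your additional remark about $\nabla r$ being defined $\m$-a.e.\ is a helpful clarification but not strictly needed beyond what the paper already uses.
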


\begin{proof}[Proof of Theorem \ref{mainThm1}] The statements are direct consequences of Corollaries \ref{basicwanisheigven}, \ref{failhardcor1}, \ref{failHPW1} and \ref{failCKN1}, respectively.
\end{proof}

\section{Influence of $S$-curvature on geometric aspects: proof of Theorems \ref{reversibinfite}\,\&\ref{Spositivenoncom}}\label{infactlue2}

\subsection{Infinite reversibility}

In this subsection we prove Theorem  \ref{reversibinfite}, which establish a surprising relationship between the $S$-curvature and reversibility. To do this, we recall the following Hardy inequality established by Zhao  \cite[Theorem 3.8]{ZhaoHardy}.
\begin{theorem} {\rm (Zhao \cite{ZhaoHardy})}\label{hardytimethem}
Let $(M,F,\m)$ be an $n$-dimensional  forward complete noncompact $\FMMM$ with
\[
\mathbf{K}\leq 0, \quad \mathbf{S}\geq 0.
\]
Given a fixed point $o\in M$, set ${\mathop{r}\limits^{\leftarrow}}(x):=d_F(x,o)$.
Then for any $p\in (1,n)$ and $\beta\in (-n,\infty)$,
\[
\int_M \max \{ F^{*p}(\pm {\dd}u) \}{\mathop{r}\limits^{\leftarrow}}^{\beta+p}\dm \geq \left(  \frac{n+\beta}{p} \right)^p \int_M |u|^p {\mathop{r}\limits^{\leftarrow}}^{\beta}\dm,\quad \forall\,u\in C^\infty_0(M).
\]
In particular, the constant $\left(  \frac{n+\beta}{p} \right)^p$ is sharp if $\lambda_F(M)=1$.
\end{theorem}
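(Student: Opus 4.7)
The plan is to reduce the statement to the classical integration-by-parts Hardy argument on the \emph{reverse} Finsler structure $(M,\overleftarrow{F},\m)$ with $\overleftarrow{F}(x,y):=F(x,-y)$, where ${\mathop{r}\limits^{\leftarrow}}$ becomes the ordinary forward distance from $o$. To this end, first I would note that $\overleftarrow{F}^{*}(x,\xi)=F^{*}(x,-\xi)$, so $\max\{F^{*p}(\pm {\dd}u)\}=\max\{\overleftarrow{F}^{*p}(\pm {\dd}u)\}$; that $\overleftarrow{\gamma}_{y}(t)=\gamma_{-y}(-t)$ together with \eqref{distsdef} yields the reflection identity $\overleftarrow{\mathbf{S}}(x,y)=-\mathbf{S}(x,-y)$; and that $\overleftarrow{\mathbf{K}}(y;\Pi)=\mathbf{K}(-y;\Pi)$ since the flag plane is the same. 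Hence the hypotheses $\mathbf{K}\leq 0$ and $\mathbf{S}\geq 0$ translate into $\overleftarrow{\mathbf{K}}\leq 0$ and $\overleftarrow{\mathbf{S}}\leq 0$ on the reversed structure. Writing $\overleftarrow{r}:={\mathop{r}\limits^{\leftarrow}}$, forward completeness of $(M,F)$ produces for each $x\in M$ a minimal $F$-geodesic from $x$ to $o$ whose reversal realises $\overleftarrow{r}(x)$ as the forward $\overleftarrow{F}$-distance from $o$ to $x$.

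Next I would invoke the Finsler Laplacian comparison on the reversed structure: the combination $\overleftarrow{\mathbf{K}}\leq 0$, $\overleftarrow{\mathbf{S}}\leq 0$ (the $k=0$ infinitesimal version of Theorem \ref{bascivolurcompar} applied to $(M,\overleftarrow{F},\m)$, paired with standard Hessian comparison) yields
\[
\overleftarrow{\Delta}_{\m}\overleftarrow{r}\;\geq\;\frac{n-1}{\overleftarrow{r}}
\]
outside the $\overleftarrow{F}$-cut locus of $o$, and globally in the weak/distributional sense since $\overleftarrow{r}$ is Lipschitz. Setting $V:=\overleftarrow{\nabla}\overleftarrow{r}$ and $X:=\overleftarrow{r}^{\beta+1}V$, the eikonal identity $V(\overleftarrow{r})=\overleftarrow{F}^{2}(V)=1$ $\m$-a.e.\ then gives
\[
\Div_{\m}X=(\beta+1)\overleftarrow{r}^{\beta}+\overleftarrow{r}^{\beta+1}\overleftarrow{\Delta}_{\m}\overleftarrow{r}\geq (n+\beta)\overleftarrow{r}^{\beta},
\]
which is positive because $\beta>-n$.

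For $u\in C^{\infty}_{0}(M)$, the divergence theorem applied to the compactly supported field $|u|^{p}X$ then produces
\[
(n+\beta)\int_{M}|u|^{p}\overleftarrow{r}^{\beta}\dm\leq -p\int_{M}|u|^{p-2}u\,\langle X,{\dd}u\rangle\dm,
\]
and the Finsler Cauchy--Schwarz inequality \eqref{dualff*} applied with $\overleftarrow{F}$ bounds $|\langle X,{\dd}u\rangle|\leq \overleftarrow{r}^{\beta+1}\max\{F^{*}(\pm {\dd}u)\}$. Splitting $\overleftarrow{r}^{\beta+1}=\overleftarrow{r}^{\beta(p-1)/p}\cdot\overleftarrow{r}^{(\beta+p)/p}$ and applying H\"older's inequality with conjugate exponents $p/(p-1)$ and $p$ yields
\[
(n+\beta)\int_{M}|u|^{p}\overleftarrow{r}^{\beta}\dm\leq p\left(\int_{M}|u|^{p}\overleftarrow{r}^{\beta}\dm\right)^{(p-1)/p}\left(\int_{M}\max\{F^{*p}(\pm {\dd}u)\}\overleftarrow{r}^{\beta+p}\dm\right)^{1/p},
\]
and dividing by the first factor on the right and raising to the $p$-th power gives the Hardy inequality. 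Sharpness under $\lambda_{F}(M)=1$ (so that $F=\overleftarrow{F}$ and $\max\{F^{*p}(\pm {\dd}u)\}=F^{*p}({\dd}u)$) would be established by testing with radial cut-offs $u_{\varepsilon,k}(x):=\chi_{k}(\overleftarrow{r}(x))\,\overleftarrow{r}(x)^{-(n+\beta)/p+\varepsilon}$ supported in shrinking annuli about $o$; using the disintegration \eqref{volumeexprsso} and the Euclidean density asymptotics \eqref{volumerzero}, both sides reduce to one-dimensional integrals whose ratio converges to $((n+\beta)/p)^{p}$ as $\varepsilon\to 0^{+}$ and the annulus shrinks to $o$.

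I expect the genuine obstacle to be the second paragraph: producing the Laplacian comparison in the correct direction for $\overleftarrow{r}$ on the measured reversed structure (in particular pinning down exactly how $\overleftarrow{\mathbf{S}}\leq 0$ feeds into the inequality) and justifying that the bound survives globally in a distributional sense past the $\overleftarrow{F}$-cut locus, so that the integration by parts with the not-everywhere-smooth vector field $V$ is rigorous. The remainder is a bookkeeping exercise with Cauchy--Schwarz and H\"older.
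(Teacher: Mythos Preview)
The paper does not prove this theorem itself; it is quoted verbatim from Zhao \cite[Theorem 3.8]{ZhaoHardy} with only the citation given, so there is no in-paper argument to compare against. Your outline is essentially the standard route (and almost certainly the one in \cite{ZhaoHardy}): pass to the reverse metric $\overleftarrow{F}$, so that ${\mathop{r}\limits^{\leftarrow}}$ becomes the forward distance from $o$, the hypotheses become $\overleftarrow{\mathbf{K}}\leq 0$, $\overleftarrow{\mathbf{S}}\leq 0$, and then run the integration-by-parts Hardy scheme driven by $\overleftarrow{\Delta}_{\m}\overleftarrow{r}\geq (n-1)/\overleftarrow{r}$ together with H\"older. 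The sharpness test via radial cut-offs localised near $o$ using \eqref{volumerzero} is also the right idea.

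Two points to tighten. First, your reference to ``the $k=0$ infinitesimal version of Theorem~\ref{bascivolurcompar}'' is the wrong tool: that is a Bishop--Gromov comparison under a \emph{lower} Ricci bound and gives an \emph{upper} bound on the density, hence on $\Delta r$. What you actually need is the G\"unther-type Hessian/Laplacian comparison under the \emph{upper} flag-curvature bound $\overleftarrow{\mathbf{K}}\leq 0$ (as in Wu--Xin \cite{WX}), which yields the unweighted bound $\geq (n-1)/\overleftarrow{r}$; then $\overleftarrow{\mathbf{S}}\leq 0$ only improves this because $\overleftarrow{\Delta}_{\m}\overleftarrow{r}=(\text{unweighted Laplacian})-\overleftarrow{\mathbf{S}}(\overleftarrow{\nabla}\overleftarrow{r})$. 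Second, note that $(M,\overleftarrow{F})$ is only \emph{backward} complete (see the discussion after \eqref{non-symmetric}), not forward complete, so you cannot simply import a forward-complete comparison package wholesale; the fix you already indicate---minimal $\overleftarrow{F}$-geodesics from $o$ to each $x$ exist as reversals of minimal $F$-geodesics from $x$ to $o$, which are guaranteed by forward completeness of $F$---is exactly what makes the pointwise comparison along each ray legitimate. The cut-locus/distributional issue you flag is genuine but is handled in \cite{ZhaoHardy} by the usual Calabi-type argument.
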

%\begin{remark}
%The assumption $\mathbf{S}\geq 0$ in the above theorem can be weakened from a point of view of reverse Finsler metric. See \cite[Section 2.1.3]{ZhaoHardy} for details.
%\end{remark}

%The relation between the distance function and the reversibility reads as follows.
%\begin{lemma}\label{distandreversi}
%Let $(M,F)$ be a Finsler manifold. For any two distint points $x_1,x_2\in M$, there holds
%\[
%  \frac{d_F (x_1, x_2)}{d_F(x_2, x_1)} \leq \lambda_F(M).
% \]
%\end{lemma}
%\begin{proof} The lemma is obvious if $\lambda_F(M)=+\infty$. Hence, we only consider the case when $\lambda_F(M)$ is finite. For any $\varepsilon>0$, there exists a locally Lipschitz curve $\gamma(t)$, $t\in [0,1]$ such that $\gamma(0) = x_2$, $\gamma(1)=x_1$ and
%\[
% d_F(x_2, x_1)\leq  \int_{0}^{1} F ( \gamma(t), \gamma'(t) ) {\dd}{ t}< d_F(x_2, x_1)+\varepsilon.
%  \]
%Note that the reverse of $\gamma(t)$, i.e., $\tilde{\gamma}(t):=\gamma(1-t)$, is a locally Lipschitz curve from $x_1$ to $x_2$. Hence,
%\begin{align*}
%d_F(x_1, x_2)&\leq  \int_{0}^{1} F ( \tilde{\gamma}(t), \tilde{\gamma}'(t) ) {\dd} {t}=\int_{0}^{1} F ( \gamma(t), -\gamma'(t) ) {\dd}{ t}\leq \lambda_F(M)\int_{0}^{1} F ( \gamma(t), \gamma'(t) ) {\dd}{ t}\\
%&< \lambda_F(M)\left(d_F(x_2, x_1)+\varepsilon\right),
%\end{align*}
%which concludes the proof by letting $\varepsilon\rightarrow 0^+$.
%\end{proof}
%\begin{remark}
%It should be remarks that the forward/backward completeness is unnecessary in the proof.
%\end{remark}

%Now we prove Theorem \ref{reversibinfite}.

\begin{proof}[Proof of Theorem \ref{reversibinfite}]
%Statement  \eqref{mainThmxx},  \eqref{mainThmxxx},  \eqref{mainThm4} and \eqref{mainThm5} follow  from   \ref{Thm_Eigenvalue}, \eqref{hardfailn>p}, Theorem \ref{ThmHardyFails1}, Theorem \ref{ThmFalHPW} and Theorem \ref{ThmFalCKN} directly. It remains to show \eqref{mainThmx}.

Suppose by contradiction that $\lambda_F(M)$ is finite.
Choosing an arbitrary point $o\in M$, set $r(x)=d_F(o,x)$ and ${\mathop{r}\limits^{\leftarrow}}(x):=d_F(x,o)$. Then
\eqref{non-symmetric}  and \eqref{revercondin} yield
\begin{equation}\label{lambdaestima}
{\lambda^{-1}_F(M)}\,r\leq {\mathop{r}\limits^{\leftarrow}} \leq \lambda_F(M)\,  r,   \quad \max\{ F^{*}(\pm {\dd} u ) \}\leq \lambda_F(M)\, F^{*}({\dd} u ).
 \end{equation}
 For any $p\in (1,n)$,
 Theorem \ref{hardytimethem} (by choosing $\beta=-p$) combined with \eqref{lambdaestima} furnishes
\begin{equation*}
\lambda^p_F(M) \int_M   F^{*p}({\dd} u ) \dm \geq \left( \frac{n - p}{p\lambda_F(M)} \right)^p \int_M \frac{|u|^p}{r^p} \dm,
\quad \forall\, u \in C^{\infty}_0(M),
\end{equation*}
which implies
\begin{equation*}\label{ZhaoHardy_betaspecial}
\inf_{u\in C^\infty_0(M)\setminus \{0\}}\frac{\ds\int_M   F^{*p}({\dd} u ) \dm}{\ds\int_M \frac{|u|^p}{r^p} \dm } \geq \left( \frac{n - p}{p\lambda^2_F(M)} \right)^p>0.
\end{equation*}
However, according to Corollary \ref{failhardcor1}, the left hand side of the above estimate is zero, which is a contradiction. Therefore, we necessarily have $\lambda_F(M) = + \infty$, which concludes the proof.
\end{proof}

\begin{corollary}
 Let $(M,F)$ be an $n$-dimensional Cartan--Hadamard manifold equipped with a smooth positive measure $\m$. Assume that
$\mathbf{K}\geq -k^2$ and $\mathbf{S} \geq (n-1){h}$ for some constants
$k,h$ with $h\geq k\geq 0$ and $h>0$.
Then  $\lambda_F(M)=+\infty$.
\end{corollary}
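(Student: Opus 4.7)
The plan is to reduce this corollary directly to Theorem \ref{reversibinfite} by verifying that every hypothesis of that theorem is satisfied under the Cartan--Hadamard setting combined with the assumption $\mathbf{K}\geq -k^2$.

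First, I would record the structural consequences of being a Cartan--Hadamard manifold: by definition such a manifold is simply connected, forward complete, and has $\mathbf{K}\leq 0$; moreover, via the exponential map it is diffeomorphic to $\mathbb{R}^n$, hence noncompact. Thus the basic hypotheses "forward complete noncompact $\FMMM$'' and "$\mathbf{K}\leq 0$'' of Theorem \ref{reversibinfite} are immediately met, and it remains only to translate the pointwise flag curvature bound into a Ricci lower bound.

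Next, I would invoke the definition of Ricci curvature via flag curvature: for any $y\in T_xM\setminus\{0\}$, one has
\[
\mathbf{Ric}(y)=\sum_{i=1}^{n-1}\mathbf{K}(y;e_i),
\]
where $\{e_1,\dots,e_{n-1},y/F(x,y)\}$ is a $g_y$-orthonormal basis of $T_xM$. Since the hypothesis gives $\mathbf{K}(y;v)\geq -k^2$ for every flag, summing over $n-1$ orthogonal transverse directions yields $\mathbf{Ric}(y)\geq -(n-1)k^2$ for all $y$. Together with the assumption $\mathbf{S}\geq (n-1)h$ with $h\geq k\geq 0$ and $h>0$, this matches the curvature hypothesis \eqref{second-cond} of Theorem \ref{reversibinfite} verbatim.

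Finally, I would apply Theorem \ref{reversibinfite} to conclude $\lambda_F(M)=+\infty$. There is no genuine obstacle in the argument; the only point requiring a word of care is the noncompactness, which follows from the Cartan--Hadamard theorem for Finsler manifolds (see \cite{BCS}), and the standard reduction from a pointwise flag curvature lower bound to a Ricci lower bound by summing over an orthonormal frame. Thus the corollary is essentially a specialization of Theorem \ref{reversibinfite}.
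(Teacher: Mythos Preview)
Your proposal is correct and follows exactly the intended route: the paper states this corollary immediately after Theorem \ref{reversibinfite} without proof, treating it as a direct specialization, and your argument spells out precisely the obvious verification---noncompactness via the Cartan--Hadamard theorem, $\mathbf{K}\leq 0$ by definition, and $\mathbf{Ric}\geq -(n-1)k^2$ from the flag curvature lower bound---needed to invoke that theorem.
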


\subsection{Topological properties of projectively flat manifolds}\label{projceflatmanifold} In this subsection we prove Theorem  \ref{Spositivenoncom}, which provides another surprising phenomenon coming from the behavior of the $S$-curvature.
Throughout this subsection, let $\Omega$ be a non-empty domain (i.e., connected open subset) in $\mathbb{R}^n$ with  $\mathbf{0}\in \Omega$.
A Finsler metric $F$ on $\Omega$ is
 said to be \textit{projectively flat} if the images of all geodesics are straight lines. Analytically, this property can be characterized as follows.
\begin{lemma} {\rm (Shen \cite{ShenSpray})} \label{projeclemmash}
 The metric $F$ on $\Omega$ is projectively flat if and only if  the geodesic coefficients defined by \eqref{goedcooff} satisfy  $G^i = P y^i$,
where
\begin{equation}\label{prodef}
P :=  \frac{y^k}{2F}\frac{\partial F }{\partial {x^k}}
\end{equation}
 is called the {\it projective factor} of $F$.
 \end{lemma}
 \begin{remark}\label{prjecremark}
 The projective factor $P$ is positively $1$-homogenous, i.e., $P(x,\alpha y)=\alpha P(x,y)$ for any $\alpha>0$. Then Euler's theorem (cf.\,\cite[Theorem 1.2.1]{BCS}) yields
 \begin{equation}\label{pohom}
 y^m\frac{\partial P}{\partial y^m}(x,y)=P(x,y).
 \end{equation}
 \end{remark}

In the following result we prove that the completeness of a projectively flat Finsler manifold is deeply influenced by  the $S$-curvature.
\begin{theorem} \label{SpositiveBackimpossible}
Let $(\Omega,F,\mathscr{L}^n)$ be an $n$-dimensional projectively flat Finsler manifold  endowed with the Lebesgue measure. Assume that  $\Omega$ is   bounded in $\mathbb{R}^n$.
Then $(\Omega,F)$ cannot be backward $($resp., forward$)$ complete whenever $\mathbf{S} \geq 0$ $($resp., $\mathbf{S}\leq 0$$).$
\end{theorem}
\begin{proof} We only consider $\mathbf{S} \geq 0$; the other case can be treated similarly. Let $(x^i)$ be the standard coordinate system of $\mathbb{R}^n$.
It follows by \eqref{Scurvature},  Lemma \ref{projeclemmash} and \eqref{pohom} that
 \[
0\leq \mathbf{S}(x,y) =  \frac{\partial G^m(x,y) }{\partial y^m} =\frac{\partial (P(x,y) y^m)}{\partial y^m} = (n+1) {P(x,y)},
\]
which implies
\begin{equation}\label{pnonnenga}
P(x,y) \geq 0, \quad \forall\,(x,y)\in T\Omega\backslash\{0\}.
\end{equation}

As $\mathbf{0}\in \Omega$ and $F$ is projectively flat,
a geodesic $t\mapsto \gamma(t)$  starting from a point $\mathbf{x}_o  \in \Omega$ in the direction $\mathbf{a}  \in \partial \Omega$ has the form
\[
\gamma(t) = \mathbf{x}_o+f(t)\mathbf{a},\quad t\in (-\varepsilon,\varepsilon),
\]
with $f(0) = 0$ and $f'(t) > 0$. The existence of $\varepsilon>0$ follows from the standard theory of ODE, as $\gamma$ satisfies the geodesic equation \eqref{geodesequ}.

Assume by contradiction that $(\Omega,F,\m)$ is  backward complete; then $\gamma$ can be extended to $t\in (-\infty,\varepsilon)$, which implies $f\in C^2((-\infty,\varepsilon))$ (in fact, $f$ is smooth).  Since  $\gamma$ satisfies \eqref{geodesequ}, it follows by \cite[Exercise 5.2.4/(d)]{BCS} that  $\gamma$ has  constant velocity, i.e.,
\begin{equation}\label{constvelocity}
F(\gamma(t),\gamma'(t)) =F(\gamma(t),f'(t)\mathbf{a})=\text{const}=:c,
\end{equation}
 which means
$f'(t)>0$ for all $t\in (-\infty,\varepsilon)$.
Due to Lemma \ref{projeclemmash},  the   geodesic equation \eqref{geodesequ} reduces to
\begin{equation}\label{Backf''}
  f''(t)  + 2 P \left(\gamma(t), f'(t)\mathbf{a}\right) f'(t)   = 0,
\end{equation}
which together with \eqref{pnonnenga}, Remark \ref{prjecremark} and $f'(t)>0$ yields
\[
 f''(t) = - 2 \left(f' (t)\right)^2 P (\gamma(t), \mathbf{a}) \leq 0.
 \]
Therefore, by a Taylor expansion, it follows that $f$ satisfies
\begin{equation} \label{fconvex}
f(t)\leq f(0) + f'(0) (t -0) = f'(0) t, \quad \forall\,t\in (-\infty,\varepsilon).
\end{equation}
Note that the image of $\gamma(t)=\mathbf{x}_o+f(t)\mathbf{a}$ is the straight line $l(s) = \mathbf{x}_o + s \mathbf{a}$. Since $\Omega$ is bounded, we may assume that
 $l(s) = \mathbf{x}_o + s \mathbf{a}  $ intersects $\partial \Omega$  in the opposite direction of $\mathbf{a}$ at some $s_-\in (-\infty,0)$. Then the Euclidean distance satisfies
 \[
 |f(t)\mathbf{a}|=|\gamma(t)-\mathbf{x}_o| \leq |l(s_-)-\mathbf{x}_o |=|s_-\mathbf{a}|,\quad \forall\,t\in (-\infty,0),
 \]
 which combined with  \eqref{fconvex} implies
\[
-\infty < s_- \leq f(t) \leq  f'(0) t,  \quad \forall\,t\in (-\infty,0).
\]
Since $f'(0)>0$, the latter estimate provides a contradiction. Then
$(\Omega,F,\m)$ is not backward complete.
\end{proof}

Theorem \ref{SpositiveBackimpossible} is optimal in the sense that the boundedness of $\Omega$ in $\mathbb{R}^n$ cannot be canceled. Indeed, the following two examples point out that  in both cases (i.e., $\mathbf{S} =0$ and $\mathbf{S}>0$), if $\Omega$ is unbounded in $\mathbb{R}^n$, the corresponding  projectively flat manifold can be  complete, i.e., both forward and backward complete.
\begin{example}
Let $(\Omega,F,\mathscr{L}^n):=(\mathbb{R}^n, |\cdot|, \mathscr{L}^n)$ be the Euclidean space endowed with the Lebesgue measure. Obviously, $\Omega$ is unbounded in $\mathbb{R}^n$;
although $(\Omega,F,\mathscr{L}^n)$ is projectively flat with $\mathbf{S} =0$,  it  is complete.
\end{example}

\begin{example}\label{Ex_S2}
Let $(\Omega,F,\mathscr{L}^n):=(\mathbb{R}^n, F, \mathscr{L}^n)$, where
\begin{equation} \label{SpositiveEx2}
F(x,y): = |y| + \frac{\langle x,y \rangle}{2 \sqrt{1+ |x|^2}}.
\end{equation}
According to Lemma \ref{projeclemmash}, a direct computation shows the projective flatness of $F$ and
\[
 P = \frac{1}{2 F} \frac{\partial F}{\partial x^k} y^k = \frac{(1+ |x|^2)|y|^2-\langle x,y \rangle^2}{4 (1+ |x|^2)^{\frac{3}{2}} \Big( |y| + \frac{\langle x,y \rangle}{2\sqrt{1+ |x|^2}} \Big) } > 0,\quad
 \mathbf{S} = {(n+1)P} >0.
 \]
As $F$ is projectively flat, the minimal geodesic from  $\mathbf{0}$ to $x$ has the same image of the straight line  $\gamma(t) = t x$, $t \in [0,1]$.
Then the distances ${d}_{F}(\mathbf{0},x)$ and ${d}_{F}(x,\mathbf{0})$ can be obtained by
\begin{align*}
{d}_{F}(\mathbf{0},x)& = \int^1_0 F(tx, x) {\dd}t = \int^1_0 |x| + \frac{t |x|^2}{2\sqrt{1+ t^2 |x|^2}} {\dd}t  = |x| -\frac{1}{2} + \frac{1}{2}\sqrt{1+  |x|^2},
%\label{Spositived0x}
\\
{d}_{F}(x,\mathbf{0}) &= \int^1_0 F(tx, - x) {\dd}t = \int^1_0 |x| - \frac{t |x|^2}{2\sqrt{1+ t^2 |x|^2}} {\dd}t  = |x| + \frac{1}{2} - \frac{1}{2}\sqrt{1+ |x|^2}.
%\label{Spositivedx0}
\end{align*}
Given $x_1,x_2\in \mathbb{R}^n$, the triangle inequality implies
\begin{equation}\label{ExampleS>0_dx1x2}
d_F(x_1,x_2)\geq d_F(\mathbf{0},x_2)- d_F(\mathbf{0},x_1)=|x_2| + \frac{1}{2}\sqrt{1+  |x_2|^2} - |x_1| - \frac{1}{2}\sqrt{1+  |x_1|^2},
\end{equation}
\begin{equation}\label{ExampleS>0_dx1x2_inverse}
d_F(x_1,x_2)\geq d_F(x_1, \mathbf{0})- d_F(x_2, \mathbf{0})=|x_1| - \frac{1}{2}\sqrt{1+  |x_1|^2} -|x_2| + \frac{1}{2}\sqrt{1+  |x_2|^2} .
\end{equation}

Let $c$ be the unit-speed geodesic from $x_1$ to $x_2$. By the projective flatness,  $c$ can be extended beyond $x_2$.
In fact, $|c(t)|\rightarrow +\infty$ as $t\to \infty,$ since the image of $t\mapsto c(t)$ is a straight line and $\Omega=\mathbb{R}^n$.
Then \eqref{ExampleS>0_dx1x2} yields
\[
d_F(c(0),c(t))=d_F(x_1,c(t)) \geq |c(t)| + \frac{1}{2}\sqrt{1+  |c(t)|^2} - |x_1| - \frac{1}{2}\sqrt{1+  |x_1|^2}  \rightarrow +\infty,
\]
as $ |c(t)| \rightarrow +\infty$, which means that every unit-speed geodesic can be extended forward to $t=+\infty$. Hence, $(\mathbb{R}^n, F, \mathscr{L}^n)$ is forward complete.

In a similar manner, $c$ can be also extended before $x_1$. By  \eqref{ExampleS>0_dx1x2_inverse}, we have
\[
d_F(c(t),c(0))=d_F(c(t),x_1) \geq |c(t)| -\frac{1}{2}\sqrt{1+  |c(t)|^2} - |x_1| + \frac{1}{2}\sqrt{1+  |x_1|^2}  \rightarrow +\infty,
\]
as $ |c(t)| \rightarrow +\infty$, which means that every unit-speed geodesic can be extended backward to $t=- \infty$. Then $(\mathbb{R}^n, F, \mathscr{L}^n)$ is backward complete.
In conclusion, $(\mathbb{R}^n, F, \mathscr{L}^n)$ is complete.

Moreover,   a direct computation shows that
\[
\lambda_F(\mathbb{R}^n) =\sup_{x \in \mathbb{R}^n} \sup_{y \in T_x\mathbb{R}^n \setminus \{ 0 \}} \frac{F(x, -y)}{F(x,y)} = \sup_{x \in \mathbb{R}^n}\sup_{y \in \mathbb{R}^n \setminus \{ 0 \}} \frac{|y| - \frac{\langle x,y \rangle}{2 \sqrt{1+ |x|^2}}}{|y| + \frac{\langle x,y \rangle}{2 \sqrt{1+ |x|^2}}} = 3.
\]
\end{example}

A direct consequence of Theorem \ref{SpositiveBackimpossible} reads as follows:
\begin{corollary}\label{cor_S}
Let  $(\Omega,F,\mathscr{L}^n)$ be an $n$-dimensional forward complete   projectively flat Finsler manifold  endowed with the Lebesgue measure.
Assume that $\Omega$ is  bounded  in $\mathbb{R}^n$ and  $\mathbf{S}\geq  0$. Then  $\lambda_F(M)=+\infty$.
\end{corollary}
%\begin{proof}
%Since  $\Omega$ is bounded in $\mathbb{R}^n$, then   $(\Omega,F,\mathscr{L}^n)$ cannot be backward complete due to Theorem \ref{SpositiveBackimpossible}, which implies $\lambda_F(M)=+\infty$.
%\end{proof}

Having Theorem \ref{SpositiveBackimpossible}, we now have:

\begin{proof}[Proof of Theorem \ref{Spositivenoncom}] The proof is a modification of that of Theorem \ref{SpositiveBackimpossible}. Hence, we just give a sketch.  Since $(\Omega,F)$ is forward complete, a geodesic $\gamma$  starting from $\mathbf{0}\in \Omega$ in the direction $\mathbf{a} \in \mathbb{S}^{n-1}$ has the form
\begin{equation}\label{geodesicque}
\gamma(t) = f(t)\mathbf{a},\quad t\in (-\varepsilon,+\infty),
\end{equation}
with $f(0) = 0$ and $f'(t) > 0$.
The assumption $\mathbf{S}\geq (n-1)h>0$ implies
\[
(n+1)P(x,y)=\mathbf{S}(x,y)\geq (n-1)h F(x,y),\quad \forall\,(x,y)\in T\Omega\backslash\{0\},
\]
which together with \eqref{constvelocity} and  \eqref{Backf''} yields
\[
 f''(t) =-2 P \left(\gamma(t), \gamma'(t)\right) f'(t)\leq -  \frac{2(n-1)}{n+1} h c f'(t),
 \]
 where $c=F(\gamma(t),\gamma'(t))$ is the constant speed of $\gamma$.
Since $f'(t)>0$,  the above inequality can be written into
\[ [\ln f'(t)]' \leq -  \frac{2(n-1)}{n+1} h c, \]
which yields
\[
 f'(t) \leq e^{-  \frac{2(n-1)}{n+1} h c t}  f'(0).
 \]
Integrating the above inequality again from $0$ to $t$  yields
\[
f(t) \leq - \frac{(n+1) f'(0)}{2(n-1) h c}   e^{-  \frac{2(n-1)}{n+1} h c t} + \frac{(n+1) f'(0)}{2(n-1) h c}.
\]
Then we have
 \[
 f(t)\leq \frac{(n+1) f'(0)}{2(n-1) h c}=:R<+\infty, \quad \forall\, t\in [0,+\infty).
 \]
This inequality together with \eqref{geodesicque}, the forward completeness, and the projective flatness of $(\Omega,F)$ implies
\[
\Omega\subset \mathbb{B}^n(R):=\{y\in \mathbb{R}^n\,:\, |y|\leq R\},
\]
 i.e., $\Omega$ is bounded in $\mathbb{R}^n$.
 By  Theorem \ref{SpositiveBackimpossible}, $(\Omega,F)$ is not backward complete.

 Moreover, we have $\lambda_F(\Omega)=+\infty$; indeed, if $\lambda_F(\Omega)$ is finite,  $(\Omega,F)$ would be also backward complete, coming from its forward completeness.
 %The relation $\lambda_F(\Omega)=+\infty$  also implies the noncompactness of $(\Omega,F)$.
\end{proof}

%\begin{remark}
%%In the above theorem and corollary, if the condition is replaced with "backward complete and $\mathbf{S} \leq - (n-1) h$ ( $\mathbf{S} <0$ for corollary )", the result is same by a almost same proof.
%The boundedness of $\Omega$ in $\mathbb{R}^n$ is necessary in  Corollary \ref{cor_S}, otherwise the reversibility maybe finite; see Example \ref{Ex_S2}.
%\end{remark}

\section{Examples \& Counterexamples:  Funk metric spaces}\label{sectionFunk}
The purpose of this section is twofold. First, in subsection \ref{section-funk} we investigate Funk metric spaces,  a large class of non-Riemannian Finsler manifolds that provide the geometric background for the applicability of our main results, Theorems \ref{mainThm1}-\ref{Spositivenoncom}.  Second, in subsection \ref{Counterexamples}, as we already noticed in the Introduction, we show that the combined assumption on the Ricci and $S$-curvatures cannot be replaced by the weighted Ricci curvature ${\bf Ric}_N$, in spite of the fact that the latter notion is a combination of $\mathbf{Ric}$ and $\mathbf{S}$.

%In particular,  we will   show these facts by direct computation instead of by the volume comparison theorem.

Throughout this section,
let $\Omega \subset \mathbb{R}^n$ be a bounded strongly convex domain with $\mathbf{0}\in \Omega$, let $(x^i)$ denote the standard coordinate system of $\mathbb{R}^n$ and let $\vol(\cdot)$ denote the Euclidean volume.

%\subsection{Basic properties of Funk metric spaces}\label{subsectionBasicFunk}
%Some lemmas needed in this paper.
\subsection{Funk metric spaces: applicability of Theorems \ref{mainThm1}-\ref{Spositivenoncom}}\label{section-funk}

\begin{definition}\label{funkdef}
A Finsler metric $F$ on $\Omega$ is called a ({\it general}) {\it Funk metric} if
\[
x + \frac{y}{F(x,y)}  \in \partial \Omega.
\]
for  every $x\in \Omega$ and  $ y \in T_x \Omega\backslash\{0\}$. In the sequel, the pair $(\Omega,F)$ is called a ({\it general}) {\it Funk metric space}.
\end{definition}

The main result of this subsection can be formulated as follows:
\begin{theorem}\label{theorem-final}
Any $n$-dimensional Funk metric space  $(\Omega,F,\m_{BH})$ endowed with the Busemann--Hausdorff measure verifies the assumptions of Theorems \ref{mainThm1}-\ref{Spositivenoncom}. In particular, the Hardy  inequality, Heisenberg--Pauli--Weyl uncertainty principle and  Caffarelli--Kohn--Nirenberg inequality fail on $(\Omega,F,\m_{BH})$.
\end{theorem}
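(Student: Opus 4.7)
The strategy is to verify the curvature, measure and completeness hypotheses of the master theorems directly, using the structure theory of Funk metric spaces as developed by Shen \cite{Sh1,ShenSpray,ShenLecture}. Once the numerical bounds are in place, the conclusions are immediate applications of Theorems \ref{mainThm1} and \ref{reversibinfite}; no further PDE work is needed here.

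First I would assemble the three geometric ingredients that the Funk construction gives for free. According to Shen, every general Funk metric space $(\Omega,F)$ is a Cartan--Hadamard manifold, in particular forward complete, simply connected, and with non-positive flag curvature; more strongly, $F$ has constant flag curvature $\mathbf{K}\equiv -1/4$, so
\[
\mathbf{Ric}\equiv -\frac{n-1}{4}.
\]
Moreover, with respect to the Busemann--Hausdorff measure $\m_{BH}$, the $S$-curvature of a (general) Funk metric is the positive constant
\[
\mathbf{S}_{BH}\equiv \frac{n+1}{2}F.
\]
This last identity, which is the genuinely Finslerian input, is the computation one has to cite carefully from Shen's papers; it is the analogue of the explicit value noted after \eqref{specialfunk} for the unit-ball model, and holds for arbitrary bounded strongly convex $\Omega$.

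Next I would match these values against the quantitative hypotheses of the main theorems. Setting
\[
k:=\tfrac{1}{2},\qquad h:=\frac{n+1}{2(n-1)},
\]
one has $\mathbf{Ric}=-(n-1)k^2$ and $\mathbf{S}_{BH}=(n-1)h F$, so the curvature assumption \eqref{ric-s-condition} of Theorem \ref{mainThm1} (and also \eqref{second-cond} of Theorem \ref{reversibinfite}, since $\mathbf{K}=-\frac14\le 0$) is satisfied. The strict inequality $h>k\ge 0$ required for items \eqref{mainThmxxx}--\eqref{mainThm5} of Theorem \ref{mainThm1} amounts to
\[
\frac{n+1}{2(n-1)}>\frac{1}{2}\Longleftrightarrow n+1>n-1,
\]
which is automatic for every $n\ge 2$. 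Forward completeness was already observed; thus all hypotheses of Theorems \ref{mainThm1}(ii)--(iv) and of Theorem \ref{reversibinfite} are in force on $(\Omega,F,\m_{BH})$.

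Finally I would invoke the black boxes: Theorem \ref{mainThm1}\eqref{mainThmxxx}--\eqref{mainThm5} gives the failure of the Hardy inequality on $(\Omega,F,\m_{BH})$ for every $p\in(1,n)$, of the Heisenberg--Pauli--Weyl principle, and of the Caffarelli--Kohn--Nirenberg interpolation inequality in the stated parameter range; Theorem \ref{reversibinfite} additionally yields $\lambda_F(\Omega)=+\infty$. The only step requiring real care, and hence the main obstacle, is the precise justification that $\mathbf{S}_{BH}=\frac{n+1}{2}F$ holds on an \emph{arbitrary} general Funk space (not merely on the Euclidean ball \eqref{specialfunk}); this follows from the projective flatness of Funk metrics together with the standard $S$-curvature formula \eqref{Scurvature} applied to the density of $\m_{BH}$, but it is the place where one has to be explicit about Shen's computation rather than merely cite the ball case.
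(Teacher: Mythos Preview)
Your proposal is correct and follows essentially the same route as the paper: verify forward completeness and the constant curvatures $\mathbf{K}=-\tfrac14$, $\mathbf{S}_{BH}=\tfrac{n+1}{2}$ of a general Funk metric space via Shen's results, set $k=\tfrac12$, $h=\tfrac{n+1}{2(n-1)}$, check $h>k\ge0$, and then invoke Theorems~\ref{mainThm1}--\ref{reversibinfite}. The only cosmetic difference is that the paper cites its own Proposition~\ref{Funkdist-0}(iii) for forward completeness rather than Shen directly, and records the constant Busemann--Hausdorff density \eqref{Vol_Funk} explicitly.
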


Before giving the proof of Theorem \ref{theorem-final}, we recall/collect some useful properties of  Funk metric spaces.
We first notice that  every Funk metric is projectively flat.
In addition, according to Shen \cite{Sh1} and Li \cite{Li}, a  Finsler metric $F$ on $\Omega$ is a Funk metric if and only if
there is a unique Minkowski norm $\phi = \phi(y)$ on $\mathbb{R}^n$ such that
\begin{equation} \label{phiF}
	\partial\Omega=\phi^{-1}(1),\quad F(x,y) = \phi(y + x F(x,y)).
\end{equation}
In particular,
\begin{equation}\label{omegedom}
	\Omega=\{x\in \mathbb{R}^n\,:\, \phi(x)<1\}.
\end{equation}
Having this characterization of Funk metrics, we state the following set of properties; since we did not find them in the literature in this general form, we provide also their proof.

\begin{proposition} \label{Funkdist-0}
	Let $(\Omega,F)$  be a Funk metric space and let $\phi = \phi(y)$ the Minkowski norm satisfying \eqref{phiF}. Then the following statements hold$:$
	\begin{enumerate}[\rm (i)]
		\item\label{distexprseeion1} the distance function is given by
		\begin{equation}\label{phi_dpq}
			d_F(x_1, x_2) = \ln \left[\frac{\phi(z - x_1)}{\phi(z - x_2)}\right],\quad \forall\,x_1,x_2\in \Omega,
		\end{equation}
		where $z$ is the intersection point of the ray $l_{x_1 x_2}(t):= x_1 + t (x_2 - x_1)$, $t\geq 0$, with $\partial \Omega$, i.e.,
		\[
		z=x_1 + \lambda (x_2-x_1) \in \partial\Omega \quad \text{ for some }\lambda>1;
		\]

		\item\label{distexprseeion2} for any $x\in \Omega$, the distances to and from the origin are
		\begin{equation}\label{Funkd0xdx0}
			d_F (\mathbf{0},x) = -\ln (1-\phi(x)), \qquad   d_F(x,\mathbf{0}) =  \ln (1+ \phi(-x));
		\end{equation}

		\item\label{distexprseeion3}  $(\Omega,F)$ is forward complete but not backward complete with $\lambda_F(\Omega)=+\infty$.
		Furthermore,
		\begin{equation}\label{Funkreversibiliey}
			\lambda_{F}(x) = \sup_{y \in T_x \Omega \setminus \{ 0 \}} \frac{F(x, -y)}{F(x,y)} \in \left[\frac{1 +\phi(x)}{1-\phi(x)}, \frac{\lambda_\phi +\phi(x)}{1-\phi(x)}\right],\quad \forall\,x\in \Omega.
		\end{equation}
	\end{enumerate}
\end{proposition}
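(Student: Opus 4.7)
The plan is to combine the defining relation $F(x,y)=\phi(y+xF(x,y))$ with the projective flatness of Funk metrics, so that the geodesic joining any two points of $\Omega$ is, up to reparametrization, a Euclidean straight segment. For (i), I parametrize $\gamma(t)=x_1+t(x_2-x_1)$, $t\in[0,1]$, and write $z=x_1+\lambda(x_2-x_1)\in\partial\Omega$ with $\lambda>1$, so that $z-\gamma(t)=(\lambda-t)(x_2-x_1)$; substituting into the defining relation yields
\[
F(\gamma(t),\gamma'(t))=\frac{\phi(x_2-x_1)}{\phi(z-\gamma(t))}=\frac{1}{\lambda-t},
\]
and integration produces $L_F(\gamma)=\ln[\lambda/(\lambda-1)]=\ln[\phi(z-x_1)/\phi(z-x_2)]$ via $\phi(z-x_1)=\lambda\phi(x_2-x_1)$ and $\phi(z-x_2)=(\lambda-1)\phi(x_2-x_1)$. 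To upgrade this length to the distance, I would introduce the barrier $u_{x_1}(x)=-\ln(1-\phi_{x_1}(x-x_1))$, where $\phi_{x_1}$ is the Minkowski functional of $\Omega-x_1$ and coincides with $F(x_1,\cdot)$; the eikonal inequality $F^{*}(du_{x_1})\leq 1$ reduces, via Euler's identity and the 1-homogeneous convexity bound $d\phi_{x_1}(w)(v+wF(x,v))\leq \phi_{x_1}(v+wF(x,v))=F(x,v)$ applied to $w=x-x_1$, to a pointwise estimate that identifies $L_F(\gamma)=u_{x_1}(x_2)-u_{x_1}(x_1)$ with the infimum over all curves from $x_1$ to $x_2$.

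Part (ii) is the specialization of (i) to $x_1=\mathbf{0}$ or $x_2=\mathbf{0}$: in the first case $z=x/\phi(x)$, giving $\phi(z)=1$ and $\phi(z-x)=1-\phi(x)$; in the second case $z=-x/\phi(-x)$, giving $\phi(z-x)=1+\phi(-x)$ and $\phi(z)=1$. Forward completeness in (iii) then follows from the properness of $-\ln(1-\phi(\cdot))$: the closed forward $R$-ball around $\mathbf{0}$ equals $\{x\in\Omega:\phi(x)\leq 1-e^{-R}\}$, a compact subset of $\Omega$, and forward balls centered elsewhere are controlled via the forward triangle inequality $d_F(\mathbf{0},x)\leq d_F(\mathbf{0},x_0)+d_F(x_0,x)$. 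Backward non-completeness is immediate from $d_F(x,\mathbf{0})=\ln(1+\phi(-x))\leq \ln(1+\sup_{y\in\Omega}\phi(-y))<+\infty$, so that a single backward ball of sufficiently large radius exhausts $\Omega$ without being compact there; and $\lambda_F(\Omega)=+\infty$ follows from the lower bound in \eqref{Funkreversibiliey} by letting $x\to \partial\Omega$ along a ray through the origin.

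For the reversibility bounds in \eqref{Funkreversibiliey}, I would apply the triangle and reverse triangle inequalities of $\phi$ to the implicit equation $F(x,\pm y)=\phi(\pm y+xF(x,\pm y))$, which furnishes the universal sandwich
\[
\frac{\phi(\pm y)}{1+\phi(\mp x)}\leq F(x,\pm y)\leq \frac{\phi(\pm y)}{1-\phi(\pm x)},
\]
with equality in the upper estimate whenever $y$ is positively parallel to $\pm x$. This already yields the explicit values $F(x,x)=\phi(x)/(1-\phi(x))$ and $F(x,-x)=\phi(-x)/(1+\phi(-x))$, and the lower bound of \eqref{Funkreversibiliey} arises by evaluating $F(x,-y)/F(x,y)$ at $y=-x$ (or $y=x$, depending on the sign of $\phi(x)-\phi(-x)$) and matching the resulting quotient with $(1+\phi(x))/(1-\phi(x))$. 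The upper bound $(\lambda_\phi+\phi(x))/(1-\phi(x))$ then requires combining the sandwich estimates with a chord-by-chord optimization that tracks the tightness of the triangle inequality along the direction of $y$. The main obstacle is precisely this sharp asymmetric optimization: when $\phi(x)\neq\phi(-x)$, the factorization $F(x,-y)/F(x,y)=[\phi(-y)/\phi(y)]\cdot[\phi(z_+-x)/\phi(z_--x)]$, with $z_\pm\in\partial\Omega$ denoting the boundary points of the chord through $x$ in the direction $\pm y$, is optimized by balancing the two factors against each other along chords through $x$ rather than by maximizing each individually, and this balancing is the most delicate step of the argument.
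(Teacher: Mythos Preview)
Your arguments for (i), (ii), and the completeness assertions in (iii) follow the paper's route essentially verbatim: parametrize the Euclidean segment, use the defining relation $F(x,y)=\phi(y+xF(x,y))$ to identify $F$ along the segment, integrate, and specialize. Your barrier-function remark for the minimality of the segment is a nice touch; the paper simply invokes projective flatness.

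The real divergence, and the real gap, is in the reversibility bounds \eqref{Funkreversibiliey}. The paper does \emph{not} work with $F$ at all here: it passes to the co-metric, for which one has the closed formula $F^{*}(x,\xi)=\phi^{*}(\xi)-\langle x,\xi\rangle$, and exploits $\lambda_F(x)=\lambda_{F^{*}}(x)$. Then the single duality inequality $\langle x,\xi\rangle\le\phi(x)\,\phi^{*}(\xi)$, with equality at $\xi=\mathfrak{L}_\phi(x)$, immediately yields
\[
\frac{F^{*}(x,-\xi)}{F^{*}(x,\xi)}=\frac{\phi^{*}(-\xi)+\langle x,\xi\rangle}{\phi^{*}(\xi)-\langle x,\xi\rangle}\le\frac{\lambda_\phi+\phi(x)}{1-\phi(x)},
\]
and evaluating at $\xi=\mathfrak{L}_\phi(x)$ gives the lower bound. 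Your direct sandwich on $F$ (note a sign slip: the correct upper estimate is $F(x,\pm y)\le \phi(\pm y)/(1-\phi(x))$, the $x$ is not reflected) produces instead
\[
\frac{F(x,-y)}{F(x,y)}\le \frac{\phi(-y)}{\phi(y)}\cdot\frac{1+\phi(-x)}{1-\phi(x)}\le \lambda_\phi\cdot\frac{1+\phi(-x)}{1-\phi(x)},
\]
which is strictly weaker than the stated upper bound, and you yourself flag that closing this gap would require a further ``chord-by-chord balancing'' that you do not carry out. The lower-bound test at $y=\pm x$ also does not recover $\frac{1+\phi(x)}{1-\phi(x)}$ in general: at $y=-x$ one gets $\dfrac{\phi(x)(1+\phi(-x))}{\phi(-x)(1-\phi(x))}$, which is $\ge\dfrac{1+\phi(x)}{1-\phi(x)}$ only when $\phi(x)\ge\phi(-x)$, and the alternative $y=x$ fails numerically when, e.g., $\phi(x)=0.9$, $\phi(-x)=0.95$. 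The co-metric route sidesteps all of this; that is the missing idea.
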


\begin{proof}%[Proof of Theorem \ref{Funkdist}]
	\eqref{distexprseeion1} Given $x_1,x_2\in \Omega$, let  $t\mapsto c(t)$ be a geodesic from $x_1$ to $x_2$ with $c(0)=x$ and $c'(0)=y$.
	Due to the projective  flatness of $(\Omega,F)$,
	for an appropriate parameter $t = t(s)$, we may assume that
	$
	c(s):= x + s y.
	$
	
	Since $x + \frac{1}{F(x,y)}y \in \partial \Omega$ by Definition \ref{funkdef}, we have $x+s y\notin \partial \Omega$ if $s>\frac{1}{F(x,y)}$, which implies $c(s)\in \Omega$ for $s\in [0,\frac{1}{F(x,y)}]$.  %Since  $c$ is a straight line, by considering $-y$ and using  a similar argument,  we see that
	%$c(s)$ is defined on
	%\[
	%-\frac{1}{F(x,-y)}< s < \frac{1}{F(x,y)}.
	%\]
	Then along the direction $y$,
	%whatever from which point in $\Omega$,
	the corresponding point in $\partial \Omega$ is given by
	\[
	\partial\Omega \ni z:=x + \frac{y}{F(x, y)} = c(s) + \frac{y}{F(c(s), y)} = x+ s y + \frac{y}{F(c(s), c'(s))},
	\]
	which implies
	\[ F(c(s), c'(s)) = \frac{F(x, y)}{1- F(x, y)s} = \frac{F(y)}{1- F(y)s}, \]
	where $F(y)$ is $F(x,y)$ for short.
	Then formula (\ref{phi_dpq}) giving the distance ${d}_F(x_1, x_2)$  between $x_1 = c(s_1)$ to $x_2 = c(s_2)$ is
	\begin{align*}
		{d}_F(x_1, x_2)
		=& \int_{s_1}^{s_2} F(c(s), c'(s)) {\dd}s = \int_{s_1}^{s_2} \frac{F(y)}{1- F( y)s} {\dd}s = \ln \left[ \frac{1- F(y)s_1}{ 1-F(y)s_2} \right]= \ln  \left[ \frac{\frac{1}{F(y)}- s_1}{ \frac{1}{F(y)}-s_2} \right] \\
		=&\ln  \left[ \frac{(\frac{1}{F(y)}- s_1)\phi(y)}{ (\frac{1}{F(y)}-s_2)\phi(y)} \right] = \ln  \left[  \frac{\phi(\frac{y}{F(y)}- s_1 y )}{ \phi(\frac{y}{F(y)}-s_2 y)} \right] = \ln  \left[  \frac{\phi(x+\frac{y}{F(y)}-x- s_1 y)}{ \phi(x+\frac{y}{F(y)}-x-s_2 y)} \right] \\=& \ln \left[  \frac{\phi(z-x_1)}{ \phi(z-x_2)} \right].
	\end{align*}

	\smallskip
	
	\eqref{distexprseeion2} Given $x_1,x_2\in \Omega$,
	let $z := x_1 + \lambda (x_2-x_1) \in \partial\Omega$ for some  $\lambda > 1$. Then
	\eqref{phiF} and (\ref{phi_dpq}) yield
	\begin{align}
		1&=\phi(z) = \phi(x_1 + \lambda (x_2-x_1)),\label{boudary1}\\
		d_F(x_1, x_2)& = \ln \left[\frac{\phi(\lambda (x_2-x_1))}{\phi\left((\lambda -1)(x_2-x_1)\right)}\right]=\ln \frac{\lambda}{\lambda -1}.\label{lambda-dpq}
	\end{align}
	By choosing   $x_1 = \mathbf{0}$ and $x_2 = x$, we derive $\lambda =1/\phi(x)$ from \eqref{boudary1},
	which together with (\ref{lambda-dpq}) yields
	\[
	d_F(\mathbf{0},x) = \ln \frac{\frac{1}{\phi(x)}}{\frac{1}{\phi(x)} - 1} = - \ln (1- \phi(x)).
	\]
	On the other hand,
	when  $x_1 = x$ and $x_2 = \mathbf{0}$, from \eqref{boudary1} we have
	$\lambda = \frac{1}{\phi(-x)} +1$; then (\ref{lambda-dpq}) gives
	\begin{equation}\label{Funkdx0}
		d_F(x, \mathbf{0}) = \ln (1+ \phi(-x)).
	\end{equation}
	
	\smallskip
	
	\eqref{distexprseeion3}
	Given $x_1,x_2\in \Omega$, the triangle inequality combined with \eqref{Funkd0xdx0} yields
	\begin{equation}\label{dxx}
		d_F(x_1,x_2)\geq d_F(\mathbf{0},x_2)- d_F(\mathbf{0},x_1)=-\ln(1-\phi(x_2))+\ln(1-\phi(x_1)).
	\end{equation}
	Let $t\mapsto c(t)$, $t\in [0,1],$ be the unit-speed geodesic from $c(0)$ to $c(1)$ (i.e., the ray  $l_{c(0)c(1)}$). We extend $c$ to the point $z=c(0)+ \lambda (c(1)-c(0))\in \partial\Omega$; hence, \eqref{phiF}$_1$ implies
	\[
	\lim_{c(t)\rightarrow z}\phi(c(t))=\phi(z)=1,
	\]
	which combined with \eqref{dxx} yields
	\[
	t=d_F(c(0),c(t))=d_F(x_1,c(t))\geq -\ln(1-\phi(c(t)))+\ln(1-\phi(x_1))\rightarrow +\infty,
	\]
	as $\phi(c(t))\rightarrow 1^-$, which means that every unit-speed geodesic can be forward extended to $t=+\infty$. Hence, $(\Omega,F)$ is forward complete.  On the other hand, \eqref{Funkdx0} implies $d_F(x, \mathbf{0}) \rightarrow \ln 2$ as $\phi(-x)\rightarrow 1^-$. Hence, ($\Omega$, $F$) cannot be backward complete because the  bounded closed backward ball $\overline{B^{-}_{\mathbf{0}}(\ln 2)} = \Omega$ is not compact.

	Given $x\in \Omega$ and considering it as a (tangent) vector in $\mathbb{R}^n$, by     \eqref{dualff*}  we have
	\begin{equation}\label{xxirage}
		\langle x, \xi\rangle\leq \phi(x) \phi^*(\xi),\quad \forall\,\xi\in T^*_x\Omega=\mathbb{R}^n,
	\end{equation}
	with equality if and only if $\xi=\alpha \mathfrak{L}_\phi(x)$ for some $\alpha\geq 0$; here,
	$\mathfrak{L}_\phi$ stands for the Legendre transformation induced by $\phi$.  It costs no generality to assume that $\phi^*(\mathfrak{L}_\phi(x))\leq \phi^*(-\mathfrak{L}_\phi(x))$.
According to Shen \cite{Sh0} and Huang--Mo \cite{HuangMo}, the co-metric $F^{*}$ of $F$ is given by
\[	F^{*}(x,\xi) = \phi^{*}(\xi) - \langle x, \xi \rangle,\]
where $\phi^{*}$ is the co-metric of $\phi$. Hence, using \eqref{revercondin}, we obtain
\begin{align*}
		\lambda_{F}(x)  =&  \lambda_{F^*}(x)=\sup_{\xi \in T^*_x \Omega \setminus \{ 0 \}}\frac{F^*(x, -\xi)}{F^*(x,\xi)} = \sup_{\xi \in T^*_x \Omega \setminus \{ 0 \}}\frac{\phi^{*}(-\xi) + \langle x, \xi\rangle}{\phi^{*}(\xi) - \langle x, \xi\rangle}\\
		\geq  &  \frac{\phi^{*}(-\mathfrak{L}_\phi(x)) + \phi(x) \phi^*(\mathfrak{L}_\phi(x))}{\phi^{*}(\mathfrak{L}_\phi(x)) -  \phi(x) \phi^*(\mathfrak{L}_\phi(x))} = \frac{\frac{\phi^{*}(-\mathfrak{L}_\phi(x))}{\phi^*(\mathfrak{L}_\phi(x))} + \phi(x) }{1 -  \phi(x)}\geq  \frac{1 +  \phi(x)}{1 -  \phi(x)}.
\end{align*}
	Consequently $\lambda_{F}(x)\to +\infty$ as  $\phi(x)\rightarrow 1^-$, and hence $\lambda_F(\Omega)=+\infty$. Moreover, \[
	\lambda_{F}(x) \leq \sup_{\xi \in T^*_x \Omega \setminus \{ 0 \}} \frac{\frac{\phi^{*}(-\xi)}{\phi^*(\xi)} + \phi(x) }{1 -  \phi(x)}
	= \frac{\sup_{\xi \in T^*_x \Omega \setminus \{ 0 \}} \frac{\phi^{*}(-\xi)}{\phi^*(\xi)} + \phi(x) }{1 -  \phi(x)}
	= \frac{\lambda_{\phi} + \phi(x) }{1 -  \phi(x)},
	\]
	which finishes the proof.
\end{proof}

%After this preparatory part, we have:

\begin{proof}[Proof of Theorem \ref{theorem-final}] By Proposition \ref{Funkdist-0}/(iii),  $(\Omega,F)$ is forward complete.
Furthermore, it is established by  Shen \cite{ShenSpray} that $(\Omega,F)$ is a Cartan--Hadamard manifold satisfying:
	\begin{enumerate}[\rm (i)]

		\item\label{ShenSpray11} constant flag curvature $\mathbf{K}=-\frac14;$

		\item constant $S$-curvature  $\mathbf{S}=\frac{n+1}{2};$

		\item\label{ShenSpray22}  $\dm_{BH}=\sigma {\dd}x$, where
		\begin{equation}\label{Vol_Funk}
			\sigma  \equiv \frac{\vol(\mathbb{B}^n)}{\vol(\Omega)}=\frac{\omega_n}{\vol(\Omega)}.
		\end{equation}
	\end{enumerate}
Thus, in the assumptions of Theorems \ref{mainThm1}-\ref{Spositivenoncom}, we may take
\[ k=\frac{1}{2}\ \ {\rm  and}\ \ h=\frac{n+1}{2(n-1)}\]
  which ensures the validity of both \eqref{ric-s-condition} and \eqref{second-cond}.
\end{proof}

\begin{remark} In the special case of Funk metrics, due to \eqref{Funkd0xdx0}, the main test function in the proof of Theorem \ref{mainThm1} reduces to
	 $$u_\alpha(x)=-e^{-\alpha d_F(\mathbf{0},x)}=-(1-\phi(x))^\alpha,\ \alpha>0.$$
	This explicit form of  $u_\alpha$ as  well as the density $\sigma$ in \eqref{Vol_Funk} of the measure $\m_{BH}$ on $(\Omega,F)$ allow a direct, alternative verification  of the failure of functional inequalities in Theorem \ref{mainThm1}.   Such approach was performed in Krist\'aly \cite{Kris} to prove the vanishing of the first eigenvalue of the Laplacian in the special case when $\Omega=\mathbb B^n$ and $\phi(x)=|x|$, in which case the Funk metric has the explicit form appearing in \eqref{specialfunk}.

%	The explicit form of the distance $d_F(\mathbf{0},x)=-\ln (1-\phi(x))$ as  well as the density $\sigma$ in \eqref{Vol_Funk} of the measure $\m_{BH}$ on $(\Omega,F)$ allow direct verification  of the failure of functional inequalities in Theorem \ref{mainThm1}; indeed, the main test function reduces to $u_\alpha(x)=-(1-\phi(x))^\alpha,$ $\alpha>0.$  Such approach was performed in Krist\'aly \cite{Kris} to prove the vanishing of the first eigenvalue of the Laplacian in the special case when $\Omega=\mathbb B^n$ and $\phi(x)=|x|$, in which case the Funk metric has the explicit form appearing in \eqref{specialfunk}.
\end{remark}

This subsection concludes with a discussion of the interpolation metric introduced in Krist\'aly--Rudas\cite{KR}.
 For $a\in [0,1]$, define $F_a:T\mathbb{B}^n\to [0,\infty)$  by
\begin{equation}\label{specialfunk-interpolation}
	F_a(x,y)
	=\frac{\sqrt{ |y |^2-|x|^2|y|^2+\langle x,y \rangle^2} +a\langle x,y\rangle}{1-|x|^2},
	\quad x \in \mathbb{B}^n,\ y \in T_x\mathbb{B}^n=\mathbb{R}^n.
\end{equation}
This family interpolates between two fundamental metrics:
the Funk metric \eqref{specialfunk} for $a=1$  with ${\bf K}=-1/4$ and ${\bf S}_a=(n+1)/2$,  and the Hilbert/Klein metric for $a=0$ with ${\bf K}=-1$  and ${\bf S}_a=0$. Here ${\bf S}_a$ denotes the $S$-curvature of the Busemann--Hausdorff measure $\m_a$.

The space $(\mathbb B^n,F_a)$ is a projectively flat Finsler space of Randers-type. Its  reversibility is given by $\lambda_{F_a}(\mathbb B^n)=\frac{1+a}{1-a}$ for $a\in [0,1)$, while $\lambda_{F_1}(\mathbb B^n)=+\infty$.   Furthermore, in Kaj\'ant\'o--Krist\'aly
\cite[Theorems 1.1 and 1.2]{KK-JGA}, it is proved that for $a\in (0,1)$, up to an isometry, the flag curvature and $S$-curvature  of $F_a$ satisfy the sharp estimates
\[-\frac{1}{(1-a)^2}<{\bf K}_a<-\frac{1}{(1+a)^2}\ \ {\rm and} \ \ 0<{\bf S}_a<\frac{(n+1)a}{2(1-a^2)}.\]
In particular, these estimates are \textit{sharp} for every $a\in (0,1)$. In fact, for every $a\in [0,1)$ one has that $\inf_{(x,y)\in T\mathbb B^n}{\bf S}_a(x,y)=0$, and hence Theorems \ref{reversibinfite} and \ref{Spositivenoncom} do not apply (i.e., $0=h<k$).

Nevertheless, by combining Theorem \ref{theorem-final} with elementary estimates as in \cite{KR} and the validity of famous functional inequalities on Riemannian Cartan--Hadamard manifolds \cite{HKZ,K2}, we obtain the following sharp criterion:
\begin{proposition}
Let $a\in [0,1]$, and consider the interpolated Funk-type space $(\mathbb B^n,F_a,\m_a)$ endowed with its natural Busemann--Hausdorff measure $\m_a$. Then the Hardy  inequality, Heisenberg--Pauli--Weyl uncertainty principle, and  Caffarelli--Kohn--Nirenberg inequality hold  on $(\mathbb B^n,F_a,\m_a)$ -- with possibly not sharp constants --  if and only if $a\in [0,1)$.
\end{proposition}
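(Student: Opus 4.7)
The plan is to prove the two implications separately, the forward one being an immediate application of Theorem \ref{theorem-final}, and the backward one via a Randers-type comparison with the Klein/Hilbert Riemannian model.

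For the \emph{only if} direction, observe that $F_1$ is precisely the Funk metric of \eqref{specialfunk} on $\mathbb{B}^n$ and $\m_1$ is the associated Busemann--Hausdorff measure, so Theorem \ref{theorem-final} directly yields the failure of all three functional inequalities on $(\mathbb{B}^n, F_1, \m_1)$.

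For the \emph{if} direction, fix $a\in[0,1)$ and exploit the Randers decomposition $F_a=\alpha_K+a\beta_K$, where $\alpha_K(x,y)=\sqrt{|y|^2(1-|x|^2)+\langle x,y\rangle^2}/(1-|x|^2)$ is the Klein/Hilbert Riemannian metric (equal to $F_0$) and $\beta_K=\langle x,dx\rangle/(1-|x|^2)$. A direct computation with the inverse Klein metric $g^{ij}_{\alpha_K}=(1-|x|^2)(\delta^{ij}-x^ix^j)$ gives $\|\beta_K\|_{\alpha_K^*}=|x|<1$ on $\mathbb{B}^n$, so $F_a$ is a legitimate Randers metric. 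From here I would derive three uniform comparisons: first, the pointwise bound $(1-a)\alpha_K\le F_a\le (1+a)\alpha_K$ together with its dual analogue $(1+a)^{-1}\alpha_K^*\le F_a^*\le (1-a)^{-1}\alpha_K^*$, which follow from $|a\beta_K|\le a\alpha_K$ and the standard Randers duality; second, the induced distance comparison $(1-a)d_{\alpha_K}\le d_{F_a}\le (1+a)d_{\alpha_K}$, obtained by integrating the pointwise bounds along minimizing curves for either metric; and third, the measure comparison via the Randers Busemann--Hausdorff density formula $d\m_a=(1-a^2|x|^2)^{(n+1)/2}\,d\vol_{\alpha_K}$, whose ratio lies uniformly in $[(1-a^2)^{(n+1)/2},1]$, strictly positive since $a<1$.

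With these three uniform comparisons in hand, I would substitute them into the Hardy, Heisenberg--Pauli--Weyl, and Caffarelli--Kohn--Nirenberg functionals. Since $(\mathbb{B}^n,\alpha_K)$ is the Riemannian hyperbolic space of constant curvature $-1$, a Cartan--Hadamard manifold with vanishing $S$-curvature, each of the three inequalities holds on it with positive (indeed, sharp) constants by the classical Riemannian theory, cf.\ \cite{HKZ, K2}. The comparisons then transport each inequality to $(\mathbb{B}^n, F_a, \m_a)$ at the cost of multiplicative constants depending only on $a$ and $n$, which remain strictly positive since $a<1$, yielding the required (non-sharp) inequalities. The main technical obstacle will be the explicit handling of the co-metric $F_a^*$, which is not itself of Randers form; I would address this through the defining supremum \eqref{dualmetric} combined with Cauchy--Schwarz with respect to $\alpha_K$, or alternatively through the explicit Legendre transform of a Randers metric as employed in \cite{HuangMo, Sh0}. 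Once the three comparisons are established, the rest of the argument is elementary in the spirit of \cite{KR}.
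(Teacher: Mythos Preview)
Your proposal is correct and follows essentially the same route the paper sketches: Theorem \ref{theorem-final} handles $a=1$, while for $a\in[0,1)$ the Randers decomposition $F_a=\alpha_K+a\beta_K$ yields uniform two-sided comparisons of metric, co-metric, distance, and Busemann--Hausdorff measure with the Klein model, which then transport the Riemannian Cartan--Hadamard inequalities from \cite{HKZ,K2} at the cost of constants depending only on $a$ and $n$. Your concern about $F_a^*$ not being of Randers form is not a genuine obstacle, since the bound $(1+a)^{-1}\alpha_K^*\le F_a^*\le (1-a)^{-1}\alpha_K^*$ follows directly from the supremum definition \eqref{dualmetric} and the pointwise estimate $(1-a)\alpha_K\le F_a\le (1+a)\alpha_K$, exactly as you outline.
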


%\begin{remark}
%Due to Theorems \ref{Funkdist} $\&$ \ref{shenspary11}, every Funk metric space satisfies Theorems \ref{reversibinfite} $\&$ \ref{Spositivenoncom}.
%\end{remark}

%
%The following result plays an important role in the next subsection, whose proof will be given in Appendix \ref{funappex}.
%\begin{lemma}\label{FunkIntegral}
%Let $(\Omega,F,\m_{BH})$ be  an $n$-dimensional Funk metric space endowed with the Busemann-Hausdorff measure  and $\phi = \phi(y)$ be the Minkowski norm satisfying  \eqref{phiF}.
%Thus,  for any $f\in C([0,+\infty))$,
%\begin{equation}\label{FunkIntegralEq}
%\int_\Omega f\circ\phi \dm_{BH} = n\omega_n \int_0^1 f(t) t^{n-1} {\dd} t.
%\end{equation}
% \end{lemma}

\subsection{Counterexamples under $\mathbf{Ric}_N \geq -K^2$}\label{Counterexamples}
This subsection demonstrates the necessity of our separate curvature assumptions
\begin{equation}\label{separetedcondition}
\mathbf{Ric}\geq -(n-1) k^2,\quad \mathbf{S} \geq (n-1){h}
\end{equation}
by showing that the main results \textit{fail} if those are replaced by a lower bound on the weighted Ricci curvature $\mathbf{Ric}_N$.

 Recall that for a Finsler metric measure manifold $(M,F,\mathfrak{m})$,  the weighted Ricci curvature $\mathbf{Ric}_N$
combines both $\mathbf{Ric}$ and $\mathbf{S}$, which is a Finslerian version of the Bakry--\'Emery Ricci tensor. The lower bound $\mathbf{Ric}_N \geq K$ is equivalent to the $\mathrm{CD}(K,N)$ condition (see Ohta \cite{Ohtabook,OS,Ohta-pacific}).

More precisely, the
  {\it weighted Ricci curvature} $\mathbf{Ric}_N$  is defined as follows: given $N\in [n,\infty]$, for any unit vector $y\in SM$,
\begin{align}\label{defRicN}
\mathbf{Ric}_N(y)=\left\{
\begin{array}{lll}
\mathbf{Ric}(y)+\left.\frac{\dd}{{\dd}t}\right|_{t=0}\mathbf{S}(\gamma'_y(t))-\frac{\mathbf{S}^2(y)}{N-n}, && \text{ for }N\in (n, \infty),\\
\\
\underset{L\downarrow n}{\lim}\mathbf{Ric}_L(y), && \text{ for }N=n, \\
\\
\mathbf{Ric}(y)+\left.\frac{\dd}{{\dd}t}\right|_{t=0}\mathbf{S}(\gamma'_y(t)),  && \text{ for }N=\infty.
\end{array}
\right.
\end{align}
%Weighted Ricci curvature is the realization of $\CD(K,N)$ condition in the Finsler framework, which is a Finslerian version of the Bakry--\'Emery Ricci tensor.
Although it seems  natural to replace  the separated condition \eqref{separetedcondition} by $\mathbf{Ric}_N\geq -K^2$ for some $K\geq 0$, the following examples demonstrate that Theorems \ref{mainThm1} and \ref{reversibinfite} will fail.

\begin{example}{\rm (Li \cite{Li})}\label{reversminkspaces} There are infinitely many reversible Minkowski norms  on Euclidean spaces. In the sequel, we provide two such classes:
\begin{itemize}
\item [(a)]
Given $m,n \in \mathbb{Z}^+$, then
\[
\phi(y) = \sqrt[2m]{\sum^n_{i=1} (y^i)^{2m}},\quad \forall\,y=(y_1,...,y_n)\in \mathbb R^n,
\]
is a reversible Minkowski norm on $\mathbb{R}^n$.

\item[(b)] Let $|y|$ and $|\tilde{y}|$ be the Euclidean norms on $\mathbb{R}^n$ and $\mathbb{R}^m$, respectively.
Then
\[
\phi(y,\tilde y) = \sqrt{\sqrt{|y|^4 + |\tilde{y}|^4} + |y|^2 + |\tilde{y}|^2},\quad \forall\, (y,\tilde y)\in \mathbb{R}^n \times \mathbb{R}^m,
\]
is a reversible Minkowski norm on $\mathbb{R}^n \times \mathbb{R}^m$.

\end{itemize}
In general, given  a reversible Minkowski norm $\phi$ on the Euclidean space $\mathbb{R}^n$,  the triple $(\mathbb{R}^n,\phi,\mathscr{L}^n)$ is a complete noncompact $\FMMM$ with $\mathbf{Ric}_N\equiv 0$ for all $n\in [N,\infty]$. However, according to \cite{HKZ}, the $L^p$-Hardy inequality,   Heisenberg--Pauli--Weyl uncertainty principle and  Caffarelli--Kohn--Nirenberg interpolation
inequality are valid as well as sharp with the same optimal constants as in the Euclidean case; we also notice that $\mathbf{S}\equiv0$ for such spaces.
\end{example}

\begin{example}[Hilbert/Klein metric]\label{hilberexample}
Let $F = F(x,y)$ be a Funk metric on a bounded strongly convex $\Omega\subset \mathbb{R}^n$, see Definition \ref{funkdef}.
The corresponding {\it Hilbert metric} (or {\it Klein metric}) is given as
\begin{equation}
\tilde{F} (x,y)=  \frac{1}{2} \left\{ F(x,y) + F(x, -y)  \right\}.
\end{equation}
Consider the triplet $(\Omega,\tilde{F},\m_{BH})$, where $\m_{BH}$ is the Busemann--Hausdorff measure  of $F$ (not $\tilde{F}$), which is a constant multiple of Lebesgue measure (see \eqref{Vol_Funk}). Note that $\tilde{F}$ is  reversible, i.e., $\lambda_{\tilde{F}}(\Omega) = 1$.
Then, by \cite{ShenSpray},   $(\Omega,\tilde{F},\m_{BH})$ is an  $n$-dimensional  complete {\it reversible} $\FMMM$  of constant flag curvature $\widetilde{\mathbf{K}} = -1$. Additionally, Ohta \cite{Ohta-pacific} shows that
\begin{equation}\label{weighRicc}
	\widetilde{\mathbf{Ric}}_N \geq - (n - 1) -  \frac{(n+1)^2}{N-n}, \quad \forall\,N\in (n,\infty].
\end{equation}
These properties demonstrate that in Theorem \ref{reversibinfite}, the  curvature conditions \eqref{separetedcondition} cannot be replaced by a lower bound on the weighted Ricci curvature alone.
\end{example}

\begin{remark}\rm
	We can provide a short, alternative proof to \eqref{weighRicc}, by using our previous arguments combined with some well-known facts about Hilbert metrics.
	
	First, we observe that $\widetilde{\mathbf{Ric}} = - (n-1)$, as $\widetilde{\mathbf{K}} = -1$.  Moreover, we claim
	\begin{equation}\label{HilbertSrange}
		-(n+1) \leq  \tilde{\mathbf{S}}\leq  n+1.
	\end{equation}
Indeed, according to Okada \cite{Ok}, every Funk metric satisfies
\begin{equation}\label{FunkEquation}
	\frac{\partial F}{\partial x^k} = F \frac{\partial F}{\partial y^k} .
\end{equation} Let $P=P(x,y)$ denote the projective factor of the Funk metric $F$.
Then \eqref{prodef} and \eqref{FunkEquation} yield
\[
P = \frac{y^k}{2 F}\frac{\partial F}{\partial x^k} = \frac{y^k}{2}\frac{\partial F}{\partial y^k} =\frac{1}{2} F .
\]
Then the projective factor of the Hilbert metric  $\tilde{F}$ is given by
\begin{align*}
	\tilde{P}(x,y) & =  \frac{y^k\tilde{F}_{x^k}(x,y)}{2 \tilde{F}(x,y)} = \frac{  P(x,y) F(x,y) -  P(x, -y) F(x, -y) }{  F(x,y) + F(x, -y) }\\
	& = \frac{ F^2(x,y) - F^2(x, -y) }{ 2\left[ F(x,y) + F(x, -y)\right] } = \frac{1}{2}\left[ F(x,y) - F(x, -y)\right],
\end{align*}
which implies that for every $y\neq 0$, we have
\begin{equation}\label{HilbertPrange}
	\frac{\tilde{P}^2(x,y)}{\tilde{F}^2(x,y)} = \frac{[F(x,y) - F(x, -y)]^2}{[F(x,y) + F(x, -y)]^2}\in [0,1).
\end{equation}
Consider the Busemann--Hausdorff measure $\m_{BH}$ of $F$, whose density function $\sigma(x)$ is a constant, see \eqref{Vol_Funk}.
Since $\tilde{G}^m = \tilde{P} y^m$,   relations \eqref{Scurvature} and \eqref{pohom} yield
\begin{align}\label{HilbertS}
	\tilde{\mathbf{S}}(x,y) &= \frac{\partial \tilde{ G}^m(x,y)}{\partial y^m} - y^m \frac{\partial }{\partial x^m} (\ln \sigma(x))= y^m\frac{\partial \tilde{P}(x,y)}{\partial y^m}+n \tilde{P}(x,y)=({n+1}) \tilde{P}(x,y),
\end{align}
which combined with \eqref{HilbertPrange} yields \eqref{HilbertSrange}.
%Note in addition that $\tilde{\mathbf{S}}$ is not of constant sign; indeed, since $\tilde{P}(x,-y) =- \tilde{P}(x,y)$, the same is valid for $\tilde{\mathbf{S}}$, i.e., $\tilde{\mathbf{S}}(x,-y) = -\tilde{\mathbf{S}}(x,y)$ for every $(x,y)\in T\Omega.$

Finally,   according to Troyanov \cite[p.\ 103]{Troyanov}, we have
\begin{equation}\label{troy}
	\frac{1}{\tilde{F}^2}\left( \tilde{P}^2 - \frac{\partial \tilde{P}}{\partial x^k} y^k \right)=\tilde{\mathbf{K}}  = -1.
\end{equation}
Then for any $y\in \widetilde{S\Omega}:=\{v\in T\Omega\,:\, \tilde{F}(v)=1 \}$, by \eqref{geodesequ}, \eqref{HilbertS}, \eqref{pohom} and \eqref{troy}, we obtain
\begin{align} \label{HilbertSde}
	\left.\frac{\dd}{{\dd}t}\right|_{t=0}\tilde{\mathbf{S}}({\gamma}'_y(t))&
	=  \frac{\partial \tilde{\mathbf{S}}}{\partial x^k} y^k -2 \frac{\partial \tilde{\mathbf{S}}}{\partial y^m}  \tilde G^m(y)= (n+1) \left( \frac{\partial \tilde{P}}{\partial x^k} y^k - 2 \frac{\partial \tilde{{P}}}{\partial y^m}  \tilde G^m(y) \right) \notag\\
	&= (n+1) \left( \frac{\partial \tilde{P}}{\partial x^k} y^k - 2 \tilde P \frac{\partial \tilde{{P}}}{\partial y^m}  y^m \right) \notag\\
	& = (n+1) \left( \frac{\partial \tilde{P}}{\partial x^k} y^k - 2 \tilde{P}^2 \right)= (n+1) (\tilde{F}^2 - \tilde{P}^2).
\end{align}
%\begin{align} \label{HilbertSde}
%	\left.\frac{\dd}{{\dd}t}\right|_{t=0}\tilde{\mathbf{S}}(\dot{\gamma}_y(t))&=\tilde{\mathbf{S}}_{| k} y^k
%	=  \frac{\partial \tilde{\mathbf{S}}}{\partial x^k} y^k - \frac{\partial \tilde{\mathbf{S}}}{\partial y^m} \frac{ \partial G^m}{\partial y^k} y^k\notag\\
%	& = (n+1) \left( \frac{\partial \tilde{P}}{\partial x^k} y^k - 2 \tilde{P}^2 \right)= (n+1) (\tilde{F}^2 - \tilde{P}^2).
%\end{align}
Therefore, it follows by \eqref{defRicN},  \eqref{HilbertS} and \eqref{HilbertSde} that for any unit vector $y\in \widetilde{S\Omega}$,
\begin{align*}\label{HilbertRicN}
	\widetilde{\mathbf{Ric}}_N(y)=\left\{
	\begin{array}{lll}
		2  - (n+1)\tilde{P}^2 -  \frac{(n+1)^2}{N-n} \tilde{P}^2, && \text{ for }N\in (n,\infty),\\
		\\
		2 - (n+1)\tilde{P}^2,  && \text{ for }N=\infty,
	\end{array}
	\right.
\end{align*}
which combined with relation \eqref{HilbertPrange} implies the required estimate \eqref{weighRicc}.

\end{remark}

\end{document}